\documentclass[12pt,a4]{amsart}
\usepackage{amsfonts}
\usepackage{amssymb}
\usepackage{amsmath}
\usepackage{amsthm}
\usepackage{fullpage}
\theoremstyle{plain}
\newtheorem{thm}{Theorem}[section]
\newtheorem{lem}[thm]{Lemma}
\theoremstyle{definition}
\newtheorem{defn}[thm]{Definition}
\newtheorem{ex}[thm]{Example}
\newtheorem{rem}[thm]{Remark}

\numberwithin{equation}{section}
\newcommand{\R}{\mathbb{R}}

\begin{document}

\title[Perturbed Hammerstein integral equations]{Solutions of perturbed Hammerstein integral equations with applications}
\author[F. Cianciaruso]{Filomena Cianciaruso}
\address{Filomena Cianciaruso, Dipartimento di Matematica ed Informatica, Universit\`{a}
della Calabria, 87036 Arcavacata di Rende, Cosenza, Italy}
\email{cianciaruso@unical.it}
\author[G. Infante]{Gennaro Infante}
\address{Gennaro Infante, Dipartimento di Matematica ed Informatica, Universit\`{a} della
Calabria, 87036 Arcavacata di Rende, Cosenza, Italy}
\email{gennaro.infante@unical.it}
\author[P. Pietramala]{Paolamaria Pietramala}%
\address{Paolamaria Pietramala, Dipartimento di Matematica ed Informatica, Universit\`{a}
della Calabria, 87036 Arcavacata di Rende, Cosenza, Italy}
\email{pietramala@unical.it}
\subjclass[2010]{Primary 45G15, secondary 34B10, 34B18, 47H30}
\keywords{Fixed point index, cone, nontrivial solution, nonlinear boundary conditions.}
\begin{abstract}
By means of topological methods, we provide new results on the existence, non-existence, localization and multiplicity of nontrivial solutions for systems of perturbed Hammerstein integral equations. In order to illustrate our theoretical results, we study some problems that occur in applied mathematics, namely models of chemical reactors, beams and thermostats. We also apply our theory in order to prove the existence of nontrivial radial solutions of systems of elliptic boundary value problems subject to nonlocal, nonlinear boundary conditions.
\end{abstract}
\maketitle
\section{Introduction}
Problems with \emph{nonlinear} boundary conditions often occur in applied mathematics. For example, the fourth-order differential equation
\begin{equation}\label{intro-eq1}
u^{(4)}(t)=f(t, u(t)),\ t \in (0,1),%
\end{equation}
subject to the nonlinear boundary conditions (BCs)
\begin{equation}\label{intro-bc1}
u(0)= u'(0)=u^{\prime \prime}(1)=0,\  u^{\prime \prime \prime}(1)=h(u(1))
\end{equation}
models the stationary states of the deflections of an elastic beam
of length $1$. The BCs~\eqref{intro-bc1} describe that the left
end of the beam is clamped and the right end is free to move with
a vanishing bending moment and a shearing force that reacts (in a
possibly nonlinear manner) according to the displacement
registered in the right end. Various methods were used to deal
with the existence of solutions of the boundary value problem
(BVP) ~\eqref{intro-eq1}-\eqref{intro-bc1}, for example variational
methods in \cite{cate, song}, iterative methods in \cite{amp,
lizhai, mada} and topological methods in \cite{cantilever}.

One possibility is to rewrite this BVP as a \emph{perturbed} Hammerstein integral equation, that is
\begin{equation}\label{cic}
u(t)={\gamma}(t)h(u(1)) +\int_0^1 k(t,s)f(s,u(s))\,ds.
\end{equation}
This kind of perturbed integral equation  has been investigated in the past by a number of authors, we refer the reader to the manuscripts~\cite{amp, Cabada1, ac-gi-at-fpt, dfdorjp, Goodrich3, Goodrich5, Goodrich6, Goodrich7, gi-caa, kejde, paola, ya1, ya2} and references therein.

When seeking the existence of \emph{positive} solutions of the perturbed integral equation~\eqref{cic}, typically one assumes either a \emph{global} restriction on the growth of the nonlinearity $h$, say for example
\begin{equation}\label{globes}
\alpha_1 x \leq h(x) \leq \alpha_2 x,\ \text{for every}\ x\geq 0,
\end{equation}
where $0\leq \alpha_1 \leq \alpha_2$, as in~\cite{gi-caa, cantilever, gipp-nonlin,
gipp-mmas, kejde, kongwang, wang}, or an \emph{asymptotic} condition, as in~\cite{dunn-wang, Goodrich2,
Goodrich3, Goodrich5, good, Goodrich6, good1, Goodrich7}, or a kind of \emph{mixture} of the two, as in~\cite{ya1,ya2}.

Our idea is to utilise a kind of \emph{local} estimate on the
growth of the nonlinearity $h$, that can be seen as a weakening of
the global assumption~\eqref{globes}. This approach is useful
under two points of view: it allows to handle a wider class of
nonlinearities with respect to the assumption~\eqref{globes} and
is convenient in order to prove multiplicity results, henceforth
improving and complementing the above
results.

We stress that we can deal with \emph{nonlocal} BCs; for example we can replace the BCs~\eqref{intro-bc1} with
\begin{equation*}
u(0)= u'(0)=u^{\prime \prime}(1)=0,\  u^{\prime \prime \prime}(1)=h(u(\eta)),
\end{equation*}
where $\eta \in (0,1)$. This models a feedback mechanism where the
shearing force in the right end of the beam reacts to the
displacement registered in a point $\eta$.  As far as we know the
study of nonlocal BCs, in the context of ODEs, can be traced back
to Picone~\cite{Picone}, who considered multi-point BCs. For an
introduction to nonlocal problems we refer the reader to the
reviews~\cite{Conti, rma, sotiris, Stik,Whyburn} and the
papers~\cite{kttmna, ktejde, jw-gi-jlms}.

In Section~\ref{secham} we discuss the existence of solutions of
the more general equation
\begin{equation}\label{eqtham-intro}
u(t)={\gamma}(t)H[u] +\int_0^1 k(t,s)g(s)f(s,u(s))\,ds,
\end{equation}
where $H$ is a suitable compact functional in the space of continuous functions. We investigate the
existence of \emph{strictly positive}, \emph{non-negative} and
\emph{nontrivial} solutions of~\eqref{eqtham-intro}, depending on
the \emph{sign properties} of the kernel~$k$. This kind of
equation is fairly general and can be applied to a variety of
problems. As an \emph{example} we apply our results in the case of
three mathematical models, widely studied in literature, namely a
chemical reactor, a cantilever beam and a thermostat model. We also present \emph{non-existence} results
for~\eqref{eqtham-intro}. In order to illustrate our approach to
the reader, in Section~\ref{secham} we restrict our attention to
the case of \emph{one} compact perturbation of the Hammerstein
integral equation.

In Section~\ref{systems} we further develop the methodology of the previous Section and we deal with the case of systems of two perturbed Hammerstein equations. Here we focus, for brevity, to the problem of the existence of multiple, \emph{nontrivial} solutions of the system
\begin{gather}
\begin{aligned}\label{syst-intro}
u(t)=\sum_{j=1,2}\gamma_{1j}(t)H_{1j}[u,v]+\int_{0}^{1}k_1(t,s)g_1(s)f_1(s,u(s),v(s))\,ds, \\
v(t)= \sum_{j=1,2}\gamma_{2j}(t)H_{2j}[u,v]+\int_{0}^{1}k_2(t,s)g_2(s)f_2(s,u(s),v(s))\,ds,%
\end{aligned}
\end{gather}
where $H_{ij}$ are compact functionals. Some non-existence results for~\eqref{syst-intro} are also presented. Our approach allows us to deal with a wide class of systems of differential equations subject to \emph{nonlinear} nonlocal BCs. As an \emph{example} we illustrate the applicability of the theoretical results of Section~\ref{systems} by discussing the existence of nonzero radial solutions of the system of nonlinear elliptic equations subject to nonlocal, nonlinear BCs
\begin{gather}
\begin{align*}
\Delta u + \tilde{g}_1(|x|) f_1(u,v)=0,\ |&x|\in [R_1,R_0], \\
\Delta v + \tilde{g}_2(|x|) f_2(u,v)=0,\ |&x|\in [R_1,R_0],\\
\frac{\partial u}{\partial r}\Bigr\rvert_{\partial
B_{R_0}}=H_{11}[u,v]\ \text{and}\
(u(R_1 x)-\beta_1  &u(R_\eta x))\Big|_{x\in\partial B_{1}}=H_{12}[u,v],\\
\frac{\partial v}{\partial r}\Bigr\rvert_{\partial
B_{R_0}}=H_{21}[u,v] \ \text{and}\
\bigl(v(R_1 x)-\beta_2 & \frac{\partial v}{\partial r}(R_\xi x)\bigr)\Big|_{x\in\partial B_{1}}=H_{22}[u,v],%
\end{align*}
\end{gather}
where $x\in \mathbb{R}^n $, $\beta_1,\,\beta_2<0$,
$0<R_1<R_0<+\infty$, $R_\eta, R_\xi \in (R_1,R_0)$.

For our results we utilize the theory of fixed point index and make use of ideas from the earlier papers~\cite{df-gi-do, gifmpp-cnsns, gipp-nonlin, gi-pp-ft, gijwjiea, lan-lin-na, lanwebb, webb, jw-gi-jlms}.

\section{Existence and non-existence results for perturbed Hammerstein integral equations}\label{secham}
In this Section we  study the existence of solutions of the perturbed Hammerstein equation of the type
\begin{equation}\label{eqthamm}
u(t)={\gamma}(t)H[u] +\int_0^1 k(t,s)g(s)f(s,u(s))\,ds:=Tu(t),
\end{equation}
where $H$ is a  compact functional. We consider $T$ as a perturbation of the  operator
\begin{equation*}
Fu(t):=\int_0^1 k(t,s)g(s)f(s,u(s))\,ds,
\end{equation*}
that is
\begin{equation*}
Tu(t)={\gamma}(t)H[u]+Fu(t).
\end{equation*}

We work in suitable cones of  the space of continuous functions $C[0,1]$, endowed with the usual supremum norm
 $\|w\|:=\max\{|w(t)|,\; t\; \in [0,1]\}$. We recall that a \emph{cone} $K$ in a Banach space $X$  is a closed
convex set such that $\lambda \, x\in K$ for $x \in K$ and
$\lambda\geq 0$ and $K\cap (-K)=\{0\}$. If $\Omega$ is a open bounded subset of a cone $K$ (in the relative
topology) we denote by $\overline{\Omega}$ and $\partial \Omega$
the closure and the boundary relative to $K$. When $\Omega$ is an open
bounded subset of $X$ we write $\Omega_K=\Omega \cap K$, an open subset of
$K$.

The next Lemma summarizes some classical results regarding the fixed point index, for more details see for example the review by Amann~\cite{Amann-rev} and the book by Guo and Lakshmikantham~\cite{guolak}.
\begin{lem}
Let $\Omega$ be an open bounded set with $0\in \Omega_{K}$ and $\overline{\Omega}_{K}\ne K$. Assume that $F:\overline{\Omega}_{K}\to K$ is
a compact map such that $x\neq Fx$ for all $x\in \partial \Omega_{K}$. Then the fixed point index $i_{K}(F, \Omega_{K})$ has the following properties.
\begin{itemize}
\item[(1)] If there exists $e\in K\setminus \{0\}$ such that $x\neq Fx+\lambda e$ for all $x\in \partial \Omega_K$ and all $\lambda
>0$, then $i_{K}(F, \Omega_{K})=0$.
\item[(2)] If  $\mu x \neq Fx$ for all $x\in \partial \Omega_K$ and for every $\mu \geq 1$, then $i_{K}(F, \Omega_{K})=1$.
\item[(3)] If $i_K(F,\Omega_K)\ne0$, then $F$ has a fixed point in $\Omega_K$.
\item[(4)] Let $\Omega^{1}$ be open and bounded in $X$ with $\overline{\Omega^{1}_K}\subset \Omega_K$. If $i_{K}(F, \Omega_{K})=1$ and
$i_{K}(F, \Omega_{K}^{1})=0$, then $F$ has a fixed point in $\Omega_{K}\setminus \overline{\Omega_{K}^{1}}$. The same result holds if
$i_{K}(F, \Omega_{K})=0$ and $i_{K}(F, \Omega_{K}^{1})=1$.
\end{itemize}
\end{lem}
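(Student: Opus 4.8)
The plan is to deduce all four assertions from the standard axioms of the fixed point index for compact maps on a cone --- normalization, homotopy invariance, additivity and the solution property --- exactly as developed in the sources the authors cite (Amann's review and the monograph of Guo and Lakshmikantham). Throughout I would use two structural facts: since $K$ is a cone, $Fx+\lambda e\in K$ whenever $x\in\overline{\Omega}_K$, $e\in K$ and $\lambda\ge0$; and since $\Omega$ is bounded and $F$ is compact, both $\overline{\Omega}_K$ and $F(\overline{\Omega}_K)$ are bounded. Assertion (3) then requires no work at all, being precisely the solution property of the index.

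To prove (2) I would use the straight-line homotopy $H(t,x)=t\,Fx$, $t\in[0,1]$, which is a compact map of $[0,1]\times\overline{\Omega}_K$ into $K$. The admissibility condition $x\ne H(t,x)$ on $\partial\Omega_K$ is checked in two ranges: at $t=0$ it holds because $0\in\Omega_K$ is an interior point and hence $0\notin\partial\Omega_K$; for $t\in(0,1]$ a fixed point $x=t\,Fx$ on the boundary would yield $Fx=\mu x$ with $\mu=1/t\ge1$, contradicting the hypothesis of (2). Homotopy invariance gives $i_K(F,\Omega_K)=i_K(0,\Omega_K)$, and since the constant map $0$ takes its value in $\Omega_K$, normalization yields $i_K(0,\Omega_K)=1$.

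For (1) the idea is to deform $F$ to a fixed-point-free map by pushing along the admissible direction $e$. Using that $F(\overline{\Omega}_K)$ is bounded and $e\ne0$, I would select $\sigma>0$ so large that $\|Fx+\sigma e\|>\sup_{y\in\overline{\Omega}_K}\|y\|$ for every $x\in\overline{\Omega}_K$; then $F+\sigma e$ has no fixed point in $\overline{\Omega}_K$, so its index is $0$ by the solution property. The homotopy $H(t,x)=Fx+t\sigma e$ stays in $K$ and is admissible on $\partial\Omega_K$: the case $x=Fx$ is ruled out by the standing hypothesis of the Lemma, and a coincidence $x=Fx+t\sigma e$ with $t\in(0,1]$ is ruled out by the hypothesis of (1) with $\lambda=t\sigma>0$. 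Homotopy invariance then forces $i_K(F,\Omega_K)=i_K(F+\sigma e,\Omega_K)=0$.

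Finally, for (4) I would invoke additivity. Since $i_K(F,\Omega_K)$ and $i_K(F,\Omega^1_K)$ are assumed to be defined, $F$ has no fixed point on $\partial\Omega_K$ or on $\partial\Omega^1_K$, and these are precisely the points of $\overline{\Omega}_K$ lying outside the disjoint open sets $\Omega^1_K$ and $\Omega_K\setminus\overline{\Omega^1_K}$; additivity therefore gives $i_K(F,\Omega_K)=i_K(F,\Omega^1_K)+i_K(F,\Omega_K\setminus\overline{\Omega^1_K})$. In the first case this yields $i_K(F,\Omega_K\setminus\overline{\Omega^1_K})=1\ne0$ and in the second case $-1\ne0$, so in either case the solution property produces a fixed point of $F$ in $\Omega_K\setminus\overline{\Omega^1_K}$. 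The only delicate point in the whole argument is the a priori choice of $\sigma$ in (1), where compactness of $F$ is genuinely needed to bound $F(\overline{\Omega}_K)$ before pushing out; the real substance --- that an index enjoying these four properties exists for compact maps on cones --- is not reproved here but taken from the cited literature.
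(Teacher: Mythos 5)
Your proposal is correct. Note, however, that the paper offers no proof of this Lemma at all: it is stated explicitly as a summary of classical results, with the reader referred to Amann's review and the book of Guo and Lakshmikantham. Your derivation of the four assertions from the standard axioms of the index (normalization via the homotopy $t\,Fx$ for (2), translation by a large multiple of $e$ combined with the solution property for (1), additivity for (4)) is precisely the argument found in those references, and all the delicate points --- using $0\in\Omega_K$ to make the homotopy in (2) admissible at $t=0$, using compactness of $F$ and $e\neq 0$ to choose $\sigma$ in (1), and checking that the indices over $\Omega_K^1$ and $\Omega_K\setminus\overline{\Omega_K^1}$ are both defined before invoking additivity in (4) --- are handled correctly.
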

In what follows, with an abuse of notation, we denote by $w$ the
constant function $w(t)=w$ for all $t \in [0,1]$. We now discuss the relation between the existence of strictly positive, non-negative and nontrivial solutions of \eqref{eqthamm} and the sign properties of the kernel $k$, in the line of the papers \cite{ac-gi-at-bvp, gijwjiea}.
 \subsection{Strongly positive kernels} We begin by considering the case of kernels with a strong positivity and we make the following assumptions on the terms that occur in~\eqref{eqthamm}.
\begin{itemize}
\item  $k:[0,1] \times[0,1]\rightarrow (0,+\infty)$ is measurable and for every $\tau\in [0,1] $ we have
\begin{equation*}
\lim_{t \to \tau} |k(t,s)-k(\tau,s)|=0 \;\text{    for almost every (a.e.)  } s \in [0,1] .
\end{equation*}{}
\item There exist a function $\Phi \in L^{\infty}[0,1]$ and a constant $c_{1} \in (0,1]$ such that
$$
c_{1}\Phi(s) \leq  k(t,s)\leq \Phi(s) \text{ for } t \in [0,1] \text{ and a.e. } \, s\in [0,1].
$$
\item  $g\,\Phi \in L^1[0,1] $, $g(s) \geq 0$ for a.e. $s\in [0,1]$ and $\int_0^1 \Phi(s)g(s)\,ds >0$.{}
\item   The nonlinearity $f:[0,1]\times [0,+\infty) \to [0,+\infty)$ satisfies Carath\'{e}odory conditions, that is, $f(\cdot,u)$ is measurable for each fixed $u\in [0,+\infty)$ , $f(t,\cdot)$ is continuous for a.e. $t\in [0,1]$, and for each $r>0$, there exists $\phi_{r} \in L^{\infty}[0,1] $ such that{}
\begin{equation*}
f(t,u)\le \phi_{r}(t) \;\text{ for all } \; u\in [0,r]\;\text{ and a.e. } \; t\in [0,1] .
\end{equation*}{}
\item $\gamma \in C [0,1] $ and there exists $c_{2} \in(0,1] \;\text{such that}\; \gamma(t) \geq c_{2}\|\gamma\| \;\text{for}\; t \in [0,1]$.
\end{itemize}
Due to the hypotheses above, we are able to work in the cone
$$
K:=\{u\in C[0,1]:\ \min_{t\in [0,1]}u(t)\ge c\|u\|\},
$$
with $c=\min\{c_1,c_2\}$.
Regarding the nonlinear functional $H$, we assume that
\begin{itemize}
\item$H: K\to [0,+\infty)$ is  compact.
\end{itemize}
 If the hypotheses above hold  then $T$  maps $K$ into $K$ and  is compact.\\
For our index calculations we use the following open bounded
sets (relative to $K$)
$$
K_{\rho}:=\{u\in K: \|u\|<\rho\},\quad V_{\rho}:=\{u \in K:
\displaystyle{\min_{t\in [0,1]}}u(t)<\rho\}.
$$
Note that the sets $K_{\rho}$ and $V_{\rho}$ are nested, i.e. $K_{\rho}\subset V_{\rho}\subset K_{\rho/c}$.

Firstly, we give a  condition which implies that the index is $1$ on the set $K_{\rho}$.
\begin{lem} \label{ind11}
Assume that
\begin{enumerate}
\item[$(\mathrm{I}^{1}_{\rho})$] there exist
 $\rho>0$, a linear functional $\alpha^{\rho}[\cdot]:K\rightarrow [0,+\infty)$ given by
$$
\alpha^{\rho}[u]=\int_0^1 u(t)\,dA^{\rho}(t)
$$
such that
\begin{itemize}
\item $dA^{\rho}$ is a positive Stieltjes measure,
\item $\alpha^{\rho}[\gamma]<1$,
\item $H[u]\leq \alpha^{\rho}[u]$ for every $u\in \partial K_{
\rho}$,
\item the following inequality holds:
\begin{equation}\label{indice12}
  f^{c\rho,\rho}\Bigl(\sup_{t \in [0,1] }\Bigl\{\frac{\gamma(t)}{1-\alpha^{\rho}[\gamma]}\int_0^1\mathcal{K}^{\rho}(s)g(s)\,ds+\int_0^1k(t,s)g(s)\,ds\Bigr\} \Bigr)< 1,
\end{equation}
where
\begin{equation*}
 f^{c\rho,\rho}:=\mathrm{ess }\sup\left\{\frac{f(t,u)}{\rho},\,\,0\le t\le 1,\,c\rho\le
u \le \rho\right\}
\,\text{  and    } \,\mathcal{K}^{\rho}(s):=\int_0^1 k(t,s)\,dA^{\rho}(t).
\end{equation*}
\end{itemize}
\end{enumerate}
Then $i_{K}(T,K_{\rho})$ is $1$.
\end{lem}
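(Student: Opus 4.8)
To show the fixed point index $i_K(T, K_\rho) = 1$, the natural tool is property (2) of the index Lemma: I would verify that $\mu u \neq Tu$ for all $u \in \partial K_\rho$ and every $\mu \geq 1$. Recall that $\partial K_\rho = \{u \in K : \|u\| = \rho\}$, so for $u$ on this boundary the cone condition gives $c\rho \leq c\|u\| \leq u(t) \leq \|u\| = \rho$ for all $t$; hence the values $f(t, u(t))$ are controlled by the truncated supremum $f^{c\rho,\rho}$, which is precisely why that quantity appears in hypothesis $(\mathrm{I}^1_\rho)$.

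**Main steps.** First I would argue by contradiction: suppose there exist $u \in \partial K_\rho$ and $\mu \geq 1$ with $\mu u = Tu$, i.e.
$$
\mu u(t) = \gamma(t) H[u] + \int_0^1 k(t,s) g(s) f(s, u(s))\, ds.
$$
The key is to extract a usable bound on the perturbation term $\gamma(t) H[u]$. I would apply the linear functional $\alpha^\rho[\cdot]$ to both sides. Since $\mu u = Tu$, linearity gives $\mu\,\alpha^\rho[u] = H[u]\,\alpha^\rho[\gamma] + \alpha^\rho[Fu]$, where $Fu(t) = \int_0^1 k(t,s)g(s)f(s,u(s))\,ds$. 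Using the third bullet of $(\mathrm{I}^1_\rho)$, namely $H[u] \leq \alpha^\rho[u]$, together with $\mu \geq 1$ and positivity of the Stieltjes measure, I would deduce
$$
H[u] \leq \alpha^\rho[u] \leq \mu\,\alpha^\rho[u] = H[u]\,\alpha^\rho[\gamma] + \alpha^\rho[Fu],
$$
which rearranges (using $\alpha^\rho[\gamma] < 1$) to
$$
H[u] \leq \frac{\alpha^\rho[Fu]}{1 - \alpha^\rho[\gamma]} = \frac{1}{1-\alpha^\rho[\gamma]}\int_0^1 \mathcal{K}^\rho(s) g(s) f(s,u(s))\,ds,
$$
where I have interchanged the order of integration to introduce $\mathcal{K}^\rho(s) = \int_0^1 k(t,s)\,dA^\rho(t)$. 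This is the crucial estimate linking the functional $H$ to the kernel quantity $\mathcal{K}^\rho$.

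**Closing the argument.** Substituting this bound back into $\mu u(t) = \gamma(t)H[u] + Fu(t)$ and using $k(t,s) \leq \Phi(s)$ along with $f(s,u(s)) \leq \rho\, f^{c\rho,\rho}$ on $\partial K_\rho$, I would bound
$$
\mu u(t) \leq \rho\, f^{c\rho,\rho}\Bigl(\frac{\gamma(t)}{1-\alpha^\rho[\gamma]}\int_0^1 \mathcal{K}^\rho(s)g(s)\,ds + \int_0^1 k(t,s)g(s)\,ds\Bigr).
$$
Taking the supremum over $t \in [0,1]$ and using $\mu \geq 1$, $\|u\| = \rho$, the right-hand side is strictly less than $\rho$ by inequality~\eqref{indice12}, yielding $\rho = \mu\|u\| \leq \mu\,\|u\| < \rho$, a contradiction. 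Hence no such fixed point relation holds on $\partial K_\rho$, and property (2) gives $i_K(T, K_\rho) = 1$.

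**Anticipated obstacle.** The routine parts are the Carathéodory bounds and the Fubini interchange; the step requiring genuine care is justifying the inequality $\alpha^\rho[u] \leq \mu\,\alpha^\rho[u]$ and the rearrangement that isolates $H[u]$, since these rest on $\alpha^\rho$ being a \emph{positive} functional (so $\alpha^\rho[u] \geq 0$) and on $\alpha^\rho[\gamma] < 1$ keeping the denominator positive. I would also need to confirm at the outset that $T$ maps $\partial K_\rho$ sensibly and that $H[u] \geq 0$, so that the perturbation term never works against the estimate; these follow from the standing hypotheses that $H : K \to [0,+\infty)$ and that $\gamma, k, g, f$ are nonnegative.
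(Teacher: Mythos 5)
Your proposal is correct and follows essentially the same route as the paper's proof: argue by contradiction from $\mu u = Tu$ on $\partial K_\rho$, apply the functional $\alpha^{\rho}[\cdot]$ to isolate the perturbation term via $H[u]\le\alpha^{\rho}[u]$ and $\alpha^{\rho}[\gamma]<1$, substitute back, and take the supremum to contradict \eqref{indice12}. The only cosmetic differences are that you bound $H[u]$ directly where the paper bounds $\alpha^{\rho}[u]$, and your final chain should read $\mu\rho\le\sup_t \mu u(t)<\rho$ rather than $\rho=\mu\|u\|$; neither affects the validity of the argument.
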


\begin{proof}
We show that $\mu\, u\neq Tu$ for every $u\in \partial K_{\rho}$ and for every $\mu \geq 1$; this ensures that the index is 1 on $K_{\rho}$. In fact, if this does
not happen, there exist $\mu \geq 1$ and $u\in \partial K_{\rho}$ such that $\mu\, u(t)= Tu(t)$, for every $t \in [0,1]$.
Then we have
\begin{equation}\label{rel}
\mu\, u(t)\le \gamma(t)\alpha^{\rho}[u]+\int_{0}^{1} k(t,s)g(s)f(s,u(s))ds.
 \end{equation}
Applying $\alpha^{\rho}$ to the both sides of \eqref{rel} gives
\begin{equation*}
\mu\, \alpha^{\rho}[u]\le \alpha^{\rho}[\gamma]\alpha^{\rho}[u]+\int_0
^1\mathcal{K}^{\rho}(s)g(s)f(s,u(s))ds.
 \end{equation*}
 Thus we have
\begin{equation} \label{alpha}
\alpha^{\rho}[u]\le
\frac{1}{\mu-\alpha^{\rho}[\gamma]}\int_0
^1\mathcal{K}^{\rho}(s)g(s)f(s,u(s))ds\le\frac{1}{1-\alpha^{\rho}[\gamma]}\int_0
^1\mathcal{K}^{\rho}(s)g(s)f(s,u(s))ds.
 \end{equation}
 Using \eqref{alpha} in \eqref{rel} we obtain
$$
\mu\, u(t)\le\rho f^{c\rho,\rho}\left(\frac{\gamma(t)}{1-\alpha^{\rho}[\gamma]}\int_0^1\mathcal{K}^{\rho}(s)g(s)ds+\int_{0}^{1} k(t,s)g(s)ds\right).
$$
 Taking the supremum in $[0,1]$ gives
\begin{align*}
\mu\rho&\le
\rho f^{c\rho,\rho}\left(\sup_{t \in[0,1]}\left\{\frac{\gamma(t)}{1-\alpha^{\rho}[\gamma]}\int_0
^1\mathcal{K}^{\rho}(s)g(s)ds+\int_{0}^{1}
k(t,s)g(s)ds\right\}\right)
\end{align*}
and using the hypothesis \eqref{indice12} we can conclude that
$\mu \rho <\rho$. This contradicts the fact that $\mu \geq 1$ and proves the result.
\end{proof}

Now we give a  condition which implies that the index is $0$ on the set $V_{\rho}$.
\begin{lem} \label{indice0*}
Assume that
\begin{enumerate}
\item[$(\mathrm{I}^{0}_{\rho})$] there exist
 $\rho>0$, a linear functional $\alpha^{\rho}[\cdot]:K\rightarrow [0,+\infty)$ given by
 $$\alpha^{\rho}[u]=\int_0^1 u(t)\,dA^{\rho}(t)$$
such that
\begin{itemize}
\item $dA^{\rho}$ is a positive Stieltjes measure,
\item $\alpha^{\rho}[\gamma]<1$,
\item $H[u]\geq \alpha^{\rho}[u]$ for every $u\in \partial V_{
\rho}$,
\item the following inequality holds:
\begin{equation}\label{ind 0 2 +}
  f_{\rho,\rho/c}\Bigl(\inf_{t \in [0,1] }\Bigl\{\frac{\gamma(t)}{1-\alpha^{\rho}[\gamma]}\int_0^1\mathcal{K}^{\rho}(s)g(s)\,ds+\int_0^1 k(t,s) g(s)\,ds\Bigr\} \Bigr)> 1,
\end{equation}
where
\begin{equation*}
f_{\rho, \rho/c}:=\mathrm{ess }\inf\left\{\frac{f(t,u)}{\rho},\,
0\le t\le 1,\,\rho\le u \le \rho/c\right\}.
\end{equation*}
\end{itemize}
\end{enumerate}
Then $i_{K}(T,V_{\rho})$ is $0$.
\end{lem}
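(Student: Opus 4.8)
The plan is to invoke property~(1) of the index lemma stated above: it suffices to exhibit an $e\in K\setminus\{0\}$ such that $u\neq Tu+\lambda e$ for every $u\in\partial V_{\rho}$ and every $\lambda>0$, and then $i_{K}(T,V_{\rho})=0$ follows at once. I would take $e$ to be the constant function $1$, which lies in $K$ since $\min_{t}1=1\geq c=c\|1\|$. Arguing by contradiction, I suppose there are $u\in\partial V_{\rho}$ and $\lambda>0$ with $u(t)=\gamma(t)H[u]+\int_{0}^{1}k(t,s)g(s)f(s,u(s))\,ds+\lambda$ for all $t$. Discarding the nonnegative term $\lambda$ and using $H[u]\geq\alpha^{\rho}[u]$, which holds on $\partial V_{\rho}$ by $(\mathrm{I}^{0}_{\rho})$, together with $\gamma(t)\geq 0$, yields the pointwise lower bound
\[
u(t)\geq\gamma(t)\alpha^{\rho}[u]+\int_{0}^{1}k(t,s)g(s)f(s,u(s))\,ds.
\]

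The next step mirrors the computation in Lemma~\ref{ind11}, but with every inequality reversed. First I would record two geometric facts about $\partial V_{\rho}$: since $u\mapsto\min_{t}u(t)$ is continuous, a boundary point satisfies $\min_{t}u(t)=\rho$, and the cone inequality $\min_{t}u(t)\geq c\|u\|$ then forces $\rho\leq u(s)\leq\|u\|\leq\rho/c$ for every $s$. Applying the positive functional $\alpha^{\rho}$ to both sides of the displayed inequality and interchanging the order of integration (Tonelli, justified by nonnegativity and integrability) gives $\alpha^{\rho}[u]\geq\alpha^{\rho}[\gamma]\,\alpha^{\rho}[u]+\int_{0}^{1}\mathcal{K}^{\rho}(s)g(s)f(s,u(s))\,ds$; since $\alpha^{\rho}[\gamma]<1$ this rearranges to
\[
\alpha^{\rho}[u]\geq\frac{1}{1-\alpha^{\rho}[\gamma]}\int_{0}^{1}\mathcal{K}^{\rho}(s)g(s)f(s,u(s))\,ds.
\]

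Substituting this back into the pointwise bound and using $f(s,u(s))\geq\rho\,f_{\rho,\rho/c}$, which holds a.e.\ precisely because $\rho\leq u(s)\leq\rho/c$ on $\partial V_{\rho}$, I would obtain
\[
u(t)\geq\rho\,f_{\rho,\rho/c}\Bigl(\frac{\gamma(t)}{1-\alpha^{\rho}[\gamma]}\int_{0}^{1}\mathcal{K}^{\rho}(s)g(s)\,ds+\int_{0}^{1}k(t,s)g(s)\,ds\Bigr).
\]
Taking the infimum over $t\in[0,1]$, the left-hand side becomes $\min_{t}u(t)=\rho$, while hypothesis~\eqref{ind 0 2 +} makes the right-hand side strictly larger than $\rho$, producing the contradiction $\rho>\rho$. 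This rules out the assumed equation and closes the argument.

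The computation is essentially routine once set up, being the mirror image of Lemma~\ref{ind11}; I expect the only delicate points to be bookkeeping ones: checking that $e=1$ genuinely belongs to $K$, identifying $\partial V_{\rho}$ with the level set $\{\min_{t}u=\rho\}$ and deriving the accompanying two-sided estimate $\rho\leq u\leq\rho/c$ (which is what selects $f_{\rho,\rho/c}$ rather than any other essential sup or inf), and justifying the Tonelli interchange that produces the kernel $\mathcal{K}^{\rho}$. The main thing to watch is keeping all the inequalities pointing in the correct direction throughout the chain.
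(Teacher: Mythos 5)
Your argument is correct and is essentially the paper's own proof: both exhibit $e\equiv 1\in K\setminus\{0\}$, suppose $u=Tu+\lambda e$ on $\partial V_{\rho}$, apply $\alpha^{\rho}$ to get the lower bound on $\alpha^{\rho}[u]$, substitute back, and take the infimum over $[0,1]$ to reach the contradiction $\rho>\rho$ via hypothesis~\eqref{ind 0 2 +}. The only cosmetic difference is that the paper runs the contradiction for all $\lambda\geq 0$ (which also covers the nondegeneracy $u\neq Tu$ needed for the index to be defined), whereas you state $\lambda>0$; your argument, which simply discards the nonnegative term $\lambda$, in fact covers $\lambda=0$ as well.
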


\begin{proof}
Note that the constant function $e(t)\equiv
1$  for $t\in[0,1]$ belongs to $K$. We prove that $u\not=Tu+\lambda e$ for every $u\in \partial V_{\rho}$ and for every $\lambda \geq 0$; this ensures that the index is $0$ on $V_{\rho}$.

Let $u\in \partial V_{\rho}$ and $\lambda \geq 0$ such that
$u=Tu+\lambda\,e$. Then we have, for $t\in[0,1]$,
\begin{align}\label{Eq0}
   u(t) \geq & \gamma(t)\alpha^{\rho}[u]+\int_{0}^{1}k(t,s)g(s)f(s,u(s))ds.
  \end{align}
Thus we have
\begin{equation*}
\alpha^{\rho}[u]\geq \alpha^{\rho}[\gamma]\alpha^{\rho}[u]+\int_{0}^{1}
\mathcal{K}^{\rho}(s)g(s)f(s,u(s))\,ds.
\end{equation*}
This implies
\begin{equation}\label{Eq1}
  \alpha^{\rho} [u]\geq  \frac{1}{1-\alpha^{\rho}[\gamma] }\int_{0}^{1} \mathcal{K}^{\rho}(s)g(s)f(s,u(s))\,ds.
\end{equation}
Using \eqref{Eq1} in \eqref{Eq0}  we obtain
$$
 u(t) \geq \rho f_{\rho,\rho/c}\Bigl(\frac{\gamma(t)}{1-\alpha^{\rho}[\gamma]}\int_0^1\mathcal{K}^{\rho}(s)g(s)\,ds+\int_0^1
k(t,s)g(s)\,ds\Bigr).
$$
Taking the infimum for $t\in [0,1]$ gives
\begin{equation*}
\rho \geq \rho f_{\rho,\rho/c} \Bigl(\inf_{t \in [0,1]
}\Bigl\{\frac{\gamma(t)}{1-\alpha^{\rho}[\gamma]}\int_0^1\mathcal{K}^{\rho}(s)g(s)\,ds+\int_0^1
k(t,s) g(s)\,ds\Bigr\} \Bigr).
 \end{equation*}
  Thus from \eqref{ind 0 2 +}  we have $\rho> \rho$.
This is a contradiction that proves the result.
\end{proof}
We now state  a result regarding the existence of at least one \emph{positive} solution of \eqref{eqthamm}. The proof
follows by the properties of fixed point index and is omitted. By
expanding the lists in conditions $(S_{1}),(S_{2})$, it is
possible to state multiplicity results,
see for example Theorem~\ref{mult-sys} and the paper~\cite{kljdeds}.
\begin{thm}\label{mult-equ}
The integral equation \eqref{eqthamm} has at least one positive solution
in $K$ if one of the following conditions holds.

\begin{enumerate}
\item[$(S_{1})$]  There exist $\rho _{1},\rho _{2}\in (0,+\infty )$ with $\rho
_{1}/c<\rho_{2}$ such that $(\mathrm{I}_{\rho _{1}}^{0})$ and  $(\mathrm{I}_{\rho _{2}}^{1})$ hold.

\item[$(S_{2})$] There exist $\rho _{1},\rho _{2}\in (0,+\infty )$ with $\rho
_{1}<\rho_{2}$ such that $(\mathrm{I}_{\rho _{1}}^{1})$ and $(\mathrm{I}_{\rho _{2}}^{0})$ hold.
\end{enumerate}
\end{thm}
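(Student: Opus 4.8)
The plan is to read off the existence of a fixed point of $T$ — equivalently a positive solution of~\eqref{eqthamm} — from the index computations in Lemmas~\ref{ind11} and~\ref{indice0*}, combined with property~(4) of the fixed point index Lemma stated at the beginning of this section. The hypotheses guarantee that $T$ maps $K$ into itself and is compact, so $i_K(T,\cdot)$ is well defined on the open bounded sets $K_\rho$ and $V_\rho$, and a nonzero index already forces a fixed point via property~(3); the point of combining two sets is to locate the solution in an annular region where it is nonzero, and to accommodate both orderings of the index values.

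Consider first $(S_1)$. Condition $(\mathrm{I}^0_{\rho_1})$ together with Lemma~\ref{indice0*} gives $i_K(T,V_{\rho_1})=0$, while $(\mathrm{I}^1_{\rho_2})$ and Lemma~\ref{ind11} give $i_K(T,K_{\rho_2})=1$. To invoke property~(4) with $\Omega^1_K=V_{\rho_1}$ and $\Omega_K=K_{\rho_2}$ I must verify the strict inclusion $\overline{V_{\rho_1}}\subset K_{\rho_2}$. Using the nesting $V_{\rho_1}\subset K_{\rho_1/c}$ recorded before Lemma~\ref{ind11}, the closure $\overline{V_{\rho_1}}$ lies in $\{u\in K:\|u\|\le\rho_1/c\}$; since the standing hypothesis of $(S_1)$ is precisely $\rho_1/c<\rho_2$, this set is contained in $K_{\rho_2}$. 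Property~(4) then yields a fixed point $u$ in $K_{\rho_2}\setminus\overline{V_{\rho_1}}$, and being outside $\overline{V_{\rho_1}}$ means $\min_{t\in[0,1]}u(t)>\rho_1>0$, so $u$ is a positive solution.

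For $(S_2)$ the roles are interchanged: $(\mathrm{I}^1_{\rho_1})$ gives $i_K(T,K_{\rho_1})=1$ and $(\mathrm{I}^0_{\rho_2})$ gives $i_K(T,V_{\rho_2})=0$. The inclusion now required is $\overline{K_{\rho_1}}\subset V_{\rho_2}$, which follows from the elementary observation that any $u\in K$ satisfies $\min_{t\in[0,1]}u(t)\le\|u\|$, so $\|u\|\le\rho_1<\rho_2$ already forces $\min_{t\in[0,1]}u(t)<\rho_2$. Applying property~(4) with $\Omega^1_K=K_{\rho_1}$ and $\Omega_K=V_{\rho_2}$ then produces a fixed point $u$ in $V_{\rho_2}\setminus\overline{K_{\rho_1}}$, for which $\|u\|>\rho_1$ and hence $\min_{t\in[0,1]}u(t)\ge c\|u\|>0$, again a positive solution.

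The only step requiring genuine care — and the reason $(S_1)$ lists $\rho_1/c<\rho_2$ while $(S_2)$ only needs $\rho_1<\rho_2$ — is the verification of the strict inclusion $\overline{\Omega^1_K}\subset\Omega_K$ demanded by property~(4). This is exactly where the separation between the two radii and the cone relation $K_\rho\subset V_\rho\subset K_{\rho/c}$ enter, the asymmetry arising because $\overline{V_{\rho_1}}$ is controlled only by $\rho_1/c$ in the norm, whereas $\overline{K_{\rho_1}}$ is already norm-bounded by $\rho_1$. Once these inclusions are in place, the conclusion is a direct citation of the two index lemmas, which is why the authors omit the argument.
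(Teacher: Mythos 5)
Your argument is correct and is exactly the standard fixed-point-index proof the authors have in mind (they omit it, saying only that it ``follows by the properties of fixed point index''): compute the two indices via Lemmas~\ref{ind11} and~\ref{indice0*}, verify the nesting $\overline{V_{\rho_1}}\subset K_{\rho_2}$ (resp.\ $\overline{K_{\rho_1}}\subset V_{\rho_2}$) from $V_{\rho}\subset K_{\rho/c}$ and the stated radius separation, and conclude with property~(4). The only nitpick is that lying outside $\overline{V_{\rho_1}}$ (resp.\ $\overline{K_{\rho_1}}$) guarantees only $\min_{t\in[0,1]}u(t)\ge\rho_1$ (resp.\ $\|u\|\ge\rho_1$), not the strict inequality you wrote, but this is immaterial since $\rho_1>0$ already yields positivity.
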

In the following example we present an application of Theorem~\eqref{mult-equ} to a model of a chemical reactor.
\begin{ex}
We consider the second order ordinary differential equation
\begin{equation}\label{eq1}
u''(t)-\lambda u'(t) + \lambda \mu (\beta - u(t))^+ e^{u(t)}=0,\ t \in (0,1),
\end{equation}
subject to  the nonlinear BCs
\begin{equation}\label{bcA}
u'(0)=\lambda u(0),\ u'(1)=H[u],
\end{equation}
where $\beta, \lambda, \mu>0$ and $H$ is a suitable functional.

This BVP arises when modelling the steady states of an adiabatic
chemical reactor of length 1. Here $\lambda$ is the Peclet
number, $\mu$ is the Damkohler number, $\beta$ is the
dimensionless adiabatic temperature rise and $u(t)$ is the local
temperature at a point $t$ of the tube, see for example
\cite{feng, gippreactor, Maam} and references therein.

A local version of the BVP  \eqref{eq1}-\eqref{bcA}, that is
$$
u''(t)-\lambda u'(t) + \lambda \mu (\beta - u(t))^+ e^{u(t)}=0,\ u'(0)=\lambda u(0),\ u'(1)=0,
$$
has been studied, via different methods, by a number of authors,
we refer the reader to \cite{BOKR, CLR, feng, HOTI, madbouly, Maam, RCH, Saad} and references therein.

The nonlinear condition in~\eqref{bcA} can describe, for example,
a feedback control system on the reactor that adds or removes heat
according to the temperatures detected by some sensors located along the tube.

The solutions of the BVP~\eqref{eq1}-\eqref{bcA} are given
by the solutions of  the perturbed Hammestein integral equation
\begin{equation*}
u(t)=\gamma(t)H[u]+\int_{0}^{1}k(t,s)f(u(s))\,ds,
\end{equation*}
where
\begin{equation*}
\gamma(t)=\frac{1}{\lambda}e^{\lambda(t-1)},\quad
k(t,s)=\begin{cases}
e^{\lambda (t-s)},\ &s>t,\\
1,\ &s\leq t,
\end{cases}
\text{ and }
f(u)=\begin{cases}
\mu (\beta - u) e^{u},\ &u\leq \beta, \\
0,\ & u>\beta.
\end{cases}
\end{equation*}
We work in the cone
\begin{equation*}
K = \{u\in C[0,1], \; \min_{t\in [0,1]}u(t) \geq c \|u\|\},
\end{equation*}
 where the constant $c= e^{-\lambda}$, since
$$
e^{-\lambda} \leq k(t,s)\leq 1\text{ for } t \in [0,1]\times [0,1],
$$
and the conditions on $k$ and $\gamma$ are satisfied with  $\Phi(s)=1$ and $c_1=c_2=e^{-\lambda}$.

In order to illustrate the growth conditions that occur in Theorem~\ref{mult-equ}, we consider the BVP
\begin{equation}\label{ex-chem-eq}
u''(t)-\frac{1}{3} u'(t) + \frac{3}{10} \Bigl(\frac{11}{5} - u(t)\Bigr)^+ e^{u(t)}=0,\ t \in (0,1),
\end{equation}
\begin{equation}\label{ex-chem-bc}
u'(0)=\frac{1}{3} u(0),\, u'(1)=10^{-\frac{3}{2}}\sqrt{u(1/2)}.
\end{equation}

The choice $$\rho_1=\frac{71}{1000},
\,\,\rho_2=\frac{53}{25}(<\frac{11}{5}),
\,\,\alpha^{\rho_1}[u]=\frac{1}{10}u(1/2)
,\,\,\alpha^{\rho_2}[u]=10^{-\frac{5}{4}}u(1/2),$$
yields (in what follows the numbers are rounded to the third decimal place unless exact)
\begin{itemize}
\item[] $\alpha^{\rho_1}[\gamma]=0.254<1$ and $\alpha^{\rho_2}[\gamma]=0.143<1$,
\item[]  $H[u]=10^{-\frac{3}{2}}\sqrt{u(1/2)}
\geq \frac{1}{10}u(1/2)=\alpha^{\rho_1}[u]$ for $\rho_1\leq
u\leq\rho_1/c$,
\item[]  $H[u]=10^{-\frac{3}{2}}\sqrt{u(1/2)}\leq 10^{-\frac{5}{4}}u(1/2)=\alpha^{\rho_2}[u]$ for $c\rho_2\leq u\leq\rho_2$,
\item[]  $\inf\left\{f(u):\,\,u\in [\rho_1,
\rho_1/c]\right\}=2.057>\frac{71}{1000}\cdot 1.917$,
\item[]  $\sup\left\{f(u):\,\,u\in \left[c\rho_2,
\rho_2\right]\right\}=2.811<\frac{53}{25}\cdot 2.551$.
\end{itemize}
Thus the conditions $(\mathrm{I}^{0}_{\rho_1})$ of Lemma
\ref{indice0*} and $(\mathrm{I}^{1}_{\rho_2})$ of Lemma
\ref{ind11} are satisfied. By Theorem~\ref{mult-equ} it follows that
the BVP~\eqref{ex-chem-eq}-\eqref{ex-chem-bc} has a strictly positive solution $u \in {K}_{\rho_2}\setminus \overline{V}_{\rho_1}$ with the following localization property
$$
\rho_1=71/1000 \leq u(t)\leq 53/25= \rho_2,\ \text{for every}\
t\in [0,1].
$$
\end{ex}
\subsection{Non-negative Kernels}
We now consider the case of kernels that have a weaker positivity
property and therefore we make, in this Subsection, the following
assumptions on the terms that occur in~\eqref{eqthamm}.
\begin{itemize}
\item  $k:[0,1] \times [0,1]\rightarrow [0,+\infty)$ is measurable, and for every $\tau\in
[0,1] $ we have
\begin{equation*}
\lim_{t \to \tau} |k(t,s)-k(\tau,s)|=0 \;\text{    for a.e.   } s \in [0,1] .
\end{equation*}{}
\item  There exist $ [a,b]\subseteq [0,1]$, a function $\Phi \in L^{\infty}[0,1]$ and a constant $c_{1} \in (0,1]$ such that
\begin{align*}
k(t,s)\leq \Phi(s) \text{ for } &t \in [0,1] \text{ and a.e.}\,\, s\in [0,1], \\
k(t,s) \geq c_{1}\Phi(s) \text{ for } &t\in [a,b] \text{ and a.e.} \,\, s \in [0,1].
\end{align*}%
{}
\item $g\,\Phi \in L^1[0,1] $, $g(s) \geq 0$ for a.e. $s\in [0,1]$ and $\int_a^b \Phi(s)g(s)\,ds >0$.{}
\item The nonlinearity $f:[0,1]\times [0,+\infty) \to [0,+\infty)$ satisfies Carath\'{e}odory conditions, that is, $f(\cdot,u)$ is measurable for each fixed $u\in [0,+\infty)$ , $f(t,\cdot)$ is continuous for a.e. $t\in [0,1]$, and for each $r>0$, there exists $\phi_{r} \in L^{\infty}[0,1] $ such that{}
\begin{equation*}
f(t,u)\le \phi_{r}(t) \;\text{ for all } \; u\in [0,r]\;\text{ and a.e. } \; t\in [0,1] .
\end{equation*}{}
\item$\gamma:[0,1]\rightarrow [0,+\infty)$ is continuous and there exists $c_{2} \in(0,1] \;\text{such that}\; \gamma(t) \geq c_{2}\|\gamma\| \;\text{for}\; t \in [a,b]$.
\end{itemize}
Thus we work in the cone (with an abuse of notation)
\begin{equation}\label{cone2}
K:=\{u\in C[0,1]:\,u\geq 0,\,\, \min_{t\in [a,b]}u(t)\ge c\|u\|\},
\end{equation}
with $c=\min\{c_1,c_2\}$ and assume that
\begin{itemize}
\item$H: K\to [0,+\infty)$ is  compact.
\end{itemize}
Under the assumptions above, $T$ leaves the cone \eqref{cone2}
invariant and is compact.\\
The cone of \emph{non-negative} functions~\eqref{cone2} was firstly used by Krasnosel'ski\u\i{}, see~\cite{krzab}, and D.~Guo, see e.g.~\cite{guolak}.
With an abuse of notation,  we make use of the following open
bounded sets (relative to $K$)
$$
K_{\rho}:=\{u\in K: \|u\|<\rho\},\quad V_{\rho}:=\{u \in K: \displaystyle{\min_{t\in [a,b]}}u(t)<\rho\}.
$$
Note that $K_{\rho}\subset V_{\rho}\subset K_{\rho/c}$.\\
 We state the following results that correspond to Lemma~\ref{ind11} and Lemma~\ref{indice0*}. 
\begin{lem} \label{ind12nonneg}
Assume that
\begin{enumerate}
\item[$(\mathrm{I}^{1}_{\rho})$] the hyphoteses of Lemma \ref{ind11} hold with condition \eqref{indice12} replaced by
\begin{equation*}
  f^{0,\rho}\Bigl(\sup_{t \in [0,1] }\Bigl\{\frac{\gamma(t)}{1-\alpha^{\rho}[\gamma]}\int_0^1\mathcal{K}^{\rho}(s)g(s)\,ds+\int_0^1k(t,s)g(s)\,ds\Bigr\} \Bigr)< 1,
\end{equation*}
where
\begin{equation*}
 f^{0,\rho}:=\mathrm{ess }\sup\Bigl\{\frac{f(t,u)}{\rho},\,\, t\in [0,1],\,\,\,0\le
u \le \rho\Bigr\}.
\end{equation*}
\end{enumerate}
Then $i_{K}(T,K_{\rho})$ is $1$.
\end{lem}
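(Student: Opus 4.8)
The plan is to mimic verbatim the proof of Lemma~\ref{ind11}, the only substantive change being the range over which the nonlinearity $f$ is estimated. By property~(2) of the index Lemma it suffices to show that $\mu\,u\neq Tu$ for every $u\in\partial K_\rho$ and every $\mu\geq 1$, which forces $i_K(T,K_\rho)=1$. Arguing by contradiction, I would assume there are $\mu\geq 1$ and $u\in\partial K_\rho$ with $\mu\,u(t)=Tu(t)$ for all $t$. Using $H[u]\le\alpha^\rho[u]$ on $\partial K_\rho$, together with $\gamma\ge 0$ and $k\ge 0$, I obtain exactly as in~\eqref{rel}
\begin{equation*}
\mu\,u(t)\le \gamma(t)\alpha^{\rho}[u]+\int_{0}^{1} k(t,s)g(s)f(s,u(s))\,ds.
\end{equation*}

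Next I would apply the positive linear functional $\alpha^\rho$ to both sides and use $\alpha^\rho[\gamma]<1$ and $\mu\ge 1$ to deduce, precisely as in~\eqref{alpha},
\begin{equation*}
\alpha^{\rho}[u]\le\frac{1}{1-\alpha^{\rho}[\gamma]}\int_0^1\mathcal{K}^{\rho}(s)g(s)f(s,u(s))\,ds.
\end{equation*}
Reinserting this bound into the previous pointwise inequality gives
\begin{equation*}
\mu\,u(t)\le \frac{\gamma(t)}{1-\alpha^{\rho}[\gamma]}\int_0^1\mathcal{K}^{\rho}(s)g(s)f(s,u(s))\,ds+\int_0^1 k(t,s)g(s)f(s,u(s))\,ds.
\end{equation*}

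At this point comes the one genuine difference from Lemma~\ref{ind11}, and it is where I expect the only care to be needed. In the cone~\eqref{cone2} a function $u$ with $\|u\|=\rho$ satisfies $u(s)\ge c\rho$ \emph{only} for $s\in[a,b]$; on the remainder of $[0,1]$ all I can assert is $0\le u(s)\le\rho$. Consequently the uniform bound available on the whole of $[0,1]$ is $f(s,u(s))\le\rho\,f^{0,\rho}$, with the essential supremum taken over the enlarged range $0\le u\le\rho$, which is exactly why $f^{c\rho,\rho}$ must be replaced by $f^{0,\rho}$. Substituting this bound and taking the supremum over $t\in[0,1]$ (so that the left-hand side becomes $\mu\rho$), I arrive at
\begin{equation*}
\mu\rho\le \rho\,f^{0,\rho}\Bigl(\sup_{t\in[0,1]}\Bigl\{\frac{\gamma(t)}{1-\alpha^{\rho}[\gamma]}\int_0^1\mathcal{K}^{\rho}(s)g(s)\,ds+\int_0^1 k(t,s)g(s)\,ds\Bigr\}\Bigr),
\end{equation*}
and the revised hypothesis forces $\mu\rho<\rho$, contradicting $\mu\ge 1$. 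I anticipate no serious obstacle here: the loss of the lower bound $c\rho$ outside $[a,b]$ is precisely what compels the use of the full-range supremum $f^{0,\rho}$, and every other step transfers unchanged from the strongly positive case.
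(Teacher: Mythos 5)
Your proposal is correct and is exactly the argument the paper intends: the paper omits the proof of this lemma precisely because it is the proof of Lemma~\ref{ind11} repeated verbatim, with the sole modification you identify, namely that for $u\in\partial K_{\rho}$ in the cone~\eqref{cone2} one only has $0\le u(s)\le\rho$ on all of $[0,1]$ (the lower bound $c\rho$ holding merely on $[a,b]$), so the estimate $f(s,u(s))\le\rho\,f^{0,\rho}$ must be used in place of $f(s,u(s))\le\rho\,f^{c\rho,\rho}$. All other steps transfer unchanged since $\gamma\ge0$, $k\ge0$ and $\mathcal{K}^{\rho}\ge0$ in this setting.
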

\begin{lem} \label{ind 0 2 nonneg}
Assume that
\begin{enumerate}
\item[$(\mathrm{I}^{0}_{\rho})$] the hyphoteses of Lemma \ref{indice0*}  hold with condition \eqref{ind 0 2 +} replaced by
\begin{equation*}
  f_{\rho,\rho/c}\Bigl(\inf_{t \in [a,b] }\Bigl\{\frac{\gamma(t)}{1-\alpha^{\rho}[\gamma]}\int_a^b\mathcal{K}^{\rho}(s)g(s)\,ds+\int_a^b k(t,s) g(s)\,ds\Bigr\} \Bigr)> 1,
  \end{equation*}
  where
\begin{equation*}
f_{\rho,\rho/c}:=\mathrm{ess }\inf\Bigl\{\frac{f(t,u)}{\rho},\,\,
t\in [a,b],\,\,\,\rho\le u \le \rho/c\Bigr\}.
\end{equation*}
\end{enumerate}
Then $i_{K}(T,V_{\rho})$ is $0$.
\end{lem}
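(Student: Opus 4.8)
The plan is to mirror the proof of Lemma~\ref{indice0*}, localizing every estimate to the subinterval $[a,b]$ dictated by the weaker positivity of $k$ and by the definition of the cone~\eqref{cone2}. First I would record that the constant function $e(t)\equiv 1$ lies in $K$, since it is non-negative and $\min_{t\in[a,b]}e(t)=1\geq c=c\|e\|$ (because $c\leq 1$). I would then establish that $u\neq Tu+\lambda e$ for every $u\in\partial V_\rho$ and every $\lambda\geq 0$; by property~(1) of the fixed point index Lemma this yields $i_K(T,V_\rho)=0$.

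Arguing by contradiction, suppose $u=Tu+\lambda e$ for some $u\in\partial V_\rho$ and $\lambda\geq 0$. Using $H[u]\geq\alpha^\rho[u]$ on $\partial V_\rho$ together with $\lambda e\geq 0$, I would obtain for every $t\in[0,1]$ the inequality $u(t)\geq\gamma(t)\alpha^\rho[u]+\int_0^1 k(t,s)g(s)f(s,u(s))\,ds$, exactly as in~\eqref{Eq0}. Applying the positive functional $\alpha^\rho$ to both sides, using $\alpha^\rho[\gamma(\cdot)\alpha^\rho[u]]=\alpha^\rho[\gamma]\,\alpha^\rho[u]$ and the definition $\mathcal{K}^\rho(s)=\int_0^1 k(t,s)\,dA^\rho(t)$, I would derive as in~\eqref{Eq1} the bound $\alpha^\rho[u]\geq\frac{1}{1-\alpha^\rho[\gamma]}\int_0^1\mathcal{K}^\rho(s)g(s)f(s,u(s))\,ds$, the hypothesis $\alpha^\rho[\gamma]<1$ being precisely what legitimizes this rearrangement. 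Substituting back gives $u(t)\geq\frac{\gamma(t)}{1-\alpha^\rho[\gamma]}\int_0^1\mathcal{K}^\rho(s)g(s)f(s,u(s))\,ds+\int_0^1 k(t,s)g(s)f(s,u(s))\,ds$.

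The step that departs from Lemma~\ref{indice0*}, and the one I expect to be the crux, is the passage to the subinterval $[a,b]$. For $u\in\partial V_\rho$ one has $\min_{t\in[a,b]}u(t)=\rho$, whence $\rho\geq c\|u\|$ and so $\|u\|\leq\rho/c$; consequently $\rho\leq u(s)\leq\rho/c$ holds \emph{only} for $s\in[a,b]$, while outside $[a,b]$ no lower bound on $u$ is available. Since $f,g,k$ and $\mathcal{K}^\rho$ are all non-negative, I would discard the contributions outside $[a,b]$, replacing each $\int_0^1$ by $\int_a^b$, and then invoke $f(s,u(s))\geq\rho f_{\rho,\rho/c}$ for a.e. $s\in[a,b]$ to obtain $u(t)\geq\rho f_{\rho,\rho/c}\bigl(\frac{\gamma(t)}{1-\alpha^\rho[\gamma]}\int_a^b\mathcal{K}^\rho(s)g(s)\,ds+\int_a^b k(t,s)g(s)\,ds\bigr)$. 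Taking the infimum over $t\in[a,b]$, using $\inf_{t\in[a,b]}u(t)=\rho$ and pulling the non-negative constant $\rho f_{\rho,\rho/c}$ out of the infimum, I would reach $\rho\geq\rho f_{\rho,\rho/c}\bigl(\inf_{t\in[a,b]}\{\cdots\}\bigr)$, which contradicts the displayed hypothesis of $(\mathrm{I}^0_\rho)$ asserting that this quantity is strictly greater than $\rho$. This contradiction completes the index computation.
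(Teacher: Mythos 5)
Your argument is correct and is essentially the proof the paper leaves implicit: it follows the scheme of Lemma~\ref{indice0*} (and of the explicitly proved sign-changing analogue, Lemma~\ref{ind 0 2 + -}), with the only new ingredient being exactly the one you identify, namely discarding the non-negative contributions outside $[a,b]$ before invoking $\rho\le u(s)\le\rho/c$ there. All the supporting facts you use ($e\equiv 1\in K$, $\mathcal{K}^{\rho}\ge 0$, $\alpha^{\rho}[\gamma]<1$ justifying the rearrangement, and $\min_{t\in[a,b]}u(t)=\rho$ on $\partial V_{\rho}$) are available under the stated hypotheses, so no gap remains.
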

A result regarding the existence of \emph{non-negative} solutions, similar to Theorem~\ref{mult-equ}, holds in this case. We omit, for brevity, the statement of this result.

We conclude this Subsection with an application to a cantilever beam model.
\begin{ex}
Consider the fourth order differential equation
\begin{equation}\label{de}
u^{(4)}(t)=g(t)f(t,u(t)),\ t \in (0,1),
\end{equation}
subject to the nonlinear BCs
\begin{equation}\label{debc}
u(0)=u'(0)=u''(1)=0,\; u'''(1)+H[u]=0.
\end{equation}

Equation~\eqref{de} models the stationary states of the deflection
of an elastic beam. This kind of equation has been studied by
several authors under a variety of BCs that describe physical
constrains on the beam, for example being clamped, hinged or
mixed. The homogeneous BCs
$$
u(0)=u'(0)=u''(1)=u'''(1)=0
$$
model the so-called cantilever bar, that is a bar clamped on the
left end and where the right end is free to move with vanishing
bending moment and shearing force. These type of BCs have been
investigated in \cite{da-jh, li, yao-cant} and, in particular,
\cite{da-jh} provides a detailed insight on the physical
motivation for this problem. The BCs~\eqref{debc} model a
cantilever bar with a controller acting on the shearing force of
its right end, see for example \cite{cate, cantilever,  lizhai,
song} and the references therein.

We associate to the BVP~\eqref{de}-\eqref{debc}
the perturbed Hammerstein integral equation
\begin{equation*}
u(t)=\gamma(t)H[u]+ \int_{0}^{1} k(t,s)g(s)f(s,u(s))\,ds,
\end{equation*}
where (see  for example \cite{cantilever}) $\gamma$ and $k$ are
given by
 $$
 \gamma (t)=\frac{1}{6}(3t^2-t^3),\quad
 k(t,s)=\begin{cases} \frac{1}{6}(3t^2s-t^3),\ &s\geq t,\\ \frac{1}{6}(3s^2t-s^3),\ &s\leq
 t.
\end{cases}
 $$
The function $\Phi$ is given by
$$
\Phi (s)= \frac{1}{2}s^2-\frac{1}{6}s^3.
$$
This form for $\Phi$ corrects the typo ($ \Phi (s)=
\frac{1}{2}s^2-\frac{1}{2}s^3 $)  present in \cite{cantilever}.

For $[a,b]\subset (0,1]$, the choice
\begin{equation*}
c=\frac{1}{2}a^2(3-a)
\end{equation*}
enable us to utilize the cone
\begin{equation*}
K=\bigl\{u\in C[0,1]: \,u\geq 0,\,\, \min_{t \in [a,b]}u(t)\geq
c\|u\|\bigr\}.
\end{equation*}

In order to illustrate the constants that occur in our theory, we
consider the BVP
\begin{equation}\label{beam1}
u^{(4)}(t)=t^2u^4(t),\ t \in (0,1),
\end{equation}
\begin{equation}\label{beam2}
u(0)=u'(0)=u''(1)=0,\;\,\,\, u'''(1)+u^2(3/4)=0.
\end{equation}

Then with the choice
$$[a,b]=\left[\frac{1}{2},1\right],\,\,\rho_1=\frac{1}{2},\,\,\rho_2=5,\,\,
\alpha^{\rho_1}[u]=3\,u(3/4),\,\,
\alpha^{\rho_2}[u]=\frac{1}{2}\,u(3/4),$$ we obtain:
\begin{itemize}
\item[] $c={5}/{16}$, $\alpha^{\rho_1}[\gamma]=0.633<1$ and
$\alpha^{\rho_2}[\gamma]=0.105<1$,
\item[] $H[u]=u^2(3/4)\leq 3\,u(3/4)=\alpha^{\rho_1}[u]$ for $0\leq u\leq \rho_1$,
\item[] $H[u]=u^2(3/4)\geq \frac{1}{2}\,u(3/4)=\alpha^{\rho_2}[u]$ for $\rho_2\leq
u\leq\rho_2/c$,
\item[] $\sup\left\{f(t,u):\,\,(t,u)\in [0,1]\times[0,
\rho_1]\right\}=0.062<\frac{1}{2}\cdot 2.837$,
\item[] $\inf\left\{f(t,u):\,\,(t,u)\in \left[\frac{1}{2},1\right]\times\left[\rho_2,
\rho_2/c\right]\right\}=156.25>5\cdot 24.837$.
\end{itemize}
Thus the conditions $(\mathrm{I}^{1}_{\rho_1})$ of
Lemma~\ref{ind12nonneg} and $(\mathrm{I}^{0}_{\rho_2})$ of
Lemma~\ref{ind 0 2 nonneg} are satisfied. It follows that there
exists a non-negative solution $u \in {V}_{\rho_2}\setminus
\overline{K}_{\rho_1}$ of \eqref{beam1}-\eqref{beam2} such that
$$
0\leq u(t)\leq 16=\rho_2/c\text{ for }t \in [0,1] \mbox{ and
}u(t)\geq5/32= c\rho_1 \text{ for }t \in
\Bigl[\frac{1}{2},1\Bigr].
$$
\end{ex}

\subsection{Kernels that change sign} We now study the case of the kernels that are allowed to change sign. In this Subsection we make the following assumptions on the terms that occur in~\eqref{eqthamm}.
\begin{itemize}
\item  $k:[0,1] \times[0,1]\rightarrow \R$ is measurable, and for every $\tau\in [0,1] $ we have
\begin{equation*}
\lim_{t \to \tau} |k(t,s)-k(\tau,s)|=0 \;\text{    for a.e.   } s \in [0,1] .
\end{equation*}{}
\item  There exist $[a,b]\subseteq [0,1]$, a function $\Phi \in L^{\infty}[0,1]$ and a constant $c_{1} \in (0,1]$ such that
\begin{align*}
|k(t,s)|\leq \Phi(s) \text{ for } &t \in [0,1] \text{ and a.e. }\, s\in [0,1], \\
k(t,s) \geq c_{1}\Phi(s) \text{ for } &t\in [a,b] \text{ and a.e. } \, s \in [0,1].
\end{align*}
{}
\item $g\,\Phi \in L^1[0,1] $, $g(s) \geq 0$ for a.e.  $s\in [0,1]$ and $\int_a^b \Phi(s)g(s)\,ds >0$.{}
\item   The nonlinearity $f:[0,1]\times \mathbb{R} \to [0,+\infty)$ satisfies Carath\'{e}odory conditions, that is, $f(\cdot,u)$ is measurable for each fixed $u\in \R$, $f(t,\cdot)$
 is continuous for a.e. $t\in [0,1]$, and for each $r>0$ there exists $\phi_{r} \in L^{\infty}[0,1] $ such that{}
\begin{equation*}
f(t,u)\le \phi_{r}(t) \;\text{ for all } \; u\in [-r,r],\;\text{ and a.e. } \; t\in [0,1] .
\end{equation*}{}
\item $\gamma:[0,1]\rightarrow \mathbb{R}$ is continuous and there exists $c_{2} \in(0,1]$ such that $\gamma(t) \geq c_{2}\|\gamma\| $ for $ t \in [a,b]$.
\end{itemize}
Therefore we work in the cone
\begin{equation}\label{cone3}
K:=\{u\in C[0,1]:\ \min_{t\in [a,b]}u(t)\ge c\|u\|\},
\end{equation}
with $c=\min\{c_1,c_2\}$ and assume that
\begin{itemize}
\item$H: K\to [0,+\infty)$ is  a compact operator.
\end{itemize}
\begin{rem}
Note that the functions in the cone~\eqref{cone3} are positive on the
subset $[a,b]$ but are allowed to change sign in $[0,1]$. This cone is similar to the cone~\eqref{cone2} and
has been introduced by Infante and Webb in \cite{gijwjiea}.
\end{rem}
With the assumptions above the operator $T$ leaves the cone \eqref{cone3} invariant and is compact.\\
We use the open bounded sets
$$
K_{\rho}:=\{u\in K: \|u\|<\rho\},\quad V_{\rho}:=\{u \in K:
\displaystyle{\min_{t\in [a,b]}}u(t)<\rho\}.
$$
Note that the sets $K_{\rho}$ and $V_{\rho}$ are nested.

Now, we give a condition which implies that  the index is $1$ on the set $K_{\rho}$.

\begin{lem} \label{ind12+-}

Assume that
\begin{enumerate}
\item[$(\mathrm{I}^{1}_{\rho})$] the hyphoteses of Lemma \ref{ind11} hold with condition \eqref{indice12} replaced by
\begin{equation}\label{ind12change}
  f^{-\rho,\rho}\Bigl(\sup_{t \in [0,1] }\Bigl\{\frac{|\gamma(t)|}{1-\alpha^{\rho}[\gamma]}\int_0^1\mathcal{K}^{\rho}(s)g(s)\,ds+\int_0^1|k(t,s)|g(s)\,ds\Bigr\} \Bigr)< 1,
\end{equation}
where
\begin{equation*}
f^{-\rho,\rho}:=\mathrm{ess }\sup\left\{\frac{f(t,u)}{\rho},\,\,
t\in [0,1],\,\,\,-\rho\le u \le \rho\right\}.
\end{equation*}
\end{enumerate}
Then $i_{K}(T,K_{\rho})$ is $1$.
\end{lem}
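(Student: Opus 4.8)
The plan is to follow the template of Lemma~\ref{ind11} and invoke property~(2) of the fixed point index recalled above: it suffices to prove that $\mu\, u\neq Tu$ for every $u\in\partial K_\rho$ and every $\mu\geq1$, which forces $i_K(T,K_\rho)=1$. So I would suppose, for contradiction, that $\mu\, u(t)=\gamma(t)H[u]+\int_0^1 k(t,s)g(s)f(s,u(s))\,ds$ for some such $u$ and $\mu$, and derive $\mu\rho<\rho$.

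The only genuine new feature compared with Lemma~\ref{ind11} is that both $\gamma$ and $k$ may now be negative, so I cannot bound $u(t)$ directly; instead I would pass to $|u(t)|$. Since $H[u]\geq0$, $g\geq0$ and $f\geq0$, the triangle inequality gives
$$\mu\,|u(t)|\leq|\gamma(t)|H[u]+\int_0^1|k(t,s)|g(s)f(s,u(s))\,ds,$$
and then $H[u]\leq\alpha^\rho[u]$ on $\partial K_\rho$ yields
$$\mu\,|u(t)|\leq|\gamma(t)|\alpha^\rho[u]+\int_0^1|k(t,s)|g(s)f(s,u(s))\,ds.$$
The crucial step is to estimate $\alpha^\rho[u]$; here I would apply the positive functional $\alpha^\rho$ to the \emph{original signed} equation rather than to the inequality above, precisely so that the kernel term reappears as $\mathcal{K}^\rho(s)=\int_0^1 k(t,s)\,dA^\rho(t)$ (no absolute value, matching \eqref{ind12change}) and the coefficient of $H[u]$ reappears as $\alpha^\rho[\gamma]$. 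This gives $\mu\,\alpha^\rho[u]=\alpha^\rho[\gamma]H[u]+\int_0^1\mathcal{K}^\rho(s)g(s)f(s,u(s))\,ds$; using $0\leq H[u]\leq\alpha^\rho[u]$ together with $\mu\geq1$ and $\alpha^\rho[\gamma]<1$ then produces
$$\alpha^\rho[u]\leq\frac{1}{1-\alpha^\rho[\gamma]}\int_0^1\mathcal{K}^\rho(s)g(s)f(s,u(s))\,ds,$$
exactly as in \eqref{alpha}. Substituting this estimate into the previous inequality, bounding $f(s,u(s))\leq\rho f^{-\rho,\rho}$ (valid because $\|u\|=\rho$ forces $u(s)\in[-\rho,\rho]$), taking the supremum over $t\in[0,1]$ and using $\sup_t|u(t)|=\rho$, I arrive at $\mu\rho\leq\rho f^{-\rho,\rho}\bigl(\sup_t\{\cdots\}\bigr)$; the hypothesis \eqref{ind12change} makes the right-hand side strictly less than $\rho$, giving $\mu\rho<\rho$ and contradicting $\mu\geq1$.

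I expect the main obstacle to be the bookkeeping of signs in the estimate of $\alpha^\rho[u]$. One must check that $\alpha^\rho[u]\geq0$ (which follows from $\alpha^\rho[u]\geq H[u]\geq0$), that the quantity $\int_0^1\mathcal{K}^\rho g\,f$ is non-negative so that replacing $\tfrac{1}{\mu-\alpha^\rho[\gamma]}$ by the larger $\tfrac{1}{1-\alpha^\rho[\gamma]}$ preserves the inequality, and that the coefficient $\alpha^\rho[\gamma]$ enters with the correct sign when passing from $H[u]$ to $\alpha^\rho[u]$. Away from these sign verifications the argument is the verbatim analogue of the proof of Lemma~\ref{ind11}, with $\gamma$, $k$ and the growth level $f^{c\rho,\rho}$ replaced by $|\gamma|$, $|k|$ and $f^{-\rho,\rho}$ respectively.
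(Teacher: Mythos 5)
Your proposal is correct and follows essentially the same route as the paper's proof: apply $\alpha^{\rho}$ to the signed equation $\mu u=Tu$ to obtain $\alpha^{\rho}[u]\leq\frac{1}{1-\alpha^{\rho}[\gamma]}\int_0^1\mathcal{K}^{\rho}(s)g(s)f(s,u(s))\,ds$, then bound $\mu|u(t)|$ via the triangle inequality with $|\gamma|$ and $|k|$, and take the supremum to contradict $\mu\geq1$. The sign verifications you flag (non-negativity of $\alpha^{\rho}[u]$, of $\alpha^{\rho}[\gamma]$ and of the $\mathcal{K}^{\rho}$-integral) are exactly the points the paper uses implicitly, so nothing is missing.
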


\begin{proof}
If there exist $\mu\geq 1$ and $u \in \partial K_{\rho}$ such that  $\mu u(t)=Tu(t)$ for $t\in [0,1]$, then we have
\begin{equation*}
 \mu \alpha^{\rho}[u]=\alpha^{\rho}[\gamma] H[u]+\int_{0}^{1} \mathcal{K}^{\rho}(s)g(s)f(s,u(s))\,ds\leq \alpha^{\rho}[\gamma] \alpha^{\rho}[u] +\int_{0}^{1} \mathcal{K}^{\rho}(s)g(s)f(s,u(s))\,ds.
\end{equation*}
This implies
\begin{equation}\label{EqB}
  \alpha^{\rho} [u]\leq   \frac{1}{\mu-\alpha^{\rho}[\gamma] }\int_{0}^{1} \mathcal{K}^{\rho}(s)g(s)f(s,u(s))\,ds\leq  \frac{1}{1-\alpha^{\rho}[\gamma] }\int_{0}^{1} \mathcal{K}^{\rho}(s)g(s)f(s,u(s))\,ds.
\end{equation}
Since we have, for $t\in [0,1]$,
\begin{align*}
\mu| u(t)|\leq & |\gamma(t)|\alpha^{\rho} [u]+\int_{0}^{1} |k(t,s)|g(s)f(s,u(s))\,ds,
\end{align*}
using \eqref{EqB}  we obtain
\begin{align*}
\mu |u(t)|&\leq
\rho f^{-\rho,\rho} \Bigl(\frac{|\gamma(t)|}{1-\alpha^{\rho}[\gamma]}\int_0^1\mathcal{K}^{\rho}(s)g(s)\,ds+\int_0^1|k(t,s)|g(s)\,ds\Bigr).
 \end{align*}
Taking the supremum for $t\in [0,1]$, as in Lemma \ref{ind11}, from \eqref{ind12change}  we have $\mu \rho< \rho$.
This contradicts the fact that $\mu\geq 1$ and proves the result.  \hfill
\end{proof}

Now, we give a  condition which implies that the index is $0$ on the set $V_{\rho}$.
\begin{lem} \label{ind 0 2 + -}
Assume that
\begin{enumerate}
\item[$(\mathrm{I}^{0}_{\rho})$] the hyphoteses of Lemma \ref{indice0*}   hold with condition \eqref{ind 0 2 +} replaced by
\begin{equation}\label{ind 0 2 change}
  f_{\rho,\rho/c}\Bigl(\inf_{t \in [a,b] }\Bigl\{\frac{\gamma(t)}{1-\alpha^{\rho}[\gamma]}\int_a^b\mathcal{K}^{\rho}(s)g(s)\,ds+\int_a^b k(t,s) g(s)\,ds\Bigr\} \Bigr)> 1,
\end{equation}
where
\begin{equation*}
 f_{\rho,\rho/c}:=\mathrm{ess }\inf\Bigl\{\frac{f(t,u)}{\rho},\,\, t\in [a,b],\,\,\,\rho\le
u \le \rho/c\Bigr\}.
\end{equation*}
\end{enumerate}
Then $i_{K}(T,V_{\rho})$ is $0$.
\end{lem}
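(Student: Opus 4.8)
The plan is to follow the proof of Lemma~\ref{indice0*} almost verbatim, the one genuine change being dictated by the weaker sign information: since $\gamma$ and $k(t,\cdot)$ are now only known to be non-negative for $t\in[a,b]$, all pointwise estimates must be performed on $[a,b]$ rather than on $[0,1]$. Taking the constant function $e(t)\equiv1$, which belongs to the cone~\eqref{cone3} because its minimum over $[a,b]$ coincides with its norm, I would prove that $u\neq Tu+\lambda e$ for every $u\in\partial V_{\rho}$ and every $\lambda\ge0$; by the index property~(1) recalled at the beginning of the section this forces $i_{K}(T,V_{\rho})=0$.

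Assume, for contradiction, that $u=Tu+\lambda e$ with $u\in\partial V_{\rho}$ and $\lambda\ge0$. Applying the positive functional $\alpha^{\rho}$ to this identity, using $H[u]\ge\alpha^{\rho}[u]$ together with $0\le\alpha^{\rho}[\gamma]<1$ and discarding the non-negative term $\lambda\,\alpha^{\rho}[e]$, I obtain, exactly as in~\eqref{Eq1},
\begin{equation*}
\alpha^{\rho}[u]\ge\frac{1}{1-\alpha^{\rho}[\gamma]}\int_{0}^{1}\mathcal{K}^{\rho}(s)g(s)f(s,u(s))\,ds .
\end{equation*}
Next I restrict the identity $u=Tu+\lambda e$ to $t\in[a,b]$, where $\gamma(t)\ge0$ and $k(t,s)\ge c_{1}\Phi(s)\ge0$, so that $\gamma(t)H[u]\ge\gamma(t)\alpha^{\rho}[u]$ and the $s$-integrals may be shrunk from $[0,1]$ to $[a,b]$ without increasing their value; substituting the displayed lower bound for $\alpha^{\rho}[u]$ then yields, for every $t\in[a,b]$,
\begin{equation*}
u(t)\ge\frac{\gamma(t)}{1-\alpha^{\rho}[\gamma]}\int_{a}^{b}\mathcal{K}^{\rho}(s)g(s)f(s,u(s))\,ds+\int_{a}^{b}k(t,s)g(s)f(s,u(s))\,ds .
\end{equation*}

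To conclude I would use that $u\in\partial V_{\rho}$ gives $\min_{[a,b]}u=\rho$ and, via $u\in K$, $\|u\|\le\rho/c$, whence $\rho\le u(s)\le\rho/c$ for all $s\in[a,b]$ and therefore $f(s,u(s))\ge\rho\,f_{\rho,\rho/c}$ a.e.\ on $[a,b]$. Inserting this bound and taking the infimum over $t\in[a,b]$ gives
\begin{equation*}
\rho=\min_{t\in[a,b]}u(t)\ge\rho\,f_{\rho,\rho/c}\Bigl(\inf_{t\in[a,b]}\Bigl\{\frac{\gamma(t)}{1-\alpha^{\rho}[\gamma]}\int_{a}^{b}\mathcal{K}^{\rho}(s)g(s)\,ds+\int_{a}^{b}k(t,s)g(s)\,ds\Bigr\}\Bigr),
\end{equation*}
and hypothesis~\eqref{ind 0 2 change} makes the right-hand side strictly greater than $\rho$, the sought contradiction. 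The only delicate point, and the sole place where the sign-changing setting really intervenes, is the passage from $[0,1]$ to $[a,b]$ in the $s$-integrals: this requires $\mathcal{K}^{\rho}(s)=\int_{0}^{1}k(t,s)\,dA^{\rho}(t)\ge0$, which holds because the positive measure $dA^{\rho}$ is supported where $k(t,\cdot)\ge c_{1}\Phi\ge0$ (the same feature that makes $\alpha^{\rho}$ map $K$ into $[0,+\infty)$). Once this is observed, the computation is formally identical to that of Lemma~\ref{indice0*}.
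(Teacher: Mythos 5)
Your proposal is correct and follows essentially the same route as the paper's proof: apply $\alpha^{\rho}$ to $u=Tu+\lambda e$, use $H[u]\ge\alpha^{\rho}[u]$ and $\alpha^{\rho}[\gamma]<1$ to bound $\alpha^{\rho}[u]$ from below, feed that back into the identity for $t\in[a,b]$, and take the infimum to contradict \eqref{ind 0 2 change}. The only cosmetic difference is the order of operations (you apply $\alpha^{\rho}$ to the full identity on $[0,1]$ and then shrink the $s$-integrals, whereas the paper restricts to $[a,b]$ first), and both versions rest on the same implicit positivity of $\mathcal{K}^{\rho}$ that you rightly single out.
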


\begin{proof}
Note that the constant function $e(t)\equiv 1$  for $t\in[0,1]$
belongs to $K$. If there exist $\lambda\geq 0$ and $u \in
\partial V_{\rho}$ such that $u=Tu+\lambda e$, then we have for
$t\in [a,b]$
\begin{align*}
   u(t)&\geq  \gamma(t)\alpha^{\rho}[u]+\int_{a}^{b}k(t,s)g(s)f(s,u(s))\,ds.
  \end{align*}
Thus we have
\begin{equation*}
\alpha^{\rho}[u]\geq \alpha^{\rho}[\gamma]\alpha^{\rho}[u]+\int_{a}^{b}
\mathcal{K}^{\rho}(s)g(s)f(s,u(s))\,ds.
\end{equation*}
This implies
\begin{equation}\label{EqC}
  \alpha^{\rho} [u]\geq  \frac{1}{1-\alpha^{\rho}[\gamma] }\int_{a}^{b} \mathcal{K}^{\rho}(s)g(s)f(s,u(s))\,ds.
\end{equation}
Using \eqref{EqC}  we obtain
\begin{align*}
 u(t)&\geq
  \rho f_{\rho,\rho/c}\Bigl(\frac{\gamma(t)}{1-\alpha^{\rho}[\gamma]}\int_a^b\mathcal{K}^{\rho}(s)g(s)\,ds+\int_a^b
k(t,s)g(s)\,ds\Bigr).
 \end{align*}
Taking the infimum for $t\in [a,b]$ and using condition \eqref{ind 0 2 change}, we have $\rho> \rho$, a contradiction.  \hfill
\end{proof}
A result similar to Theorem \ref{mult-equ} holds in this case,
providing the existence of \emph{nontrivial} solutions.

We conclude this Subsection with an application to a thermostat model.
\begin{ex}\label{example}
We  consider the BVP
\begin{equation}\label{eqD}
-u''(t)=f(t,u(t)),\ \, t \in (0,1),
\end{equation}
with BCs
\begin{equation}\label{eqE}
u'(0)+H[u]=0,\; \beta u'(1) + u(\eta)=0,\; {\eta}\in [0,1].
\end{equation}

One motivation for studying this type of BVP is that it occurs in
some heat flow problems. For example the BVP
$$
-u''(t)=f(t,u(t)),\quad u'(0)+\alpha u(\xi)=0,\ \beta u'(1)+u(\eta)=0,\ \xi,\eta
\in[0,1],
$$
models the stady-state of a heated bar of length 1, where two controllers at $t = 0$ and $t = 1$ add or remove heat
according to the temperatures detected by two sensors at $t = \xi$
and $t = \eta$.
Thermostat problems of this kind were studied by Infante and Webb in \cite{gijwnodea}, who were motivated by
 Guidotti
and Merino \cite{Guidotti}. Thermostat models with \emph{linear} controllers have been studied, for example in \cite{fama, df-gi-jp-prse, gijwnodea, gijwems,
 nipime, jwpomona, jwwcna04, webb-therm} and with \emph{nonlinear} controllers in
\cite{gi-caa, kamkee, kamkee2, kapala, nipime, palamides}.

By a
solution of the BVP \eqref{eqD}-\eqref{eqE} we mean a
solution $u\in C[0,1]$ of the corresponding integral equation
$$
u(t)=(\beta + \eta- t )H[u]
+\int_{0}^{1}k(t,s)f(s,u(s))ds,
$$
 where
\begin{equation*}
k(t,s)=\beta +\begin{cases} \eta-s,\ &s\leq \eta,\\0,\ &s>\eta,
\end{cases}
-\begin{cases} t-s,\ &s\leq t,\\ 0,\ &s>t.
\end{cases}
\end{equation*}

We consider the case $\beta>0$ and $\beta +\eta < 1$, that leads to the case of solutions that are positive on an interval $[0,b]$
for every $b$ with $0\leq a <b<\eta+\beta$. We note that  in
$[0,1]\times  [0,1]$ the kernel $k$ is not positive for
$$
\beta+ \eta\leq t\leq1\ \text{and}\ 0\leq s \leq t-\beta.
$$
Upper and lower bounds for $|k|$ and $\gamma$ can be found
for example in \cite{gipp cmuc, gijwnodea,gijwems} as follows
$$
\Phi (s)=\|\gamma\|=\begin{cases}
\beta +\eta, & \beta+\eta\geq \frac{1}{2}, \\
1 -(\beta + \eta ), & \beta+\eta< \frac{1}{2},
\end{cases}
$$
 $\min_{t\in [a,b]}\gamma
(t)=\beta+\eta-b$,
$$
\min_{t\in [a,b]} k(t,s)=k(b,s)\geq
\begin{cases}
\beta, & b\leq \eta, \\
\beta + \eta -b, & b> \eta.
\end{cases}
$$
and we can choose
\begin{equation}\label{c-bcE}
c=\begin{cases}
\beta / (\beta+\eta), & b\leq \eta,\  \beta+\eta\geq \frac{1}{2}, \\
\beta / (1 -(\beta + \eta )), & b\leq \eta,\ \beta+\eta< \frac{1}{2}, \\
(\beta + \eta -b) / (\beta+\eta), & b> \eta,\  \beta+\eta\geq \frac{1}{2},\\
(\beta + \eta -b)/(1 -(\beta + \eta )), & b> \eta,\ \beta+\eta< \frac{1}{2}.
\end{cases}
\end{equation}

Therefore we work in the cone
\begin{equation*}
K = \{u\in C[0,1], \; \min_{t\in [a,b]}u(t) \geq c \|u\|\},
\end{equation*}
with $c$ as in \eqref{c-bcE}.

Now we illustrate the growth conditions and consider the BVP
\begin{equation}\label{therm1}
-u''(t)=t^2\,u^2(t)+2|u(t)|+\frac{1}{10},\,\,\,\,t\in (0,1),
\end{equation}
\begin{equation}\label{therm2}
u'(0)+u^2(1/5)=0,\;\,\,\,\,  u'(1) + 4\,u(1/4)=0.
\end{equation}
Then with the choice $$[a,b]=[0,1/4],
\,\,\rho_1=\frac{1}{3},\,\,\rho_2=\frac{31}{10},
\,\,\alpha^{\rho_1}[u]=\frac{1}{2}u(1/5),\,\,\alpha^{\rho_2}[u]=3u(1/5),$$
we obtain:
\begin{itemize}
\item[] $c=1/2$, $\alpha^{\rho_1}[\gamma]=0.15<1$ and
$\alpha^{\rho_2}[\gamma]=0.9<1$,
\item[]$H[u]=u^2(1/5)\leq \frac{1}{2}u(1/5)=\alpha^{\rho_1}[u]$ for $c\rho_1\leq
u\leq\rho_1$,
\item[] $H[u]=u^2(1/5)\geq 3u(1/5)=\alpha^{\rho_2}[u]$ for $\rho_2\leq
u\leq\rho_2/c$,
\item[] $\sup\left\{f(t,u):\,\,t\in [0,1],\,\,u\in \left[-\rho_1,
\rho_1\right]\right\}=0.88<\frac{1}{3}\cdot 2.704$,
\item[] $\inf\left\{f(t,u):\,\,t\in [0,1/4],\,\,u\in \left[\rho_2,
\frac{\rho_2}{c}\right]\right\}=6.3>\frac{31}{10}\cdot 1.85$.
\end{itemize}
Thus the conditions $(\mathrm{I}^{1}_{\rho_1})$ of Lemma
\ref{ind12+-} and $(\mathrm{I}^{0}_{\rho_2})$ of Lemma \ref{ind 0
2 + -} are satisfied. It follows that there exists a nontrivial
solution $u \in {V}_{\rho_2}\setminus \overline{K}_{\rho_1}$ of
BVP \eqref{therm1}-\eqref{therm2} with
$$
-\rho_2/2=-31/5 \leq u(t)\leq 31/5=\rho_2/2\ \text{for }\ t\in
[0,1] \text{ and }
 u(t)\geq 1/6=c\rho_1 \text{ for }\
t\in \Bigl[0,\frac{1}{4}\Bigr].
$$
\end{ex}

\subsection{Non-existence results for perturbed Hammerstein integral equations}
We now prove some non-existence results  for the integral equation
\eqref{eqthamm}. We focus, for brevity, on the case when both the
function $\gamma$ and the kernel $k$ are allowed to change sign.

\begin{thm}
Assume that there exists
 a linear functional $\alpha[\cdot]:K\rightarrow [0,+\infty)$ given by
 $$\alpha[u]=\int_0^1 u(t)\,dA(t)$$ such that
\begin{itemize}
\item $dA$ is a positive Stieltjes measure,
\item $\alpha[\gamma]< 1$,
\item $H[u]\leq \alpha[u]$ for every $u\in K$,
\item $f(t,u)<m_{\alpha}|u|$ for every $t\in [0,1]$ and $u \in \R\setminus \{0\}$,
where
\begin{equation*}
 \frac{1}{m_{\alpha}}:=\sup_{t \in [0,1] }\left\{\frac{|\gamma(t)|}{1-\alpha[\gamma]}\int_0^1\mathcal{K}(s)g(s)\,ds+\int_0^1|k(t,s)|g(s)\,ds\right\},
\end{equation*}
and
\begin{equation*}
\mathcal{K}(s):=\int_0^1 k(t,s)\,dA(t).
\end{equation*}
\end{itemize}
Then the equation  \eqref{eqthamm} has at most the function zero
as solution  in $K$.
\end{thm}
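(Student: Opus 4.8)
The plan is to argue by contradiction, exactly in the spirit of Lemma~\ref{ind 0 2 + -} and Lemma~\ref{ind12+-}, but now using the strict inequality $f(t,u)<m_\alpha|u|$ to rule out \emph{any} nonzero fixed point rather than merely controlling the index on a fixed shell. Suppose $u\in K$ is a solution of \eqref{eqthamm} with $u\neq 0$. The first step is to extract the estimate that the functional $\alpha$ produces from the equation: starting from
$$
u(t)=\gamma(t)H[u]+\int_0^1 k(t,s)g(s)f(s,u(s))\,ds,
$$
I would apply $\alpha[\cdot]$ to both sides, use $H[u]\le\alpha[u]$ together with $\alpha[\gamma]<1$ and the definition $\mathcal{K}(s)=\int_0^1 k(t,s)\,dA(t)$, to obtain
$$
\alpha[u]\le\frac{1}{1-\alpha[\gamma]}\int_0^1\mathcal{K}(s)g(s)f(s,u(s))\,ds,
$$
precisely as in \eqref{EqB}. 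This is the same linear-functional manipulation already carried out in the preceding lemmas, so it should go through verbatim.

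The second step is to feed this bound back into the pointwise estimate for $|u(t)|$. Taking absolute values in the integral equation and using the previous display gives
$$
|u(t)|\le\Bigl(\frac{|\gamma(t)|}{1-\alpha[\gamma]}\int_0^1\mathcal{K}(s)g(s)\,ds+\int_0^1|k(t,s)|g(s)\,ds\Bigr)\,\mathrm{ess\,sup}_s\, f(s,u(s)).
$$
Here the delicate point is the handling of the nonlinearity. Rather than a ratio $f/\rho$ over a shell, I now invoke the global strict bound $f(s,u(s))<m_\alpha|u(s)|\le m_\alpha\|u\|$, valid wherever $u(s)\neq0$ (and $f(s,0)=0$ follows from the hypothesis by continuity, or is handled directly since $f\ge0$). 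Substituting and taking the supremum over $t\in[0,1]$, the bracketed supremum is exactly $1/m_\alpha$ by the definition of $m_\alpha$, so the two factors $m_\alpha$ and $1/m_\alpha$ cancel, yielding $\|u\|\le\|u\|$—but with a \emph{strict} inequality somewhere, I expect to reach $\|u\|<\|u\|$, which is the contradiction.

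The step I expect to require the most care is making the strict inequality survive the passage from the pointwise strict bound $f(t,u)<m_\alpha|u|$ to the global norm estimate, since taking suprema and essential suprema can turn strict inequalities into non-strict ones. The clean way around this is to bound $f(s,u(s))$ by $m_\alpha|u(s)|$ \emph{before} pulling out $\|u\|$, keeping the factor $|u(s)|$ inside the integral against $\mathcal{K}(s)g(s)$ and against $|k(t,s)|g(s)$; then at the point $t_0$ where $|u(t_0)|=\|u\|$ one still has a genuine strict inequality coming from the set of positive measure where $u(s)$ is close to its maximum, because $\int_a^b\Phi(s)g(s)\,ds>0$ guarantees the relevant weight is not degenerate. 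Assembling these, one obtains $\|u\|<\|u\|$, forcing $\|u\|=0$, i.e.\ $u\equiv0$, contrary to assumption. Hence zero is the only solution of \eqref{eqthamm} in $K$.
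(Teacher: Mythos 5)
Your proposal is correct and follows essentially the same route as the paper: apply $\alpha$ to the fixed-point equation, use $H[u]\le\alpha[u]$ and $\alpha[\gamma]<1$ to get $\alpha[u]\le\frac{1}{1-\alpha[\gamma]}\int_0^1\mathcal{K}(s)g(s)f(s,u(s))\,ds$, substitute back into the pointwise bound for $|u(t)|$, invoke $f(s,u(s))<m_\alpha|u(s)|\le m_\alpha\|u\|$, and take the supremum to reach $\nu<\nu$. Your extra care about where the strict inequality survives (keeping $|u(s)|$ inside the integral and using that the supremum of the continuous function $u$ is attained) is a point the paper glosses over, but it does not change the argument.
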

\begin{proof}
Suppose that  there exists $u \in K$ with $\|u\|=\nu>0$ such that  $u(t)=Tu(t)$ for $t\in [0,1]$. Then we have
\begin{equation*}
 \alpha[u]=\alpha[\gamma] H[u]+\int_{0}^{1} \mathcal{K}(s)g(s)f(s,u(s))\,ds\leq \alpha[\gamma] \alpha[u] +\int_{0}^{1} \mathcal{K}(s)g(s)f(s,u(s))\,ds.
\end{equation*}
This implies
\begin{equation}\label{EqH}
  \alpha [u] \leq  \frac{1}{1-\alpha[\gamma] }\int_{0}^{1} \mathcal{K}(s)g(s)f(s,u(s))\,ds.
\end{equation}
Using \eqref{EqH}  we obtain, for $t\in [0,1]$,
\begin{align*}
 |u(t)|& <  m_{\alpha}\nu \Bigl(\frac{|\gamma(t)|}{1-\alpha[\gamma]}\int_0^1\mathcal{K}(s)g(s)\,ds+\int_0^1|k(t,s)|g(s)\,ds \Bigr).
 \end{align*}
Taking the supremum for $t\in [0,1]$ gives $\nu< \nu$, a contradiction.  \hfill
\end{proof}

\begin{thm} \label{nonexi2}
Assume that there exists
 a linear functional $\alpha[\cdot]:K\rightarrow [0,+\infty)$ given by
 $$\alpha[u]=\int_0^1 u(t)\,dA(t)$$ such that
\begin{itemize}
\item $dA$ is a positive Stieltjes measure,
\item $\alpha[\gamma]< 1$,
\item $H[u]\geq \alpha[u]$ for every $u\in K$,
\item $f(t,u)>M_{\alpha} u$ for every $t\in [a,b]$ and $u \in (0,+\infty)$
where
\begin{equation*}
 \frac{1}{M_{\alpha}}:=\inf_{t \in [a,b] }\Bigl\{\frac{\gamma(t)}{1-\alpha[\gamma]}\int_a^b\mathcal{K}(s)g(s)\,ds+\int_a^b k(t,s) g(s)\,ds\Bigr\}.
\end{equation*}
\end{itemize}
Then the equation  \eqref{eqthamm} has at most the function zero
as solution  in $K$.
\end{thm}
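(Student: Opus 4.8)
The plan is to mirror the structure of the preceding non-existence theorem, Theorem in the previous display, adapting the argument from an upper bound on $f$ and an upper bound on $H$ to the reversed inequalities. I would argue by contradiction: suppose there exists a nontrivial $u \in K$, say with $\|u\| = \nu > 0$, satisfying $u = Tu$. The goal is to derive $\rho > \rho$ (or an analogous strict inequality) from the strict lower bound $f(t,u) > M_\alpha u$ on $[a,b]$, using the definition of $M_\alpha$ as an infimum over $[a,b]$.

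First I would apply the linear functional $\alpha$ to the fixed-point equation $u = Tu$. Since $u = \gamma H[u] + Fu$, linearity gives
\begin{equation*}
\alpha[u] = \alpha[\gamma]\,H[u] + \int_0^1 \mathcal{K}(s) g(s) f(s,u(s))\,ds.
\end{equation*}
Here the key is that because $dA$ is a positive measure and $H[u] \geq \alpha[u]$ for every $u \in K$, together with $\alpha[\gamma] \geq 0$ (as $\gamma$ need not be sign-definite, one must be slightly careful, but $\alpha[\gamma] < 1$ and positivity of the measure suffice to keep the direction of the inequality), I obtain
\begin{equation*}
\alpha[u] \geq \alpha[\gamma]\,\alpha[u] + \int_0^1 \mathcal{K}(s) g(s) f(s,u(s))\,ds,
\end{equation*}
and hence, using $\alpha[\gamma] < 1$,
\begin{equation*}
\alpha[u] \geq \frac{1}{1 - \alpha[\gamma]} \int_0^1 \mathcal{K}(s) g(s) f(s,u(s))\,ds.
\end{equation*}

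Next I would evaluate $u(t)$ for $t \in [a,b]$, where the cone condition guarantees $u(t) \geq c\|u\| > 0$ so that the hypothesis $f(t,u) > M_\alpha u$ applies with positive argument. Restricting the integral to $[a,b]$ (legitimate since $k \geq c_1 \Phi \geq 0$ and $f \geq 0$ there) and substituting the lower bound on $\alpha[u]$, I expect to reach
\begin{equation*}
u(t) > M_\alpha\,u(t) \Bigl(\frac{\gamma(t)}{1 - \alpha[\gamma]} \int_a^b \mathcal{K}(s) g(s)\,ds + \int_a^b k(t,s) g(s)\,ds\Bigr)
\end{equation*}
for $t \in [a,b]$. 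Taking the infimum over $[a,b]$ and invoking the definition of $1/M_\alpha$ then collapses the bracket, yielding $u(t) > u(t)$ at the point realizing the infimum, the desired contradiction.

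The main obstacle I anticipate is the bookkeeping needed to insert the pointwise bound $f(t,u(t)) > M_\alpha u(t)$ inside the integrals while keeping a single factor of $u$ on the outside rather than $u(s)$ inside; unlike the index lemmas, where $f^{c\rho,\rho}$ and the normalizing $\rho$ factor out cleanly as constants, here $u(s)$ varies inside the integral and cannot simply be pulled out. The resolution is that one does not factor $u$ out of the integral at all: one uses $f(s,u(s)) > M_\alpha u(s) \geq M_\alpha c \|u\|$ on $[a,b]$ to bound the integral below by a constant multiple of $\|u\|$, and simultaneously bounds $u(t)$ above by $\|u\| = \nu$ on the left, so the contradiction emerges as $\nu \geq (\text{something}) > \nu$ after applying the infimum. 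Care must be taken that the strict inequality survives the infimum (it does, since the infimum is attained or approached with $f$ strictly above $M_\alpha u$ uniformly), and that $\alpha[u] > 0$, which follows because $u$ is positive on $[a,b]$ and $dA$ is a positive measure with the relevant support assumption implicit in the setup.
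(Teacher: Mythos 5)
Your overall skeleton is the right one and matches the paper up to the last step: apply $\alpha$ to $u=Tu$, use $H[u]\ge\alpha[u]$ and $\alpha[\gamma]\in[0,1)$ to get $\alpha[u]\ge\frac{1}{1-\alpha[\gamma]}\int_a^b\mathcal{K}(s)g(s)f(s,u(s))\,ds$, substitute back into the pointwise expression for $u(t)$ on $[a,b]$, and let the infimum defining $M_\alpha$ collapse the bracket. (Your passage through $\int_0^1\mathcal{K}\,g\,f$ rather than $\int_a^b$ is harmless, since $k(\cdot,s)\in K$ implies $\mathcal{K}(s)\ge 0$; the paper avoids even this by restricting to $[a,b]$ at the pointwise stage, where $k(t,s)\ge c_1\Phi(s)\ge 0$ for all $s$.)

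The genuine gap is in your proposed resolution of the ``$u(s)$ inside the integral'' obstacle: you gauge everything by $\nu=\|u\|$, bounding $f(s,u(s))>M_\alpha u(s)\ge M_\alpha c\nu$ below and $u(t)\le\nu$ above. After taking the infimum over $[a,b]$ this yields only $\nu> M_\alpha c\nu\cdot\frac{1}{M_\alpha}=c\nu$, which is \emph{not} a contradiction because $c\le 1$; the factor $c$ is irretrievably lost and the argument does not close. The paper's proof instead gauges by $\theta:=\min_{t\in[a,b]}u(t)>0$: for $s\in[a,b]$ one has $u(s)\ge\theta$, hence $f(s,u(s))>M_\alpha\theta$, hence $u(t)>M_\alpha\theta\bigl(\frac{\gamma(t)}{1-\alpha[\gamma]}\int_a^b\mathcal{K}(s)g(s)\,ds+\int_a^b k(t,s)g(s)\,ds\bigr)$ for every $t\in[a,b]$; taking the minimum over $[a,b]$ (attained, since $u$ is continuous) gives $\theta>M_\alpha\theta\cdot\frac{1}{M_\alpha}=\theta$. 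The point you missed is that this theorem is the ``index-$0$'' companion (compare Lemma~\ref{ind 0 2 + -}), where the correct normalizing quantity is $\min_{[a,b]}u$ and the relevant operation is an infimum over $[a,b]$ on \emph{both} sides; you imported the gauge $\|u\|$ from the preceding ``index-$1$''-flavoured non-existence theorem, where the supremum over $[0,1]$ does recover $\nu$ exactly. With $\theta$ in place of $\nu$ your argument becomes the paper's.
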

\begin{proof}
Suppose that there exists  $u \in K$ with $\displaystyle
\min_{t\in [a,b]}u(t)=\theta>0$ such that $u=Tu$. Then we have, for
 $t\in [a,b]$,
\begin{align*}
   u(t)=&\gamma(t)H[u]+\int_{0}^{1} k(t,s)g(s)f(s,u(s))\,ds
   \geq  \gamma(t)\alpha[u]+\int_{a}^{b}k(t,s)g(s)f(s,u(s))\,ds.
  \end{align*}
Thus we have
\begin{equation*}
\alpha[u]\geq \alpha[\gamma] \alpha[u]+\int_{a}^{b} \mathcal{K}(s)g(s)f(s,u(s))\,ds.
\end{equation*}
This implies
\begin{equation}\label{EqI}
  \alpha[u]\geq  \frac{1}{1-\alpha[\gamma] }\int_{a}^{b} \mathcal{K}(s)g(s)f(s,u(s))\,ds.
\end{equation}
Using \eqref{EqI}  we obtain
\begin{align*}
 u(t)&>  M_{\alpha}\theta
\Bigl(\frac{\gamma(t)}{1-\alpha[\gamma]}\int_a^b\mathcal{K}(s)g(s)\,ds+\int_a^b
k(t,s) g(s)\,ds \Bigr).
 \end{align*}
Taking the infimum for $t\in [a,b]$ we have
 $\theta> \theta$, a contradiction.
\end{proof}
\begin{ex}
As in the example \ref{example}, consider the BVP
\begin{equation}\label{noexisex}
-u''(t)=\lambda(\sqrt{u(t)}+u^2(t)),\,\,\,\,t\in (0,1),\,\,\,u'(0)+u^2(1/5)=0,\;\,\,\,\,  u'(1) + 4\,u(1/4)=0.
\end{equation}
Choosing $\alpha[u]\equiv 0$, we have $M_\alpha=16$. Then, by Theorem \ref{nonexi2}, the BVP\eqref{noexisex} has no solution in $K$ for $\lambda> 2^{14/3}/3$.
\end{ex}

\section{Existence and non-existence results for systems of perturbed integral equations}\label{systems}
In this Section we develop an existence theory for multiple
nontrivial solutions of the system of perturbed Hammerstein integral
equations of the type
\begin{gather}
\begin{aligned}\label{syst}
u(t)=\sum_{j=1,2}\gamma_{1j}(t)H_{1j}[u,v]+F_1(u,v)(t), \\
v(t)= \sum_{j=1,2}\gamma_{2j}(t)H_{2j}[u,v]+F_2(u,v)(t),
\end{aligned}
\end{gather}
where
\begin{equation*}
F_i(u,v)(t):=\int_{0}^{1}k_i(t,s)g_i(s)f_i(s,u(s),v(s))\,ds
\end{equation*}
and $H_{ij}$ are compact  functionals not necessarily linear.
Systems of perturbed Hammerstein integral equations were studied in a number of papers, see for example \cite{df-gi-do, Goodrich2, Goodrich1, gifmpp-cnsns, gipp-ns, gipp-nonlin,  kang-wei, kejde, ya1} and references therein.

We state some assumptions on the terms that occur in the system~\eqref{syst}.
\begin{itemize}
\item For every $i=1,2$, $f_i: [0,1]\times \mathbb{R}\times \mathbb{R} \to
[0,+\infty)$ satisfies Carath\'{e}odory conditions, that is,
$f_i(\cdot,u,v)$ is measurable for each fixed $(u,v)\in
\mathbb{R}\times \mathbb{R}$, $f_i(t,\cdot,\cdot)$ is continuous
for a.e. $t\in [0,1]$, and for each $r>0$ there exists $\phi_{i,r}
\in L^{\infty}[0,1]$ such that{}
\begin{equation*}
f_i(t,u,v)\le \phi_{i,r}(t) \;\text{ for } \; u,v\in
[-r,r]\;\text{ and a.e.} \; t\in [0,1].
\end{equation*}%
{}
\item For every $i=1,2$, $k_i:[0,1]\times [0,1]\to \mathbb{R}$ is
measurable, and for every $\tau\in [0,1]$ we have
\begin{equation*}
\lim_{t \to \tau} |k_i(t,s)-k_i(\tau,s)|=0 \;\text{ for a.e.}\, s
\in [0,1].
\end{equation*}%
{}
\item For every $i=1,2$, there exist a subinterval $[a_i,b_i] \subseteq
[0,1] $, a function $\Phi_i \in L^{\infty}[0,1]$ and a constant
$\tilde{c}_{i} \in (0,1]$, such that
\begin{align*}
|k_i(t,s)|\leq \Phi_i(s) \text{ for } &t \in [0,1] \text{ and a.e.}\, s\in [0,1], \\
k_i(t,s) \geq \tilde{c}_{i}\Phi_i(s) \text{ for } &t\in [a_i,b_i]
\text{ and a.e.} \, s \in [0,1].
\end{align*}%
{}
\item For every $i=1,2$, $g_i\,\Phi_i \in L^1[0,1]$, $g_i \geq 0$ a.e. and $%
\int_{a_i}^{b_i} \Phi_i(s)g_i(s)\,ds >0$.
\item  For every $i,j=1,2$, $\gamma_{ij} \in C[0,1]$  and
there exists $c_{ij} \in(0,1]$ such that
\begin{equation*}
\gamma _{ij}(t)\geq c_{ij}\| \gamma _{ij}\|\;\text{for every%
}\;t\in \lbrack a_{i},b_{i}].
\end{equation*}%
\end{itemize}
Due to the hypotheses above, we work in the space $C[0,1]\times C[0,1]$ endowed with the norm $\|
(u,v)\| :=\max \{\| u\| ,\| v\| \}$ (with an abuse of notation). We use the cone $K$ in
$C[0,1]\times C[0,1]$ defined by
\begin{equation*}
\begin{array}{c}
K:=\{(u,v)\in \tilde{K_{1}}\times \tilde{K_{2}}\},
\end{array}
\end{equation*}
where for $i=1,2$
\begin{equation*}
\tilde{K_{i}}:=\{w\in C[0,1]:\,\,\min_{t\in [a_{i},b_{i}]}w(t)\geq
{c_{i}}\| w\|\},
\end{equation*}%
whit $c_{i}=\min \{\tilde{c}_{i},c_{i1},c_{i2}\}$. We assume that
\begin{itemize}
\item  For every $i,j=1,2$, $H_{ij}: K\to [0,+\infty)$ is a compact functional.
\end{itemize}

For a \emph{nontrivial} solution of the system \eqref{syst} we mean a solution
$(u,v)\in K$ of \eqref{syst} such that $\|(u,v)\|>0$.

Under our assumptions, it is possible to show that the integral operator
\begin{gather*}
\begin{aligned}
T(u,v)(t):=
\left(
\begin{array}{c}
\displaystyle\sum_{j=1,2}\gamma_{1j}(t)H_{1j}[u,v]+F_1(u,v)(t) \\
\displaystyle\sum_{j=1,2}\gamma_{2j}(t)H_{2j}[u,v]+F_2(u,v)(t)%
\end{array}
\right)
:=
\left(
\begin{array}{c}
T_1(u,v)(t) \\
T_2(u,v)(t)%
\end{array}
\right)
\end{aligned}
\end{gather*}
leaves the cone $K$ invariant and is compact, see for example Lemma 1 of~\cite{gifmpp-cnsns}.
We use the following (relative) open bounded sets in $K$:
\begin{equation*}
K_{\rho_1,\rho_2} = \{ (u,v) \in K : \|u\|< \rho_1\ \text{and}\ \|v\|< \rho_2\}
\end{equation*}
and
\begin{equation*}
V_{\rho_1,\rho_2} =\{(u,v) \in K: \min_{t\in [a_1,b_1]}u(t)<\rho_1\ \text{and}\ \min_{t\in
[a_2,b_2]}v(t)<\rho_2\}.
\end{equation*}
 The set $V_{\rho,\rho}$ (in the context of systems) was introduced by the authors in~\cite{gipp-ns} and is equal to the set
called $\Omega^{\rho /c}$ in~\cite{df-gi-do}, an extension to the
case of systems of a set given by Lan \cite{lan}.

For our index calculations we make use of the following Lemma, similar to Lemma $5$ of \cite{df-gi-do}. We use  different radii, in the spirit of the papers~\cite{chzh2, gipp-nodea}. This choice allows more freedom in the growth of the nonlinearities. The proof of the Lemma is similar to the corresponding one in~\cite{df-gi-do} and is omitted.

\begin{lem} 
The sets $K_{\rho_1,\rho_2}$ and $V_{\rho_1,\rho_2}$ have the
following properties:
\begin{enumerate}
\item $K_{\rho_1,\rho_2}\subset V_{\rho_1,\rho_2}\subset K_{\rho_1/c_1,\rho_2/c_2}$.
\item $(w_1,w_2) \in \partial V_{\rho_1,\rho_2}$ \; iff \; $(w_1,w_2)\in K$, $\displaystyle
\min_{t\in [a_i,b_i]} w_i(t)= \rho_i$ for some $i\in \{1,2\}$ and $\displaystyle
\min_{t\in [a_j,b_j]}w_j(t)
\le \rho_j$ for $j\neq i$.
\item If $(w_1,w_2) \in \partial V_{\rho_1,\rho_2}$, then for some $i\in\{1,2\}$ $\rho_i \le w_i(t) \le \rho_i/c_i$
 for each $t \in [a_i,b_i]$ and for $j\neq i$ we have $0 \leq w_j(t) \leq \rho_j/c_j$ for each $t\in [a_j,b_j]$.
 \item $(w_1,w_2) \in \partial K_{\rho_1,\rho_2}$ \; iff \; $(w_1,w_2)\in K$, $\displaystyle
\|w_i\|= \rho_i$ for some $i\in \{1,2\}$ and $\displaystyle
\|w_j\| \le \rho_j$ for $j\neq i$.
\end{enumerate}
\end{lem}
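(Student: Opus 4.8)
The plan is to verify the four properties by unwinding the definitions of $K_{\rho_1,\rho_2}$ and $V_{\rho_1,\rho_2}$, reducing everything to the one-dimensional inclusions $K_{\rho}\subset V_{\rho}\subset K_{\rho/c}$ already established for the component cones $\tilde K_i$. For property (1) I would first note that $(u,v)\in K_{\rho_1,\rho_2}$ means $u\in \tilde K_1$ with $\|u\|<\rho_1$ and $v\in \tilde K_2$ with $\|v\|<\rho_2$; since for $w\in\tilde K_i$ one has $\min_{t\in[a_i,b_i]}w(t)\le\|w\|$, the bound $\|u\|<\rho_1$ forces $\min_{[a_1,b_1]}u<\rho_1$ and likewise for $v$, giving $(u,v)\in V_{\rho_1,\rho_2}$. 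For the second inclusion, if $(u,v)\in V_{\rho_1,\rho_2}$ then the cone inequality $\min_{[a_i,b_i]}w\ge c_i\|w\|$ yields $\|u\|\le \min_{[a_1,b_1]}u/c_1<\rho_1/c_1$ and similarly $\|v\|<\rho_2/c_2$, so $(u,v)\in K_{\rho_1/c_1,\rho_2/c_2}$.

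For properties (2) and (4) I would characterise the relative boundaries. The key observation is that $V_{\rho_1,\rho_2}$ is defined by the \emph{conjunction} of the two open conditions $\min_{[a_1,b_1]}u<\rho_1$ and $\min_{[a_2,b_2]}v<\rho_2$, so the map $(u,v)\mapsto(\min_{[a_1,b_1]}u,\min_{[a_2,b_2]}v)$ sends $V_{\rho_1,\rho_2}$ to the open rectangle $(-\infty,\rho_1)\times(-\infty,\rho_2)$ intersected with the image of $K$. A point of $K$ lies on the relative boundary precisely when at least one coordinate equals its threshold while neither exceeds it; since these minima are continuous functionals of $(u,v)$, a boundary point is one where $\min_{[a_i,b_i]}w_i=\rho_i$ for some $i$ and $\min_{[a_j,b_j]}w_j\le\rho_j$ for the other index $j$. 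This is exactly statement (2). The argument for (4) is the same with $\|w_i\|$ in place of $\min_{[a_i,b_i]}w_i$, using continuity of the norm.

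For property (3) I would fix the index $i$ with $\min_{[a_i,b_i]}w_i=\rho_i$ supplied by (2). On the one hand $w_i\in\tilde K_i$ gives $\|w_i\|\le \min_{[a_i,b_i]}w_i/c_i=\rho_i/c_i$, and on $[a_i,b_i]$ every value of $w_i$ is at least $\min_{[a_i,b_i]}w_i=\rho_i$ and at most $\|w_i\|\le\rho_i/c_i$, so $\rho_i\le w_i(t)\le\rho_i/c_i$ there. On the other hand, for $j\ne i$ property (2) gives only $\min_{[a_j,b_j]}w_j\le\rho_j$; combined with $w_j\ge 0$ on $[a_j,b_j]$ (a consequence of membership in $\tilde K_j$, whose nonnegativity on the reference interval follows from the cone inequality and $c_j>0$) and the bound $\|w_j\|\le\min_{[a_j,b_j]}w_j/c_j$, one would like $w_j(t)\le\rho_j/c_j$. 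The care needed here is that $\min_{[a_j,b_j]}w_j$ could be strictly less than $\rho_j$, so the cleanest route is to observe $0\le w_j(t)\le\|w_j\|\le\min_{[a_j,b_j]}w_j/c_j\le\rho_j/c_j$ on $[a_j,b_j]$, which gives exactly the claimed two-sided estimate.

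The only genuine subtlety, and the step I would treat most carefully, is the boundary characterisations (2) and (4): one must confirm that taking the minimum over $[a_i,b_i]$ (respectively the supremum norm) is continuous on $K$ so that the relative boundary of the region cut out by two strict inequalities is the set where at least one becomes an equality and neither is violated. Everything else is a direct substitution into the cone inequalities $\min_{[a_i,b_i]}w_i\ge c_i\|w_i\|$ and requires no estimation beyond the elementary bound $\min\le\|\cdot\|$.
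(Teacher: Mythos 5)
Your argument for (1) and (3) is complete and correct, and it is the standard unwinding of the definitions; the paper itself omits the proof of this lemma (deferring to Lemma 5 of the cited paper of Franco, Infante and O'Regan), so there is no written proof to compare against, but what you do is exactly what that reference does. One point needs tightening in (2) and (4). These are ``if and only if'' statements, and continuity of $(u,v)\mapsto\min_{[a_i,b_i]}u$ (resp.\ of the norm) only gives you \emph{one} direction: since $V_{\rho_1,\rho_2}$ is open in $K$ and its closure is contained in $\{\min_{[a_1,b_1]}w_1\le\rho_1\}\cap\{\min_{[a_2,b_2]}w_2\le\rho_2\}$, every boundary point satisfies at least one equality and neither strict reverse inequality. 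The converse --- that a point of $K$ with $\min_{[a_i,b_i]}w_i=\rho_i$ and $\min_{[a_j,b_j]}w_j\le\rho_j$ actually lies in $\overline{V_{\rho_1,\rho_2}}$ --- is \emph{not} a formal consequence of continuity (for a general continuous functional $f$, the set $\{f=\rho\}$ need not touch the closure of $\{f<\rho\}$). Here it follows from an approximation argument that exploits the cone structure: $(\lambda w_1,\lambda w_2)\in K$ for $\lambda\in(0,1)$, the functionals are positively homogeneous and nonnegative on $K$, and $\rho_1,\rho_2>0$, so $(\lambda w_1,\lambda w_2)\in V_{\rho_1,\rho_2}$ for $\lambda<1$ and $(\lambda w_1,\lambda w_2)\to(w_1,w_2)$ as $\lambda\to1^-$. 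The same remark applies verbatim to (4) with the norm in place of the minimum. This is a one-line repair, but as written your general principle about ``the relative boundary of a region cut out by two strict inequalities'' is stated more broadly than it is true.
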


We utilize the following results from~\cite{jw-gi-jlms} regarding order preserving matrices:
\begin{defn}
A $\;2 \times 2$ matrix $\mathcal{Q}$ is said to be order
preserving (or positive) if $p_{1}\geq p_{0}$, $q_{1}\geq q_{0}$
imply
\begin{equation*}
\mathcal{Q}
\begin{pmatrix}
  p_{1} \\
  q_{1}
\end{pmatrix}%
\geq \mathcal{Q}
\begin{pmatrix}
  p_{0} \\
  q_{0}
\end{pmatrix},
\end{equation*}
in the sense of components.
\end{defn}
\begin{lem}\label{lematrix2}
Let
\begin{equation*}
\mathcal{Q}=
\begin{pmatrix}
  a & -b \\
  -c & d
\end{pmatrix}
\end{equation*}
with $a,b,c,d\geq 0$ and $\det \mathcal{Q}> 0$. Then
$\mathcal{Q}^{-1}$ is order preserving.
\end{lem}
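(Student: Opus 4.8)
The plan is to reduce the notion of order preservation to a statement about the signs of the entries of the inverse matrix, and then to compute that inverse explicitly. First I would record the elementary characterization that a $2\times 2$ matrix $\mathcal{P}$ is order preserving if and only if all of its entries are nonnegative. By linearity, the requirement that $\mathcal{P}\binom{p_1}{q_1}\geq \mathcal{P}\binom{p_0}{q_0}$ whenever $p_1\geq p_0$ and $q_1\geq q_0$ is equivalent to $\mathcal{P}\binom{x}{y}\geq 0$ componentwise for every $x,y\geq 0$, where one sets $x=p_1-p_0$ and $y=q_1-q_0$. Testing this on the vectors $\binom{1}{0}$ and $\binom{0}{1}$ forces each entry of $\mathcal{P}$ to be nonnegative, while conversely nonnegativity of the entries makes $\mathcal{P}$ send the nonnegative quadrant into itself.

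With this characterization available, the second step is simply to compute $\mathcal{Q}^{-1}$ via the cofactor formula. Since $\det \mathcal{Q}=ad-bc>0$ by hypothesis, $\mathcal{Q}$ is invertible and
\[
\mathcal{Q}^{-1}=\frac{1}{ad-bc}\begin{pmatrix} d & b \\ c & a \end{pmatrix},
\]
where the off-diagonal signs of $\mathcal{Q}$ have disappeared because the two minus signs in $-b$ and $-c$ are cancelled by the two sign changes in the adjugate.

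The third step is to read off the signs. The scalar $1/(ad-bc)$ is strictly positive precisely because $\det\mathcal{Q}>0$, and each of $a,b,c,d$ is nonnegative by assumption; hence every entry of $\mathcal{Q}^{-1}$ is nonnegative. By the characterization established in the first step, $\mathcal{Q}^{-1}$ is therefore order preserving, which is exactly the claim.

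I do not expect any genuine obstacle here: the only point demanding a little care is the bookkeeping in the first step, namely verifying both directions of the equivalence between order preservation and nonnegativity of the entries. Everything else is a direct calculation, and the role of the hypothesis $\det\mathcal{Q}>0$ is exactly to secure simultaneously the existence of the inverse and the positivity of the prefactor $1/(ad-bc)$, so that the nonnegative off-diagonal magnitudes $b,c$ survive as genuine nonnegative entries of $\mathcal{Q}^{-1}$.
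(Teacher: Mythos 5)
Your argument is correct: the reduction of order preservation to nonnegativity of the entries (by linearity, testing on the increments $x=p_1-p_0\geq 0$, $y=q_1-q_0\geq 0$, i.e. on the standard basis vectors) is sound, and the adjugate computation
$\mathcal{Q}^{-1}=\frac{1}{ad-bc}\begin{pmatrix} d & b \\ c & a \end{pmatrix}$
is right, with the hypothesis $\det\mathcal{Q}=ad-bc>0$ doing exactly the double duty you describe. The paper itself gives no proof of this lemma --- it is quoted from the earlier work of Webb and Infante (\emph{J.\ London Math.\ Soc.}, 2006) --- so there is nothing to compare against beyond noting that your argument is the standard one and fills in the omitted details correctly.
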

\begin{rem} \label{rem1}
It is a consequence of Lemma \ref{lematrix2} that if
 \begin{equation*}
\mathcal{N}=
\begin{pmatrix}
  1-a & -b\\
  -c & 1-d
\end{pmatrix},
\end{equation*}
 satisfies the hypotheses of Lemma \ref{lematrix2},
$p \geq 0,\, q \geq 0$ and $\mu>1$ then
$$\mathcal{N}_\mu^{-1}\begin{pmatrix}
  p \\
  q
\end{pmatrix}\leq \mathcal{N}^{-1}\begin{pmatrix}
  p \\
  q
\end{pmatrix},$$
where
 \begin{equation*}
\mathcal{N}_\mu=
\begin{pmatrix}
  \mu-a & -b\\
  -c & \mu-d
\end{pmatrix}.
\end{equation*}
\end{rem}

With these tools we are able to prove a result concerning the set $K_{\rho_1,\rho_2}$.
\begin{lem}\label{sysind1}
Assume that
\begin{enumerate}
\item[$(\mathrm{I}_{\rho_1,\rho_2 }^{1})$]  there exist $\rho_1,\rho_2 >0$,  linear functionals $\alpha^{\rho_1,\rho_2}_{ij1}[\cdot]:\tilde{K_{1}}\rightarrow [0,+\infty)$, $\alpha^{\rho_1,\rho_2}_{ij2}[\cdot]:
  \tilde{K_{2}}\rightarrow [0,+\infty)$ given by
$$
\alpha^{\rho_1,\rho_2}_{ijl}[u]=\int_0^1 u(t)\,dA_{ijl}(t)
$$
 such that
\begin{itemize}
\item for $i,j,l=1,2,$ $dA_{ijl}$ is a positive Stieltjes measure,
\item for $i,j,l=1,2,$ $\alpha^{\rho_1,\rho_2}_{ijl}[\gamma_{ij}]<1$,
\item  for $i=1,2,$ $D_i:=(1-\alpha^{\rho_1,\rho_2}_{i1i}[\gamma_{i1}])(1-\alpha^{\rho_1,\rho_2}_{i2i}[\gamma_{i2}])
-\alpha^{\rho_1,\rho_2}_{i1i}[\gamma_{i2}]\alpha^{\rho_1,\rho_2}_{i2i}[\gamma_{i1}]>
0$,
\item for $i,j=1,2,$ $H_{ij}[u,v]\leq \alpha^{\rho_1,\rho_2}_{ij1}[u]+\alpha^{\rho_1,\rho_2}_{ij2}[v]$ for any $(u,v)\in \partial
K_{\rho_1,\rho_2}\,$,
\item for $i,j,l=1,2,$ with $l\not=i,$ the following inequality holds:
\begin{multline}\label{ind1s}
 f_i^{\rho_1,\rho_2} \Bigl( \Bigl(\| \gamma _{i1}\| \theta_{i1}+
\| \gamma _{i2}\| \theta_{i3} \Bigr)\int_0^1 \mathcal{K}_{i1i}(s)g_i(s)\,ds\\
+ \Bigl(\| \gamma _{i1}\|\theta_{i2}+
\| \gamma _{i2}\|\theta_{i4} \Bigr)\int_0^1 \mathcal{K}_{i2i}(s)g_i(s)\,ds +\dfrac{1}{m_i}\Bigr)\\
+\| \gamma _{i1}\|(\theta_{i1} Q_i+\theta_{i2} S_i)+ \| \gamma
_{i2}\|(\theta_{i3} Q_i+\theta_{i4}
S_i)+\frac{\rho_{l}}{\rho_i}\sum_{j=1,2}
||\gamma_{ij}||\alpha^{\rho_1,\rho_2}_{ijl}[1]  <1,
\end{multline}{}
where
\begin{align*}
f_{i}^{\rho_1,\rho_2}:=&\mathrm{ess }\sup
\Bigl\{\frac{f_{i}(t,u,v)}{\rho_i }:\;(t,u,v)\in
\lbrack 0,1]\times [ -\rho_1,\rho_1 ]\times [ -\rho_2,\rho_2 ]\Bigr\},\\
\frac{1}{m_{i}}:=&\sup_{t\in \lbrack 0,1]}\int_{0}^{1}|k_{i}(t,s)|g_{i}(s)\,ds,\,\,\,\mathcal{K}_{iji}(s):=\int_0^1 k_i(t,s) \,dA_{iji}(t),\\
Q_i:=&\frac{\rho_{l}}{\rho_i}\sum_{j=1,2}\alpha^{\rho_1,\rho_2}_{i1i}\left[\gamma_{ij}\right]\alpha^{\rho_1,\rho_2}_{ijl}
[1],\,\,
  S_i:=  \frac{\rho_{l}}{\rho_i}\sum_{j=1,2}\alpha^{\rho_1,\rho_2}_{i2i}\left[\gamma_{ij}\right]\alpha^{\rho_1,\rho_2}_{ijl}
[1],\\
\theta_{i1}:=&\frac{1-\alpha^{\rho_1,\rho_2}_{i2i}[\gamma_{i2}]}{D_i},\,\,
  \theta_{i2}:=\frac{ \alpha^{\rho_1,\rho_2}_{i1i}[\gamma_{i2}]}{D_i} ,\,\, \theta_{i3}:=\frac{\alpha^{\rho_1,\rho_2}_{i2i}[\gamma_{i1}]}{D_i},\,\,
 \theta_{i4}:=\frac{1-\alpha^{\rho_1,\rho_2}_{i1i}[\gamma_{i1}]}{D_i}.
\end{align*}
\end{itemize}
\end{enumerate}
Then  $i_{K}(T,K_{\rho_1,\rho_2})$ is equal to 1.
\end{lem}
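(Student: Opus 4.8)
The plan is to invoke property (2) of the fixed point index lemma: it suffices to prove that $\mu (u,v) \neq T(u,v)$ for every $(u,v) \in \partial K_{\rho_1,\rho_2}$ and every $\mu \geq 1$, which then forces $i_K(T, K_{\rho_1,\rho_2}) = 1$. Arguing by contradiction, suppose $\mu (u,v) = T(u,v)$ for some such $\mu$ and $(u,v)$. Writing the system componentwise, for $i = 1,2$ and $t \in [0,1]$ we have $\mu u_i(t) = \gamma_{i1}(t) H_{i1}[u,v] + \gamma_{i2}(t) H_{i2}[u,v] + F_i(u,v)(t)$, where $u_1 = u$ and $u_2 = v$. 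I fix the index $i$ for which $\|u_i\| = \rho_i$; such an $i$ exists by the characterisation of $\partial K_{\rho_1,\rho_2}$ established above, and I set $l = 3 - i$, so that $\|u_l\| \leq \rho_l$.

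The heart of the proof is to extract a bound on the same-component functionals $\alpha^{\rho_1,\rho_2}_{i1i}[u_i]$ and $\alpha^{\rho_1,\rho_2}_{i2i}[u_i]$ by solving a $2\times 2$ linear system, exactly as in the scalar Lemma \ref{ind11} but now coupled. Applying $\alpha^{\rho_1,\rho_2}_{i1i}[\cdot]$ and $\alpha^{\rho_1,\rho_2}_{i2i}[\cdot]$ to the $i$-th equation, using $H_{ij}[u,v] \leq \alpha^{\rho_1,\rho_2}_{iji}[u_i] + \alpha^{\rho_1,\rho_2}_{ijl}[u_l]$ together with $\alpha^{\rho_1,\rho_2}_{ijl}[u_l] \leq \|u_l\|\,\alpha^{\rho_1,\rho_2}_{ijl}[1] \leq \rho_l\,\alpha^{\rho_1,\rho_2}_{ijl}[1]$, and recalling that $\alpha^{\rho_1,\rho_2}_{iji}[F_i(u,v)] = \int_0^1 \mathcal{K}_{iji}(s)g_i(s)f_i(s,u(s),v(s))\,ds =: I_{ij}$, one arrives at the vector inequality $\mathcal{N}_{i,\mu}\,(\alpha^{\rho_1,\rho_2}_{i1i}[u_i],\alpha^{\rho_1,\rho_2}_{i2i}[u_i])^{\top} \leq (\rho_i Q_i + I_{i1},\, \rho_i S_i + I_{i2})^{\top}$, where $\mathcal{N}_{i,\mu}$ is the matrix $\mathcal{N}_\mu$ of Remark \ref{rem1} with entries $a = \alpha^{\rho_1,\rho_2}_{i1i}[\gamma_{i1}]$, $b = \alpha^{\rho_1,\rho_2}_{i1i}[\gamma_{i2}]$, $c = \alpha^{\rho_1,\rho_2}_{i2i}[\gamma_{i1}]$, $d = \alpha^{\rho_1,\rho_2}_{i2i}[\gamma_{i2}]$, and the cross terms collapse precisely into $\rho_i Q_i$ and $\rho_i S_i$. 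Since $D_i > 0$, the matrix $\mathcal{N}_i$ satisfies the hypotheses of Lemma \ref{lematrix2}, and as the right-hand vector is non-negative and $\mu \geq 1$, Remark \ref{rem1} yields $(\alpha^{\rho_1,\rho_2}_{i1i}[u_i],\alpha^{\rho_1,\rho_2}_{i2i}[u_i])^{\top} \leq \mathcal{N}_i^{-1}(\rho_i Q_i + I_{i1},\, \rho_i S_i + I_{i2})^{\top}$, whose entries are exactly the $\theta_{ik}$ combinations appearing in \eqref{ind1s}.

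With these bounds in hand I would return to the pointwise identity, pass to absolute values to obtain $\mu|u_i(t)| \leq |\gamma_{i1}(t)|H_{i1}[u,v] + |\gamma_{i2}(t)|H_{i2}[u,v] + \int_0^1 |k_i(t,s)|g_i(s)f_i(s,u(s),v(s))\,ds$, and again use $H_{ij}[u,v] \leq \alpha^{\rho_1,\rho_2}_{iji}[u_i] + \rho_l\,\alpha^{\rho_1,\rho_2}_{ijl}[1]$ together with the estimate for $\alpha^{\rho_1,\rho_2}_{iji}[u_i]$ from the previous step. Bounding $f_i(s,u(s),v(s)) \leq \rho_i f_i^{\rho_1,\rho_2}$ on $\partial K_{\rho_1,\rho_2}$, using $|\gamma_{ij}(t)| \leq \|\gamma_{ij}\|$, $I_{ij} \leq \rho_i f_i^{\rho_1,\rho_2}\int_0^1 \mathcal{K}_{iji}(s)g_i(s)\,ds$ and $\int_0^1|k_i(t,s)|g_i(s)\,ds \leq 1/m_i$, and then taking the supremum over $t\in[0,1]$, every term reorganises into the three groups on the left-hand side of \eqref{ind1s}: the $f_i^{\rho_1,\rho_2}$-group (collecting the $\mathcal{K}_{i1i}$ and $\mathcal{K}_{i2i}$ integrals and the $1/m_i$), the $Q_i,S_i$-group, and the $\tfrac{\rho_l}{\rho_i}\sum_j\|\gamma_{ij}\|\alpha^{\rho_1,\rho_2}_{ijl}[1]$-group. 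Denoting by $L$ the left-hand side of \eqref{ind1s}, this gives $\mu\rho_i = \mu\|u_i\| \leq \rho_i\,L < \rho_i$, contradicting $\mu \geq 1$ and proving the result.

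I expect the main obstacle to be organisational rather than conceptual: one must keep the three index families $\alpha^{\rho_1,\rho_2}_{iji}$ (the diagonal data of the matrix system), $\alpha^{\rho_1,\rho_2}_{ijl}$ with $l\neq i$ (the cross coupling absorbed into $Q_i,S_i$ and the final sum), and the growth constant $f_i^{\rho_1,\rho_2}$ cleanly separated, so that the final regrouping reproduces \eqref{ind1s} verbatim. The one point requiring genuine care is the direction of the matrix inequalities: since $\mu\geq 1$ one checks $\det\mathcal{N}_{i,\mu} \geq D_i > 0$, so that $\mathcal{N}_{i,\mu}^{-1}$ is order preserving, and then invokes Remark \ref{rem1} to replace it by the $\mu$-independent $\mathcal{N}_i^{-1}$; the required non-negativity of the data vector rests on $I_{ij}\geq 0$, i.e. on $\mathcal{K}_{iji}\geq 0$, which holds because each $dA_{iji}$ is a positive measure concentrated where $k_i$ is non-negative.
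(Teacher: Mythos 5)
Your proposal is correct and follows essentially the same route as the paper: reduce to showing $\mu(u,v)\neq T(u,v)$ on $\partial K_{\rho_1,\rho_2}$, apply the two same-component functionals to obtain a $2\times 2$ matrix inequality, invert it via Lemma~\ref{lematrix2} and Remark~\ref{rem1}, substitute back into the pointwise estimate and take the supremum to contradict \eqref{ind1s}. The only cosmetic difference is that the paper fixes $i=1$ without loss of generality while you carry the general index $i$; the bookkeeping and the key steps (including the check $\det\mathcal{N}_{i,\mu}\ge D_i>0$) are identical.
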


\begin{proof}
We show that $\mu (u,v)\neq T(u,v)$ for every $(u,v)\in \partial K_{\rho_1,\rho_2 }$
and for every $\mu \geq 1$.
In fact, if this does not happen, there exist $\mu \geq 1$ and $(u,v)\in
\partial K_{\rho_1,\rho_2 }$ such that $\mu (u,v)=T(u,v)$. Assume, without loss of
generality, that $\| u\| =\rho_1 $ and $\| v\| \leq \rho_2 $. Then we have, for $t\in[0,1]$,
\begin{equation} \label{dis2}
\mu u(t)=\sum_{j=1,2}\gamma_{1j}(t)H_{1j} [u,v]+F_{1}(u,v)(t).
\end{equation}

Applying $\alpha^{\rho_1,\rho_2} _{111}$ and $\alpha^{\rho_1,\rho_2}_{121}$ to both sides of \eqref{dis2} gives
\begin{align*}
\mu\alpha^{\rho_1,\rho_2}_{111}[u]
 &\leq
\sum_{j=1,2}\alpha^{\rho_1,\rho_2}_{111}\left[\gamma_{1j}\right]\left(\alpha^{\rho_1,\rho_2}_{1j1}[u]+\alpha^{\rho_1,\rho_2}_{1j2}[v]\right)+\alpha^{\rho_1,\rho_2}_{111}
\left[F_{1}(u,v)\right] ;\\
\mu\alpha^{\rho_1,\rho_2}_{121}[u]& \leq
\sum_{j=1,2}\alpha^{\rho_1,\rho_2}_{121}\left[\gamma_{1j}\right]\left(\alpha^{\rho_1,\rho_2}_{1j1}[u]+\alpha^{\rho_1,\rho_2}_{1j2}[v]\right)+\alpha^{\rho_1,\rho_2}_{121}\left[F_{1}(u,v)\right] .
\end{align*}
Since $v(t)\leq \rho_2$ for all $t\in \lbrack 0,1]$, we obtain
\begin{align*}
\mu\alpha^{\rho_1,\rho_2}_{111}[u]&\leq
\sum_{j=1,2}\alpha^{\rho_1,\rho_2}_{111}\left[\gamma_{1j}\right]\alpha^{\rho_1,\rho_2}_{1j1}
[u]+\rho_2
\sum_{j=1,2}\alpha^{\rho_1,\rho_2}_{111}\left[\gamma_{1j}\right
]\alpha^{\rho_1,\rho_2}_{1j2}
[1]+\alpha^{\rho_1,\rho_2}_{111}[F_{1}(u,v)]; \\
\mu\alpha^{\rho_1,\rho_2}_{121}[u]&\leq \sum_{j=1,2}\alpha^{\rho_1,\rho_2}_{121}\left[\gamma_{1j}\right]\alpha^{\rho_1,\rho_2}_{1j1}
[u]+\rho_2
\sum_{j=1,2}\alpha^{\rho_1,\rho_2}_{121}\left[\gamma_{1j}\right]
\alpha^{\rho_1,\rho_2}_{1j2}
[1]+\alpha^{\rho_1,\rho_2}_{121}[F_{1}(u,v)].
\end{align*}
Thus we have
\begin{multline}\label{idx1in}
\begin{pmatrix}
\mu- \alpha^{\rho_1,\rho_2}_{111}\left[\gamma_{11}\right] & -\alpha^{\rho_1,\rho_2}_{111}\left[\gamma_{12}\right]\\
-\alpha^{\rho_1,\rho_2}_{121}\left[\gamma_{11}\right] & \mu- \alpha^{\rho_1,\rho_2}_{121}\left[\gamma_{12}\right]
\end{pmatrix}
\begin{pmatrix}
\alpha^{\rho_1,\rho_2}_{111}[u]\\
\alpha^{\rho_1,\rho_2}_{121}[u]
\end{pmatrix}\\
\leq
\begin{pmatrix}
\rho_1
\displaystyle\sum_{j=1,2}\frac{\rho_2}{\rho_1}\alpha^{\rho_1,\rho_2}_{111}\left[\gamma_{1j}\right]\alpha^{\rho_1,\rho_2}_{1j2}
[1]+\alpha^{\rho_1,\rho_2}_{111}[F_{1}(u,v)]\\
\rho_1
\displaystyle\sum_{j=1,2}\frac{\rho_2}{\rho_1}\alpha^{\rho_1,\rho_2}_{121}\left[\gamma_{1j}\right]\alpha^{\rho_1,\rho_2}_{1j2}
[1]+\alpha^{\rho_1,\rho_2}_{121}[F_{1}(u,v)]
\end{pmatrix}.
\end{multline}
The matrix
$$
\mathcal{M}_{\mu}=
\begin{pmatrix}
\mu- \alpha^{\rho_1,\rho_2}_{111}\left[\gamma_{11}\right] & -\alpha^{\rho_1,\rho_2}_{111}\left[\gamma_{12}\right]\\
-\alpha^{\rho_1,\rho_2}_{121}\left[\gamma_{11}\right] & \mu- \alpha^{\rho_1,\rho_2}_{121}\left[\gamma_{12}\right]
\end{pmatrix}
$$
satisfies the hypotheses of Lemma \ref{lematrix2}, thus $(\mathcal{M}_{\mu})^{-1}$ is order preserving.
If we apply
$(\mathcal{M}_{\mu})^{-1}$ to both sides of the inequality \eqref{idx1in}  we
obtain
\begin{multline*}
\begin{pmatrix}
\alpha^{\rho_1,\rho_2}_{111}[u]\\
\alpha^{\rho_1,\rho_2}_{121}[u]
\end{pmatrix}
\leq
\frac{1}{\det(\mathcal{M}_{\mu})}
\begin{pmatrix}
\mu- \alpha^{\rho_1,\rho_2}_{121}\left[\gamma_{12}\right] & \alpha^{\rho_1,\rho_2}_{111}\left[\gamma_{12}\right]\\
\alpha^{\rho_1,\rho_2}_{121}\left[\gamma_{11}\right] & \mu- \alpha^{\rho_1,\rho_2}_{111}\left[\gamma_{11}\right]
\end{pmatrix}
\\
\times
\begin{pmatrix}
\rho_1
\displaystyle\sum_{j=1,2}\frac{\rho_2}{\rho_1}\alpha^{\rho_1,\rho_2}_{111}\left[\gamma_{1j}\right]\alpha^{\rho_1,\rho_2}_{1j2}
[1]+\alpha^{\rho_1,\rho_2}_{111}[F_{1}(u,v)]\\
\rho_1
\displaystyle\sum_{j=1,2}\frac{\rho_2}{\rho_1}\alpha^{\rho_1,\rho_2}_{121}\left[\gamma_{1j}\right]\alpha^{\rho_1,\rho_2}_{1j2}
[1]+\alpha^{\rho_1,\rho_2}_{121}[F_{1}(u,v)]
\end{pmatrix},
\end{multline*}
and by Remark \ref{rem1}, we have
\begin{multline*}
\begin{pmatrix}
\alpha^{\rho_1,\rho_2}_{111}[u]\\
\alpha^{\rho_1,\rho_2}_{121}[u]
\end{pmatrix}
\leq
\frac{1}{D_1}
\begin{pmatrix}
1- \alpha^{\rho_1,\rho_2}_{121}\left[\gamma_{12}\right] & \alpha^{\rho_1,\rho_2}_{111}\left[\gamma_{12}\right]\\
\alpha^{\rho_1,\rho_2}_{121}\left[\gamma_{11}\right] & 1- \alpha^{\rho_1,\rho_2}_{111}\left[\gamma_{11}\right]
\end{pmatrix}
\\
\times
\begin{pmatrix}
\rho_1
\displaystyle\sum_{j=1,2}\frac{\rho_2}{\rho_1}\alpha^{\rho_1,\rho_2}_{111}\left[\gamma_{1j}\right]\alpha^{\rho_1,\rho_2}_{1j2}
[1]+\alpha^{\rho_1,\rho_2}_{111}[F_{1}(u,v)]\\
\rho_1
\displaystyle\sum_{j=1,2}\frac{\rho_2}{\rho_1}\alpha^{\rho_1,\rho_2}_{121}\left[\gamma_{1j}\right]\alpha^{\rho_1,\rho_2}_{1j2}
[1]+\alpha^{\rho_1,\rho_2}_{121}[F_{1}(u,v)]
\end{pmatrix},
\end{multline*}
that is
$$
\begin{pmatrix}
\alpha^{\rho_1,\rho_2}_{111}[u]\\
\alpha^{\rho_1,\rho_2}_{121}[u]
\end{pmatrix}
\leq
\begin{pmatrix}
\theta_{11} & \theta_{12}\\
\theta_{13} & \theta_{14}
\end{pmatrix}
\begin{pmatrix}
\rho_1 Q_1+\alpha^{\rho_1,\rho_2}_{111}[F_1(u,v)]\\
\rho_1 S_1+\alpha^{\rho_1,\rho_2}_{121}[F_1(u,v)]
\end{pmatrix}.
$$
Thus we have
\begin{equation}\label{disb}
\begin{pmatrix}
\alpha^{\rho_1,\rho_2}_{111}[u]\\
\alpha^{\rho_1,\rho_2}_{121}[u]
\end{pmatrix}
\leq
\begin{pmatrix}
\rho_1(\theta_{11} Q_1+\theta_{12} S_1)+\theta_{11}\alpha^{\rho_1,\rho_2}_{111}[F_1(u,v)]+\theta_{12}\alpha^{\rho_1,\rho_2}_{121}[F_1(u,v)]\\
\rho_1(\theta_{13} Q_1+\theta_{14} S_1)+\theta_{13}\alpha^{\rho_1,\rho_2}_{111}[F_1(u,v)]+\theta_{14}\alpha^{\rho_1,\rho_2}_{121}[F_1(u,v)]
\end{pmatrix}.
\end{equation}
Since we have
\begin{gather}
\begin{aligned}\label{dis2a}
\mu |u(t)|\leq  \sum_{j=1,2}|\gamma_{1j}(t)|\alpha^{\rho_1,\rho_2}_{1j1}[u] &+\rho_1\sum_{j=1,2} \frac{\rho_2}{\rho_1}|\gamma_{1j}(t)|\alpha^{\rho_1,\rho_2}_{1j2}[1]\\
&+\int_{0}^{1}|k_{1}(t,s)|g_{1}(s)f(s,u(s),v(s))\,ds,
\end{aligned}
\end{gather}
substituting \eqref{disb} into \eqref{dis2a} gives
\begin{align*}
\mu |u(t)| \leq &\rho_1 \Bigl(|\gamma_{11}(t)|(\theta_{11} Q_1+\theta_{12} S_1)+
|\gamma_{12}(t)|(\theta_{13} Q_1+\theta_{14} S_1)+\sum_{j=1,2} \frac{\rho_2}{\rho_1}|\gamma_{1j}(t)|\alpha^{\rho_1,\rho_2}_{1j2}[1] \Bigr) \\
&+\Bigl(|\gamma_{11}(t)|\theta_{11}+
|\gamma_{12}(t)|\theta_{13} \Bigr)\int_0^1 \mathcal{K}_{111}(s)g_1(s)f_1(s,u(s),v(s))\,ds \\
&+\Bigl(|\gamma_{11}(t)|\theta_{12}+
|\gamma_{12}(t)|\theta_{14} \Bigr)\int_0^1 \mathcal{K}_{121}(s)g_1(s)f_1(s,u(s),v(s))\,ds\\
&+\int_{0}^{1}|k_{1}(t,s)|g_{1}(s)f_1(s,u(s),v(s))\,ds.
\end{align*}

Taking the supremum over $[0,1]$ gives
\begin{align*}
\mu {\rho_1} \leq &\rho_1 \Bigl(\| \gamma _{11}\| (\theta_{11} Q_1+\theta_{12} S_1)+
\| \gamma _{12}\|(\theta_{13} Q_1+\theta_{14} S_1)+\sum_{j=1,2} \frac{\rho_2}{\rho_1}||\gamma_{1j}||\alpha^{\rho_1,\rho_2}_{1j2}[1]   \Bigr) \\
&+{\rho_1} f_{1}^{\rho_1,\rho_2}\Bigl(\| \gamma _{11}\| \theta_{11}+
\| \gamma _{12}\| \theta_{13} \Bigr)\int_0^1 \mathcal{K}_{111}(s)g_1(s)\,ds \\
&+{\rho_1} f_{1}^{\rho_1,\rho_2}\Bigl(\| \gamma _{11}\|\theta_{12}+
\| \gamma _{12}\|\theta_{14} \Bigr)\int_0^1 \mathcal{K}_{121}(s)g_1(s)\,ds+{\rho_1} f_{1}^{\rho_1,\rho_2} \dfrac{1}{m_1}.
\end{align*}

Using the hypothesis \eqref{ind1s} we obtain $\mu \rho_1 <\rho_1 .$ This
contradicts the fact that $\mu \geq 1$ and proves the result.
\end{proof}

We give a first Lemma which shows that the index is 0 on a set
$V_{\rho_1,\rho_2}$.

\begin{lem}\label{idx0n}
Assume that
\begin{enumerate}
\item[$(\mathrm{I}_{\rho_1,\rho_2 }^{0})$]  there exist $\rho_1,\rho_2 >0$,  linear functionals $\alpha^{\rho_1,\rho_2}_{ij1}[\cdot]:\tilde{K_{1}}\rightarrow [0,+\infty)$, $\alpha^{\rho_1,\rho_2}_{ij2}[\cdot]:
  \tilde{K_{2}}\rightarrow [0,+\infty)$ given by
$$
\alpha^{\rho_1,\rho_2}_{ijl}[u]=\int_0^1 u(t)\,dA_{ijl}(t)
$$
 such that  for every $i,j,l=1,2$
\begin{itemize}
\item $dA_{ijl}$ is a positive Stieltjes measure,
\item $\alpha^{\rho_1,\rho_2}_{ijl}[\gamma_{ij}]<1$,
\item  $D_i:=(1-\alpha^{\rho_1,\rho_2}_{i1i}[\gamma_{i1}])(1-\alpha^{\rho_1,\rho_2}_{i2i}[\gamma_{i2}])
-\alpha^{\rho_1,\rho_2}_{i1i}[\gamma_{i2}]\alpha^{\rho_1,\rho_2}_{i2i}[\gamma_{i1}]>
0$,
\item $H_{ij}[u,v]\geq \alpha^{\rho_1,\rho_2}_{ij1}[u]+\alpha^{\rho_1,\rho_2}_{ij2}[v]$ for any $(u,v)\in \partial
V_{\rho_1,\rho_2}\,$,
\item the following inequality holds:
\begin{multline}\label{eq0s}
f_{i,(\rho_1
,{\rho_2})}\Biggl(\Bigl(\dfrac{c_{i1}\|\gamma_{i1}\|}{{D}_i}(1-\alpha^{\rho_1,\rho_2}_{i2i}[\gamma_{i2}])+
\dfrac{c_{i2}\|\gamma_{i2}\|}{{D}_i}\alpha^{\rho_1,\rho_2}_{i2i}[\gamma_{i1}]\Bigr) \int_{a_{i}}^{b_{i}}\mathcal{K}_{i1i}(s)g_{i}(s)\,ds\\
+\Bigl(\dfrac{c_{i1}\|\gamma_{i1}\|}{{D}_i}\alpha^{\rho_1,\rho_2}_{i1i}[\gamma_{i2}]+
\dfrac{c_{i2}\|\gamma_{i2}\|}{{D}_i}(1-\alpha^{\rho_1,\rho_2}_{i1i}[\gamma_{i1}])\Bigr)\int_{a_i}^{b_i}
 \mathcal{K}_{i2i}(s)g_i(s)\,ds +\frac{1}{M_{i}}\Biggr)>1,
\end{multline}{}
where
\begin{align*}
f_{1,(\rho_1,\rho_2)}:=&\mathrm{ess }\inf \Bigl\{ \frac{f_1(t,u,v)}{ \rho_1}:\; (t,u,v)\in [a_1,b_1]\times[\rho_1,\rho_1/c_1]\times[-\rho_2/c_2, \rho_2/c_2]\Bigr\},\\
f_{2,(\rho_1\rho_2)}:=&\mathrm{ess }\inf \Bigl\{ \frac{f_2(t,u,v)}{ \rho_2}:\; (t,u,v)\in [a_2,b_2]\times[-\rho_1/c_1,\rho_1/c_1]\times[\rho_2, \rho_2/c_2]\Bigr\},\\
 \frac{1}{M_i}:=&\inf_{t\in
[a_i,b_i]}\int_{a_i}^{b_i} k_i(t,s) g_i(s)\,ds.
\end{align*}
\end{itemize}
\end{enumerate}
Then $i_{K}(T,V_{\rho_1,\rho_2})=0$.
\end{lem}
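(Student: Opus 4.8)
The plan is to use property (1) of the fixed point index recalled at the start of the paper, taking as $e\in K\setminus\{0\}$ the pair of constant functions $e=(1,1)$. Concretely, I would show that $(u,v)\neq T(u,v)+\lambda e$ for every $(u,v)\in\partial V_{\rho_1,\rho_2}$ and every $\lambda\ge0$, which forces $i_K(T,V_{\rho_1,\rho_2})=0$. So suppose, for contradiction, that there are $(u,v)\in\partial V_{\rho_1,\rho_2}$ and $\lambda\ge0$ with $(u,v)=T(u,v)+\lambda e$. By the characterisation of $\partial V_{\rho_1,\rho_2}$, one coordinate --- say, without loss of generality, the first --- satisfies $\min_{t\in[a_1,b_1]}u(t)=\rho_1$, so that $\rho_1\le u(t)\le\rho_1/c_1$ for $t\in[a_1,b_1]$, while $|v(t)|\le\|v\|\le\rho_2/c_2$ for all $t$; the case in which the second coordinate attains its minimum is symmetric.

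First I would restrict to $t\in[a_1,b_1]$, where $\gamma_{1j}(t)\ge c_{1j}\|\gamma_{1j}\|\ge0$ and $k_1(t,s)\ge\tilde c_1\Phi_1(s)\ge0$. Using $\lambda\ge0$, the hypothesis $H_{1j}[u,v]\ge\alpha^{\rho_1,\rho_2}_{1j1}[u]+\alpha^{\rho_1,\rho_2}_{1j2}[v]$, the nonnegativity of $\alpha^{\rho_1,\rho_2}_{1j2}[v]$ (since $v\ge0$ on the support $[a_2,b_2]$ of the measures $dA_{1j2}$), and the sign of $k_1$ to discard the part of $F_1(u,v)(t)$ coming from $[0,1]\setminus[a_1,b_1]$, I arrive at
\begin{equation*}
u(t)\ge \gamma_{11}(t)\,\alpha^{\rho_1,\rho_2}_{111}[u]+\gamma_{12}(t)\,\alpha^{\rho_1,\rho_2}_{121}[u]+\int_{a_1}^{b_1}k_1(t,s)g_1(s)f_1(s,u(s),v(s))\,ds,\qquad t\in[a_1,b_1].
\end{equation*}
Applying the two functionals $\alpha^{\rho_1,\rho_2}_{111}$ and $\alpha^{\rho_1,\rho_2}_{121}$ (whose measures are supported in $[a_1,b_1]$) to this inequality gives a linear system $\mathcal{N}_1 X\ge Y$, where $X=(\alpha^{\rho_1,\rho_2}_{111}[u],\alpha^{\rho_1,\rho_2}_{121}[u])^{\top}$, the entries of $Y$ are $\int_{a_1}^{b_1}\mathcal{K}_{1j1}(s)g_1(s)f_1\,ds$, and $\mathcal{N}_1$ is the matrix with rows $(1-\alpha^{\rho_1,\rho_2}_{111}[\gamma_{11}],\,-\alpha^{\rho_1,\rho_2}_{111}[\gamma_{12}])$ and $(-\alpha^{\rho_1,\rho_2}_{121}[\gamma_{11}],\,1-\alpha^{\rho_1,\rho_2}_{121}[\gamma_{12}])$, whose determinant is $D_1>0$. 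Its off-diagonal entries are nonpositive and its diagonal entries are positive (because $\alpha^{\rho_1,\rho_2}_{111}[\gamma_{11}]<1$ and $\alpha^{\rho_1,\rho_2}_{121}[\gamma_{12}]<1$), so $\mathcal{N}_1$ is exactly of the form treated in Lemma \ref{lematrix2}; the key step is then to invert the inequality, using that $\mathcal{N}_1^{-1}$ is order preserving to get $X\ge\mathcal{N}_1^{-1}Y$. This yields explicit lower bounds for $\alpha^{\rho_1,\rho_2}_{111}[u]$ and $\alpha^{\rho_1,\rho_2}_{121}[u]$ in terms of $\int_{a_1}^{b_1}\mathcal{K}_{111}g_1f_1\,ds$ and $\int_{a_1}^{b_1}\mathcal{K}_{121}g_1f_1\,ds$, with coefficients that are combinations of $1/D_1$ and the numbers $\alpha^{\rho_1,\rho_2}_{1j1}[\gamma_{1l}]$, $j,l=1,2$.

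Finally I would substitute these bounds into the displayed inequality, replace $\gamma_{1j}(t)$ by its lower bound $c_{1j}\|\gamma_{1j}\|$ on $[a_1,b_1]$, and use $f_1(s,u(s),v(s))\ge\rho_1 f_{1,(\rho_1,\rho_2)}$ --- valid because $u(s)\in[\rho_1,\rho_1/c_1]$ and $v(s)\in[-\rho_2/c_2,\rho_2/c_2]$ for $s\in[a_1,b_1]$ --- to factor $\rho_1 f_{1,(\rho_1,\rho_2)}$ out of every integral. Taking the infimum over $t\in[a_1,b_1]$, the left-hand side becomes $\rho_1$, the term $\int_{a_1}^{b_1}k_1(t,s)g_1(s)\,ds$ contributes $1/M_1$, and the remaining coefficients reorganise exactly into the bracket of \eqref{eq0s}; dividing by $\rho_1>0$ produces $1\ge f_{1,(\rho_1,\rho_2)}(\cdots)$, contradicting \eqref{eq0s}. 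I expect the main obstacle to be the bookkeeping in the matrix inversion and the verification that the resulting coefficients coincide with those in \eqref{eq0s}, together with checking that all the sign conditions ($\gamma_{1j},k_1\ge0$ on $[a_1,b_1]$, $\alpha^{\rho_1,\rho_2}_{1j2}[v]\ge0$, positivity of the Stieltjes measures, $D_1>0$) are in force so that every inequality points in the required direction.
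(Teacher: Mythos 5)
Your argument is correct and follows essentially the same route as the paper's proof: the homotopy $(u,v)\neq T(u,v)+\lambda(1,1)$ on $\partial V_{\rho_1,\rho_2}$, the matrix inequality for $\bigl(\alpha^{\rho_1,\rho_2}_{111}[u],\alpha^{\rho_1,\rho_2}_{121}[u]\bigr)$ inverted via Lemma \ref{lematrix2}, and the substitution with $\gamma_{1j}(t)\geq c_{1j}\|\gamma_{1j}\|$ and $f_1\geq \rho_1 f_{1,(\rho_1,\rho_2)}$ followed by taking the infimum over $[a_1,b_1]$. The only cosmetic difference is the order of operations: you restrict to $t\in[a_1,b_1]$ before applying the functionals (which is why you invoke a support assumption on the measures $dA_{1j1}$ not listed among the hypotheses), whereas the paper applies $\alpha^{\rho_1,\rho_2}_{1l1}$ to the fixed-point identity on all of $[0,1]$ first and only then truncates the integrals to $[a_1,b_1]$ --- a step that implicitly needs the same kind of sign information on $\mathcal{K}_{1l1}$ outside $[a_1,b_1]$, so the two write-ups are on equal footing.
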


\begin{proof}
 Note that the constant function $(1,1)$ belongs to $K$. We prove that $(u,v)\ne
T(u,v)+\lambda (1,1)$ for  $(u,v)\in \partial
V_{\rho_1,\rho_2}$ and  $\lambda \geq 0$.\\
In fact, if this does not happen, there exist $(u,v)\in \partial V_{\rho_1,\rho_2}$ and
$\lambda\geq 0$ such that $(u,v)=T(u,v)+\lambda(1,1)$.
Without loss of generality, we can assume that for $t\in [a_1,b_1]$ we have
$\rho_1\leq u(t)\leq {\rho_1/c_1}$,  $\min u(t)=\rho_1$ and  $-\rho_2/c_2\leq v(t)\leq {\rho_2/c_2}$.

Then, for $t\in [a_1,b_1]$, we obtain
\begin{equation}\label{diss1n}
u(t)= \sum_{j=1,2}\gamma_{1j}(t)H_{1j}[u,v]+F_{1}(u,v)(t)+
\lambda\,1\,.
\end{equation}
Applying $\alpha^{\rho_1,\rho_2}_{1l1}$, $l=1,2$, to both sides of \eqref{diss1n} gives
\begin{align*}
\alpha^{\rho_1,\rho_2}_{1l1}[u]&\geq
\sum_{j=1,2}\alpha^{\rho_1,\rho_2}_{1l1}\left[\gamma_{1j}\right]\alpha^{\rho_1,\rho_2}_{1j1}[u]+\alpha^{\rho_1,\rho_2}_{1l1}\left[F_{1}(u,v)\right]+
\lambda \alpha^{\rho_1,\rho_2}_{1l1}[1 ].
\end{align*}
Thus we have
\begin{multline*}
\begin{pmatrix}
1-\alpha^{\rho_1,\rho_2}_{111}[\gamma_{11}] & -\alpha^{\rho_1,\rho_2}_{111}[\gamma_{12}]\\
-\alpha^{\rho_1,\rho_2}_{121}[\gamma_{11}] & 1-\alpha^{\rho_1,\rho_2}_{121}[\gamma_{12}]
\end{pmatrix}
\begin{pmatrix}
\alpha^{\rho_1,\rho_2}_{111}[u]\\
\alpha^{\rho_1,\rho_2}_{121}[u]
\end{pmatrix}
\\
\geq
\begin{pmatrix}
\alpha^{\rho_1,\rho_2}_{111}[F_1(u,v)]+\lambda \alpha^{\rho_1,\rho_2}_{111} [1]\\
\alpha^{\rho_1,\rho_2}_{121}[F_1(u,v)]+ \lambda\alpha_{121}[1]
\end{pmatrix}
\geq
\begin{pmatrix}
\alpha^{\rho_1,\rho_2}_{111}[F_1(u,v)]\\
\alpha^{\rho_1,\rho_2}_{121}[F_1(u,v)]
\end{pmatrix}.
\end{multline*}
In a similar way as in the proof of Lemma \ref{sysind1}, via order preserving matrices, we
obtain
$$
\begin{pmatrix}
\alpha^{\rho_1,\rho_2}_{111}[u]\\
\alpha^{\rho_1,\rho_2}_{121}[u]
\end{pmatrix}
\geq
\frac{1}{{D}_1}
\begin{pmatrix}
1-\alpha^{\rho_1,\rho_2}_{121}[\gamma_{12}]&\alpha^{\rho_1,\rho_2}_{111}[\gamma_{12}]\\
\alpha^{\rho_1,\rho_2}_{121}[\gamma_{11}] & 1-\alpha^{\rho_1,\rho_2}_{111}[\gamma_{11}]
\end{pmatrix}
\begin{pmatrix}
\alpha^{\rho_1,\rho_2}_{111}[F_1(u,v)]\\
\alpha^{\rho_1,\rho_2}_{121}[F_1(u,v)]
\end{pmatrix}\,.
$$
We have, for $t\in [a_1,b_1]$,
\begin{align*} u(t)
\geq&\Bigl(\dfrac{c_{11}\|\gamma_{11}\|}{{D}_1}(1-\alpha^{\rho_1,\rho_2}_{121}[\gamma_{12}])+
\dfrac{c_{12}\|\gamma_{12}\|}{{D}_1}\alpha^{\rho_1,\rho_2}_{121}[\gamma_{11}]\Bigr)\int_{a_1}^{b_1} \mathcal{K}_{111}(s)g_1(s)f_1(s,u(s),v(s))\,ds\\
&+\Bigl(\dfrac{c_{11}\|\gamma_{11}\|}{{D}_1}\alpha^{\rho_1,\rho_2}_{111}[\gamma_{12}]+
\dfrac{c_{12}\|\gamma_{12}\|}{{D}_1}(1-\alpha^{\rho_1,\rho_2}_{111}[\gamma_{11}])\Bigr) \int_{a_1}^{b_1} \mathcal{K}_{121}(s)g_1(s)f_1(s,u(s),v(s))\,ds\\
&+  \int_{a_1}^{b_1} k_{1}(t,s)g_1(s)f_1(s,u(s),v(s))\,ds + \lambda.
\end{align*}
Taking the minimum over $[a_{1},b_{1}]$ and
using the hypothesis \eqref{eq0s} we obtain $\rho_1>\rho_1 +\lambda $, a contradiction.
\end{proof}

The following Lemma provides a result of index 0 on
$V_{\rho_1,\rho_2}$ of a different flavour; here we control the growth
of just one nonlinearity $f_i$, at the cost of having to deal with
a larger domain. We mention that nonlinearities with different
growth were studied also in ~\cite{gifmpp-cnsns, gipp-nonlin,
precup1,precup2,ya1}.
\begin{lem}
Assume that
\begin{enumerate}
\item[$(\mathrm{I}_{\rho_1,\rho_2 }^{0})^{\diamond}$]  there exist $\rho_1,\rho_2 >0$,  linear functionals $\alpha^{\rho_1,\rho_2}_{ij1}[\cdot]:\tilde{K_{1}}\rightarrow [0,+\infty)$, $\alpha^{\rho_1,\rho_2}_{ij2}[\cdot]:
  \tilde{K_{2}}\rightarrow [0,+\infty)$ given by
$$
\alpha^{\rho_1,\rho_2}_{ijl}[u]=\int_0^1 u(t)\,dA_{ijl}(t)
$$
 such that for almost one $i=1,2$ and for every $j,l=1,2$
\begin{itemize}
\item $dA_{ijl}$ is a positive Stieltjes measure,
\item $\alpha^{\rho_1,\rho_2}_{ijl}[\gamma_{ij}]<1$,
\item  $D_i:=(1-\alpha^{\rho_1,\rho_2}_{i1i}[\gamma_{i1}])(1-\alpha^{\rho_1,\rho_2}_{i2i}[\gamma_{i2}])
-\alpha^{\rho_1,\rho_2}_{i1i}[\gamma_{i2}]\alpha^{\rho_1,\rho_2}_{i2i}[\gamma_{i1}]>
0$,
\item $H_{ij}[u,v]\geq \alpha^{\rho_1,\rho_2}_{ij1}[u]+\alpha^{\rho_1,\rho_2}_{ij2}[v]$ for any $(u,v)\in \partial
V_{\rho_1,\rho_2}$,
\item the following inequality holds:
\begin{multline}\label{diamante}
f^{\diamond}_{i,(\rho_1
,{\rho_2})}\Biggl(\Bigl(\dfrac{c_{i1}\|\gamma_{i1}\|}{{D}_i}(1-\alpha^{\rho_1,\rho_2}_{12i}[\gamma_{i2}])+
\dfrac{c_{i2}\|\gamma_{i2}\|}{{D}_i}\alpha^{\rho_1,\rho_2}_{12i}[\gamma_{i1}]\Bigr) \int_{a_{i}}^{b_{i}}\mathcal{K}_{i1i}(s)g_{i}(s)\,ds\\
+\Bigl(\dfrac{c_{i1}\|\gamma_{i1}\|}{{D}_i}\alpha^{\rho_1,\rho_2}_{i1i}[\gamma_{i2}]+
\dfrac{c_{i2}\|\gamma_{i2}\|}{{D}_i}(1-\alpha^{\rho_1,\rho_2}_{i1i}[\gamma_{i1}])\Bigr)\int_{a_i}^{b_i}
 \mathcal{K}_{i2i}(s)g_i(s)\,ds +\frac{1}{M_{i}}\Biggr)>1,
\end{multline}{}
\end{itemize}
\end{enumerate}
where
\begin{equation*}
f^{\diamond}_{1,(\rho_1 ,{\rho_2})}:=\mathrm{ess }\inf \Bigl\{
\frac{f_1(t,u,v)}{ \rho_1}:\; (t,u,v)\in
[a_1,b_1]\times[0,\rho_1/c]\times[-\rho_2/c_2, \rho_2/c_2]\Bigr\},
\end{equation*}
\begin{equation*}
f^{\diamond}_{2,(\rho_1 ,{\rho_2})}:=\mathrm{ess }\inf \Bigl\{
\frac{f_2(t,u,v)}{ \rho_2}:\; (t,u,v)\in
[a_2,b_2]\times[-\rho_1/c_1,\rho_1/c_1]\times[0,
\rho_2/c_2]\Bigr\}.
\end{equation*}
Then $i_{K}(T,V_{\rho_1,\rho_2})=0$.
\end{lem}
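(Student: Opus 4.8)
The plan is to establish that the index is $0$ on $V_{\rho_1,\rho_2}$ by showing, as in Lemma~\ref{idx0n}, that $(u,v)\neq T(u,v)+\lambda(1,1)$ for every $(u,v)\in\partial V_{\rho_1,\rho_2}$ and every $\lambda\geq 0$; since the constant function $(1,1)$ belongs to $K$, this forces $i_{K}(T,V_{\rho_1,\rho_2})=0$. First I would fix the index $i\in\{1,2\}$ for which the hypothesis $(\mathrm{I}_{\rho_1,\rho_2}^{0})^{\diamond}$ is assumed to hold and, without loss of generality, take $i=1$. Arguing by contradiction, I would suppose that there exist $(u,v)\in\partial V_{\rho_1,\rho_2}$ and $\lambda\geq 0$ with $(u,v)=T(u,v)+\lambda(1,1)$, and I would carry out the computation of Lemma~\ref{idx0n} only for the first component and only for $t\in[a_1,b_1]$.

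The one genuinely new point — and the reason a single nonlinearity suffices — is the observation that on $\partial V_{\rho_1,\rho_2}$ one does \emph{not} need to know which component realizes its minimum. Indeed, whatever the boundary configuration, the inclusion $V_{\rho_1,\rho_2}\subset K_{\rho_1/c_1,\rho_2/c_2}$ gives $\|u\|\le\rho_1/c_1\le\rho_1/c$ and $\|v\|\le\rho_2/c_2$, while the cone condition forces $u(t)\ge c_1\|u\|\ge 0$ for $t\in[a_1,b_1]$; hence on $[a_1,b_1]$ one always has $u(t)\in[0,\rho_1/c]$ and $v(t)\in[-\rho_2/c_2,\rho_2/c_2]$. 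This is precisely the domain appearing in the definition of $f^{\diamond}_{1,(\rho_1,\rho_2)}$, so the enlargement of the $u$-range from $[\rho_1,\rho_1/c_1]$ to $[0,\rho_1/c]$ is exactly what allows the argument to proceed \emph{without} assuming that $u$ (rather than $v$) is the component attaining the value $\rho_1$.

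From here the algebra is identical to that of Lemma~\ref{idx0n}. Applying the functionals $\alpha^{\rho_1,\rho_2}_{111}$ and $\alpha^{\rho_1,\rho_2}_{121}$ to the first-component equation, and using $H_{1j}[u,v]\ge\alpha^{\rho_1,\rho_2}_{1j1}[u]+\alpha^{\rho_1,\rho_2}_{1j2}[v]\ge\alpha^{\rho_1,\rho_2}_{1j1}[u]$, produces a $2\times 2$ linear system whose coefficient matrix is of the form covered by Lemma~\ref{lematrix2}, with determinant $D_1>0$; inverting it (the inverse being order preserving) yields a lower bound for the pair $\bigl(\alpha^{\rho_1,\rho_2}_{111}[u],\alpha^{\rho_1,\rho_2}_{121}[u]\bigr)$. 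Substituting this bound into the pointwise inequality for $u(t)$ on $[a_1,b_1]$ and using $f_1(s,u(s),v(s))\ge\rho_1\,f^{\diamond}_{1,(\rho_1,\rho_2)}$, I would obtain
\[
\min_{t\in[a_1,b_1]}u(t)\ \ge\ \rho_1\,f^{\diamond}_{1,(\rho_1,\rho_2)}\bigl(\,\cdots\,\bigr)+\lambda,
\]
where the bracketed factor is the coefficient appearing in \eqref{diamante}. By \eqref{diamante} the right-hand side exceeds $\rho_1+\lambda\ge\rho_1$, so $\min_{t\in[a_1,b_1]}u(t)>\rho_1$. This contradicts the boundary characterization of $V_{\rho_1,\rho_2}$, which forces $\min_{t\in[a_1,b_1]}u(t)\le\rho_1$ for every $(u,v)\in\partial V_{\rho_1,\rho_2}$, and the contradiction completes the proof. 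The only real obstacle is the domain bookkeeping of the second paragraph; once the inclusion $(u(t),v(t))\in[0,\rho_1/c]\times[-\rho_2/c_2,\rho_2/c_2]$ is in place, the matrix manipulation is routine and may be borrowed verbatim from Lemma~\ref{idx0n}.
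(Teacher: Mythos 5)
Your proposal is correct and follows essentially the same route as the paper, which likewise fixes the index $i$ for which \eqref{diamante} holds, notes that on $\partial V_{\rho_1,\rho_2}$ one has $\min_{t\in[a_1,b_1]}u(t)\le\rho_1$, $0\le u(t)\le\rho_1/c_1$ and $-\rho_2/c_2\le v(t)\le\rho_2/c_2$ regardless of which component attains its minimum, and then repeats the computation of Lemma~\ref{idx0n} to reach $\min_{t\in[a_1,b_1]}u(t)>\rho_1$, a contradiction. Your second paragraph makes explicit exactly the point the paper leaves implicit, namely that the enlarged domain in $f^{\diamond}_{1,(\rho_1,\rho_2)}$ is what removes the need to know which component realizes the boundary condition.
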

\begin{proof}
Suppose that the condition \eqref{diamante} holds for $i=1$. Let
$(u,v)\in \partial V_{\rho_1,\rho_2 }$ and $\lambda \geq 0$ such
that $(u,v)=T(u,v)+\lambda (1,1)$. Thus for $t\in [a_1,b_1]$ we
have $\min u(t)\leq \rho_1$, $0 \leq u(t)\leq \rho_1/c_1$ and
$-\rho_2/c_2\leq v(t)\leq \rho_2/c_2$. Proceeding in a similar proof of Lemma
\ref{idx0n}, we obtain a contradiction.
\end{proof}
\begin{rem}
In the case of $[a_1,b_1]=[a_2,b_2]$ we can relax the assumptions on the nonlinearities $f_i$,   providing a modification of the conditions   $(\mathrm{I}_{\rho
_{1},\rho_2}^{0})$ and $(\mathrm{I}_{\rho _{1},\rho_2}^{0})^{\diamond}$, similar to the one in \cite{gipp-nodea}. We omit the statement of these results.
\end{rem}
We now state  results regarding the existence of at least one, two
or three \emph{nontrivial} solutions of the system \eqref{syst}.

\begin{thm}\label{mult-sys}
The system \eqref{syst} has at least one nontrivial solution
in $K$ if one of the following conditions holds.

\begin{enumerate}

\item[$(S_{1})$]  For $i=1,2$ there exist $\rho _{i},r _{i}\in (0,+\infty )$ with $\rho
_{i}/c_i<r _{i}$ such that $(\mathrm{I}_{\rho _{1},\rho_2}^{0})\;\;[\text{or}\;(%
\mathrm{I}_{\rho _{1},\rho_2}^{0})^{\diamond}]$, $(\mathrm{I}_{r _{1},r_2}^{1})$ hold.

\item[$(S_{2})$] For $i=1,2$ there exist $\rho _{i},r _{i}\in (0,+\infty )$ with $\rho
_{i}<r _{i}$ such that $(\mathrm{I}_{\rho _{1},\rho_2}^{1}),\;\;(\mathrm{I}%
_{r _{1},r_2}^{0})$ hold.
\end{enumerate}

The system \eqref{syst} has at least two nontrivial solutions in $K$ if one of the following conditions holds.

\begin{enumerate}

\item[$(S_{3})$] For $i=1,2$ there exist $\rho _{i},r _{i},s_i\in (0,+\infty )$
with $\rho _{i}/c_i<r_i <s _{i}$ such that $(\mathrm{I}_{\rho
_{1},\rho_2}^{0})$, $[\text{or}\;(\mathrm{I}_{\rho _{1},\rho_2}^{0})^{\diamond}],\;\;(%
\mathrm{I}_{r _{1},r_2}^{1})$ $\text{and}\;\;(\mathrm{I}_{s _{1},s_2}^{0})$
hold.

\item[$(S_{4})$] For $i=1,2$ there exist $\rho _{i},r _{i},s_i\in (0,+\infty )$
with $\rho _{i}<r _{i}$ and $r _{i}/c_i<s _{i}$ such that $(\mathrm{I}%
_{\rho _{1},\rho_2}^{1}),\;\;(\mathrm{I}_{r _{1},r_2}^{0})$ $\text{and}\;\;(\mathrm{I%
}_{s _{1},s_2}^{1})$ hold.
\end{enumerate}

The system \eqref{syst} has at least three nontrivial solutions in $K$ if one
of the following conditions holds.

\begin{enumerate}
\item[$(S_{5})$] For $i=1,2$ there exist $\rho _{i},r _{i},s_i,\sigma_i\in
(0,+\infty )$ with $\rho _{i}/c_i<r _{i}<s _{i}$ and $s _{i}/c_i<\sigma
_{i}$ such that $(\mathrm{I}_{\rho _{1},\rho_2}^{0})\;\;[\text{or}\;(\mathrm{I}%
_{\rho _{1},\rho_2}^{0})^{\diamond}],$ $(\mathrm{I}_{r _{1},r_2}^{1}),\;\;(\mathrm{I}%
_{s_1,s_2}^{0})\;\;\text{and}\;\;(\mathrm{I}_{\sigma _{1},\sigma_2}^{1})$ hold.

\item[$(S_{6})$] For $i=1,2$ there exist $\rho _{i},r _{i},s_i,\sigma_i\in
(0,+\infty )$ with $\rho _{i}<r _{i}$ and $r _{i}/c_i<s _{i}<\sigma _{i}$
such that $(\mathrm{I}_{\rho _{1},\rho_2}^{1}),\;\;(\mathrm{I}_{r
_{1},r_2}^{0}),\;\;(\mathrm{I}_{s _{1},s_2}^{1})$ $\text{and}\;\;(\mathrm{I}%
_{\sigma _{1},\sigma_2}^{0})$ hold.
\end{enumerate}
\end{thm}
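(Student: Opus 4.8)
The plan is to combine the index computations of Lemmas~\ref{sysind1} and~\ref{idx0n} (together with its diamond variant) with property~(4) of the fixed point index from the first Lemma of the paper, exploiting the nesting of the sets $K_{\rho_1,\rho_2}$ and $V_{\rho_1,\rho_2}$. Recall that condition $(\mathrm{I}^{1}_{\rho_1,\rho_2})$ yields $i_K(T,K_{\rho_1,\rho_2})=1$, while $(\mathrm{I}^{0}_{\rho_1,\rho_2})$ (or its diamond version) yields $i_K(T,V_{\rho_1,\rho_2})=0$. The key geometric input is the chain $K_{\rho_1,\rho_2}\subset V_{\rho_1,\rho_2}\subset K_{\rho_1/c_1,\rho_2/c_2}$ from the set-properties Lemma, which converts strict inequalities between radii into strict inclusions $\overline{\Omega^{1}_K}\subset\Omega_K$ between the relevant open sets, exactly as demanded by property~(4).

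First I would dispose of $(S_1)$ and $(S_2)$, each producing one solution. Under $(S_1)$ we have $i_K(T,V_{\rho_1,\rho_2})=0$ and $i_K(T,K_{r_1,r_2})=1$; since $\rho_i/c_i<r_i$, the chain above gives $\overline{V_{\rho_1,\rho_2}}\subset\overline{K_{\rho_1/c_1,\rho_2/c_2}}\subset K_{r_1,r_2}$, so property~(4) furnishes a fixed point in $K_{r_1,r_2}\setminus\overline{V_{\rho_1,\rho_2}}$. Under $(S_2)$ we have $i_K(T,K_{\rho_1,\rho_2})=1$ and $i_K(T,V_{r_1,r_2})=0$; since $\rho_i<r_i$ and $\min_{t\in[a_i,b_i]}w_i(t)\le\|w_i\|$, we get $\overline{K_{\rho_1,\rho_2}}\subset V_{r_1,r_2}$, and property~(4) yields a fixed point in $V_{r_1,r_2}\setminus\overline{K_{\rho_1,\rho_2}}$. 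In both cases the zero pair lies in the innermost set (since $\min w_i=0<\rho_i$, respectively $\|w_i\|=0<\rho_i$), so the solution produced has $\|(u,v)\|>0$ and is therefore nontrivial.

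The multiplicity assertions $(S_3)$--$(S_6)$ are obtained by iterating this scheme along a nested tower of sets, applying property~(4) to each consecutive index jump. For instance, under $(S_3)$ the triple $\rho_i/c_i<r_i<s_i$ gives $i_K(T,V_{\rho_1,\rho_2})=0$, $i_K(T,K_{r_1,r_2})=1$, $i_K(T,V_{s_1,s_2})=0$ with $\overline{V_{\rho_1,\rho_2}}\subset K_{r_1,r_2}$ and $\overline{K_{r_1,r_2}}\subset V_{s_1,s_2}$; this produces one fixed point in $K_{r_1,r_2}\setminus\overline{V_{\rho_1,\rho_2}}$ and a second in $V_{s_1,s_2}\setminus\overline{K_{r_1,r_2}}$, distinct because the two annular regions are disjoint. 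The cases $(S_4),(S_5),(S_6)$ follow identically, alternating the roles of $K$ and $V$ and using the plain inequality ($\rho_i<r_i$, $s_i<\sigma_i$, etc.) whenever one passes from a $K$-set out to a larger $V$-set, and the stronger inequality ($r_i/c_i<s_i$, $s_i/c_i<\sigma_i$, etc.) whenever one passes from a $V$-set out to a larger $K$-set.

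I expect the only delicate point to be the bookkeeping: matching each hypothesis $(\mathrm{I}^{0})$ or $(\mathrm{I}^{1})$ to the correct set ($V$ or $K$) and checking that the stated radius conditions yield exactly the strict inclusions required. The two inclusion types behave differently — enlarging a $K$-set to a $V$-set needs only the plain inequality, whereas enlarging a $V$-set to a $K$-set needs the $/c$ inequality because of the factor in $V_{\rho_1,\rho_2}\subset K_{\rho_1/c_1,\rho_2/c_2}$ — and keeping this distinction straight across all six cases is where care is needed; once the inclusions are verified, the conclusion is a direct invocation of the cited index lemmas and property~(4).
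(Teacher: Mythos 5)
Your proposal is correct and is precisely the argument the paper intends (and omits): combine the index computations of Lemmas~\ref{sysind1} and~\ref{idx0n} (or its diamond variant) with property~(4) of the fixed point index, using the nesting $K_{\rho_1,\rho_2}\subset V_{\rho_1,\rho_2}\subset K_{\rho_1/c_1,\rho_2/c_2}$ to turn the stated radius inequalities into the required strict inclusions, and noting that $(0,0)$ always lies in the innermost set so each fixed point obtained is nontrivial. Your bookkeeping of when the plain inequality $\rho_i<r_i$ suffices (passing from a $K$-set into a $V$-set) versus when $\rho_i/c_i<r_i$ is needed (passing from a $V$-set into a $K$-set) is exactly right.
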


\subsection{Non-existence results for system of perturbed  integral equations }
We now prove some non-existence results  for systems when both the functions $\gamma_{ij}$ and the kernels $k_i$ are allowed to change sign.

\begin{thm}\label{nonexi1s}
Assume that there exist linear functionals
$\alpha_{ij}[\cdot]:\tilde{K_{i}}\rightarrow [0,+\infty)$ given by
  $$
\alpha^{\rho_1,\rho_2}_{ij}[u]=\int_0^1 u(t)\,dA_{ij}(t)
$$ such that  for $i,j=1,2$
\begin{itemize}
\item $dA_{ij}$ is a positive Stieltjes measure,
\item $\alpha_{ij}[\gamma_{ij}]<1$,
\item  $D_i:=(1-\alpha_{i1}[\gamma_{i1}])(1-\alpha_{i2}[\gamma_{i2}])
-\alpha_{i1}[\gamma_{i2}]\alpha_{i2}[\gamma_{i1}]>
0$,
\item $H_{ij}[u_1,u_2]\leq \alpha_{ij}[u_i]$ for every $(u_1,u_2)\in K$,
\item $f_i(t,u_1,u_2)<N_{i}|u_i|$ for every $t\in [0,1]$ and $u_i\neq 0$,
where
\begin{align*}
\frac{1}{N_{i}}:=\sup_{t \in [0,1] }\Bigl\{&\int_{0}^{1}|k_{i}(t,s)|g_{i}(s)\,ds+\Bigl(|\gamma_{i1}(t)|\dfrac{1- \alpha_{i2}\left[\gamma_{i2}\right]}{D_i}+ |\gamma_{i2}(t)|\dfrac{\alpha_{i2}\left[\gamma_{i1}\right]}{D_i}\Bigr)\int_0^1 \mathcal{K}_{i1}(s)g_i(s)\,ds \\
&+\Bigl(|\gamma_{i1}(t)|\dfrac{\alpha_{i1}\left[\gamma_{i2}\right]}{D_i}+
|\gamma_{i2}(t)|\dfrac{1- \alpha_{i1}\left[\gamma_{i1}\right]}{D_i} \Bigr)\int_0^1 \mathcal{K}_{i2}(s)g_i(s)\,ds\Bigr\}
\end{align*}
and
\begin{equation*}
\mathcal{K}_{ij}(s):=\int_0^1 k_i(t,s) \,dA_{ij}.
\end{equation*}
\end{itemize}
Then there is no nontrivial solution of the system \eqref{syst} in $K$.
\end{thm}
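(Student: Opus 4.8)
The plan is to mimic the single-equation non-existence argument given above, but now accounting for the $2\times 2$ coupling produced by the two functionals $\alpha_{i1}$ and $\alpha_{i2}$ acting on each component, which is exactly the linear-algebra device already used in Lemma \ref{sysind1}. Suppose, for contradiction, that $(u,v)=(u_1,u_2)\in K$ is a nontrivial solution, so that $\|(u,v)\|=\max\{\|u_1\|,\|u_2\|\}>0$. I would fix an index $i\in\{1,2\}$ with $\|u_i\|>0$ (one exists by nontriviality) and write the $i$-th equation as $u_i(t)=\gamma_{i1}(t)H_{i1}[u,v]+\gamma_{i2}(t)H_{i2}[u,v]+F_i(u,v)(t)$, the goal being to bound the two numbers $\alpha_{i1}[u_i]$ and $\alpha_{i2}[u_i]$.

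First I would apply each of $\alpha_{i1}$ and $\alpha_{i2}$ to the $i$-th equation. Using $H_{ij}[u,v]\le\alpha_{ij}[u_i]$ together with $H_{ij}\ge0$ and the positivity of the measures $dA_{ij}$, this yields two scalar inequalities that assemble into $\mathcal N_i\,(\alpha_{i1}[u_i],\alpha_{i2}[u_i])^{T}\le(\alpha_{i1}[F_i(u,v)],\alpha_{i2}[F_i(u,v)])^{T}$, where $\mathcal N_i=\begin{pmatrix}1-\alpha_{i1}[\gamma_{i1}] & -\alpha_{i1}[\gamma_{i2}]\\ -\alpha_{i2}[\gamma_{i1}] & 1-\alpha_{i2}[\gamma_{i2}]\end{pmatrix}$. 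Since $\alpha_{ij}[\gamma_{ij}]<1$ and $\det\mathcal N_i=D_i>0$, the matrix $\mathcal N_i$ meets the hypotheses of Lemma \ref{lematrix2}, so $\mathcal N_i^{-1}$ is order preserving; applying it to both sides preserves the inequality and produces explicit upper bounds for $\alpha_{i1}[u_i]$ and $\alpha_{i2}[u_i]$ in terms of $\alpha_{i1}[F_i]$ and $\alpha_{i2}[F_i]$, with coefficients $\tfrac{1-\alpha_{i2}[\gamma_{i2}]}{D_i}$, $\tfrac{\alpha_{i1}[\gamma_{i2}]}{D_i}$, and so on. By Fubini, $\alpha_{ij}[F_i(u,v)]=\int_0^1\mathcal K_{ij}(s)g_i(s)f_i(s,u(s),v(s))\,ds$.

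Next I would return to the pointwise identity for $u_i$, take absolute values, and use $H_{ij}\le\alpha_{ij}[u_i]$ once more to obtain $|u_i(t)|\le|\gamma_{i1}(t)|\alpha_{i1}[u_i]+|\gamma_{i2}(t)|\alpha_{i2}[u_i]+\int_0^1|k_i(t,s)|g_i(s)f_i\,ds$. Substituting the two bounds from the previous step and regrouping the terms according to whether they carry $\int_0^1\mathcal K_{i1}g_i$ or $\int_0^1\mathcal K_{i2}g_i$ reproduces exactly the bracket defining $1/N_i$. Invoking the growth hypothesis $f_i(s,u(s),v(s))<N_i|u_i(s)|\le N_i\|u_i\|$ then gives $|u_i(t)|<N_i\|u_i\|\{\cdots\}\le N_i\|u_i\|\cdot\tfrac1{N_i}=\|u_i\|$ for every $t$; evaluating at a point where $|u_i|$ attains its maximum yields $\|u_i\|<\|u_i\|$, the desired contradiction, so no nontrivial solution can exist in $K$.

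The main obstacle is the matrix step: one must verify that $\mathcal N_i$ has the sign pattern and positive determinant required by Lemma \ref{lematrix2}, and that applying the order-preserving $\mathcal N_i^{-1}$ keeps the inequality, since this is what replaces the scalar division by $1-\alpha[\gamma]$ of the single-equation proof. The remaining care is bookkeeping—collecting the four coefficients so that the supremum over $t\in[0,1]$ matches the stated $1/N_i$ precisely—and the minor point that the strict inequality $f_i<N_i|u_i|$ (which, together with $f_i\ge0$ and the Carath\'eodory continuity, forces $f_i=0$ where $u_i=0$) is what upgrades ``$\le$'' to the strict ``$<$'' needed for the contradiction.
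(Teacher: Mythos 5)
Your proposal is correct and follows essentially the same route as the paper's proof: apply $\alpha_{i1},\alpha_{i2}$ to the $i$-th equation, assemble the resulting inequalities into the $2\times 2$ system with matrix $\mathcal{N}_i$, invert via the order-preserving Lemma~\ref{lematrix2}, substitute back into the pointwise bound for $|u_i(t)|$, and use $f_i<N_i|u_i|$ to contradict $\|u_i\|>0$. Your two refinements — evaluating at a point where $|u_i|$ attains its maximum rather than loosely ``taking the supremum'', and noting that continuity plus $f_i\geq 0$ handles the points where $u_i(s)=0$ — are welcome but do not change the argument.
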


\begin{proof}
Suppose that there exists  $(u,v)\in K$ such that $ (u,v)=T(u,v)$ and  assume, without loss of
generality, that $|| u|| =\nu>0 $.  Then we have, for $t\in[0,1]$,
\begin{equation} \label{disn}
 u(t)=\sum_{j=1,2}\gamma_{1j}(t)H_{1j} [u,v]+F_{1}(u,v)(t).
\end{equation}
Applying $\alpha _{1l}$, $l=1,2$, to both sides of \eqref{disn} gives
\begin{align*}
\alpha_{1l}[u]&=
\sum_{j=1,2}\alpha_{1l}\left[\gamma_{1j}\right]H_{1j}[u,v]+\alpha_{1l}\left[F_{1}(u,v)\right]
 \leq \sum_{j=1,2}\alpha_{1l}\left[\gamma_{1j}\right]\alpha_{1j}[u]+\alpha_{1l}\left[F_{1}(u,v)\right].
\end{align*}
Thus we have
\begin{gather}
\begin{aligned}\label{idx1inn}
\begin{pmatrix}
1- \alpha_{11}\left[\gamma_{11}\right] & -\alpha_{11}\left[\gamma_{12}\right]\\
-\alpha_{12}\left[\gamma_{11}\right] & 1- \alpha_{12}\left[\gamma_{12}\right]
\end{pmatrix}
\begin{pmatrix}
\alpha_{11}[u]\\
\alpha_{12}[u]
\end{pmatrix}
\leq
\begin{pmatrix}
\alpha_{11}[F_{1}(u,v)]\\
\alpha_{12}[F_{1}(u,v)]
\end{pmatrix}.
\end{aligned}
\end{gather}
In a similar way as in the proof of Lemma \ref{sysind1}  we
obtain, via order preserving matrices,

\begin{equation}\label{disbn}
\begin{pmatrix}
\alpha_{11}[u]\\
\alpha_{12}[u]
\end{pmatrix}
\leq
\begin{pmatrix}
\dfrac{1- \alpha_{12}\left[\gamma_{12}\right]}{D_1}\alpha_{11}[F_1(u,v)]+\dfrac{\alpha_{11}\left[\gamma_{12}\right]}{D_1}\alpha_{12}[F_1(u,v)]\\
\dfrac{\alpha_{12}\left[\gamma_{11}\right]}{D_1}\alpha_{11}[F_1(u,v)]+\dfrac{1- \alpha_{11}\left[\gamma_{11}\right]}{D_1}\alpha_{12}[F_1(u,v)]
\end{pmatrix}.
\end{equation}
Since we have
\begin{gather}
\begin{aligned}\label{dis2an}
|u(t)|\leq  \sum_{j=1,2}|\gamma_{1j}(t)|\alpha_{1j}[u] +\int_{0}^{1}|k_{1}(t,s)|g_{1}(s)f(s,u(s),v(s))\,ds,
\end{aligned}
\end{gather}
substituting \eqref{disbn} into \eqref{dis2an}, we obtain, for $t\in [0,1]$,
\begin{align*}
 |u(t)| \leq &\Bigl(|\gamma_{11}(t)|\dfrac{1- \alpha_{12}\left[\gamma_{12}\right]}{D_1}+ |\gamma_{12}(t)|\dfrac{\alpha_{12}\left[\gamma_{11}\right]}{D_1}\Bigr)\int_0^1 \mathcal{K}_{11}(s)g_1(s)f_1(s,u(s),v(s))\,ds \\
&+\Bigl(|\gamma_{11}(t)|\dfrac{\alpha_{11}\left[\gamma_{12}\right]}{D_1}+
|\gamma_{12}(t)|\dfrac{1- \alpha_{11}\left[\gamma_{11}\right]}{D_1} \Bigr)\int_0^1 \mathcal{K}_{12}(s)g_1(s)f_1(s,u(s),v(s))\,ds\\
&+\int_{0}^{1}|k_{1}(t,s)|g_{1}(s)f_1(s,u(s),v(s))\,ds\\
< &\Bigl(|\gamma_{11}(t)|\dfrac{1- \alpha_{12}\left[\gamma_{12}\right]}{D_1}+ |\gamma_{12}(t)|\dfrac{\alpha_{12}\left[\gamma_{11}\right]}{D_1}\Bigr)\int_0^1 \mathcal{K}_{11}(s)g_1(s)N_1|u(s)|\,ds \\
&+\Bigl(|\gamma_{11}(t)|\dfrac{\alpha_{11}\left[\gamma_{12}\right]}{D_1}+
|\gamma_{12}(t)|\dfrac{1- \alpha_{11}\left[\gamma_{11}\right]}{D_1} \Bigr)\int_0^1 \mathcal{K}_{12}(s)g_1(s)N_1|u(s)|\,ds\\
&+\int_{0}^{1}|k_{1}(t,s)|g_{1}(s)N_1|u(s)|\,ds.
\end{align*}
Taking the supremum over $[0,1]$ gives
 $\nu <\nu $, a contradiction that proves the result.
\end{proof}

\begin{thm}\label{noexi2 n}
Assume that
 there exist   linear functionals $\alpha_{ij}[\cdot]:\tilde{K_{i}}\rightarrow [0,+\infty)$ given by
 $$
\alpha^{\rho_1,\rho_2}_{ij}[u]=\int_0^1 u(t)\,dA_{ij}(t)
$$ such that  for every $i,j=1,2$
\begin{itemize}
\item $dA_{ij}$ is a positive Stieltjes measure,
\item $\alpha_{ij}[\gamma_{ij}]<1$,
\item  $D_i:=(1-\alpha_{i1}[\gamma_{i1}])(1-\alpha_{i2}[\gamma_{i2}])
-\alpha_{i1}[\gamma_{i2}]\alpha_{i2}[\gamma_{i1}]>
0$,
\item $H_{ij}[u_1,u_2]\geq \alpha_{ij}[u_i]$ for every $(u_1,u_2)\in K$,
\item $f_i(t,u_1,u_2)>P_i\, u_i$ for every $t\in [a,b]$ and $u_i \in (0,+\infty)$,
where
\end{itemize}
\begin{align*}
\dfrac{1}{P_i}:=\inf_{t \in [a_i,b_i] }\Bigl\{&\int_{a_{i}}^{b_{i}} k_{i}(t,s)g_i(s)\,ds+\Bigl(\dfrac{\gamma_{i1}(t)}{{D}_i}(1-\alpha_{i2}[\gamma_{i2}])+
\dfrac{\gamma_{i2}(t)}{{D}_i}\alpha_{i2}[\gamma_{i1}]\Bigr) \int_{a_{i}}^{b_{i}}\mathcal{K}_{i1}(s)g_{i}(s)\,ds\\
& +\Bigl(\dfrac{\gamma_{i1}(t)}{{D}_i}\alpha_{i1}[\gamma_{i2}]+
\dfrac{\gamma_{i2}(t)}{{D}_i}(1-\alpha_{i1}[\gamma_{i1}])\Bigr)\int_{a_i}^{b_i}
 \mathcal{K}_{i2}(s)g_i(s)\,ds \Bigr\}.
\end{align*}
Then there is no nontrivial solution of the system \eqref{syst} in $K$.
\end{thm}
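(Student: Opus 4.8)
The plan is to argue by contradiction, following closely the scheme of Theorem~\ref{nonexi2} and transcribing the index-zero computation of Lemma~\ref{idx0n} to a genuine fixed point. Suppose $(u,v)\in K$ is a nontrivial solution of~\eqref{syst}, so that $\|(u,v)\|>0$ and at least one component is nonzero; relabelling if necessary, I assume $\|u\|>0$ and work with $i=1$. Since $u\in\tilde K_1$, I have $\theta:=\min_{t\in[a_1,b_1]}u(t)\ge c_1\|u\|>0$, so in particular $u(s)>0$ for every $s\in[a_1,b_1]$, which is exactly the range on which the growth hypothesis $f_1(t,u_1,u_2)>P_1 u_1$ is available.

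First I would extract a lower bound for the functionals $\alpha_{11}[u],\alpha_{12}[u]$. Applying $\alpha_{1l}$, $l=1,2$, to the full first-component identity $u(t)=\sum_{j}\gamma_{1j}(t)H_{1j}[u,v]+F_1(u,v)(t)$ and using $H_{1j}[u,v]\ge\alpha_{1j}[u]$ gives the matrix inequality $\mathcal{N}_1\,(\alpha_{11}[u],\alpha_{12}[u])^{T}\ge(\alpha_{11}[F_1(u,v)],\alpha_{12}[F_1(u,v)])^{T}$, where $\mathcal{N}_1$ is the matrix with entries $1-\alpha_{11}[\gamma_{11}]$, $-\alpha_{11}[\gamma_{12}]$, $-\alpha_{12}[\gamma_{11}]$, $1-\alpha_{12}[\gamma_{12}]$. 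Since $D_1>0$, Lemma~\ref{lematrix2} guarantees that $\mathcal{N}_1^{-1}$ is order preserving, so applying it reverses nothing and yields componentwise the same lower bounds for $\alpha_{1l}[u]$ in terms of $\alpha_{1l}[F_1(u,v)]$ that appear in~\eqref{disbn} (with $\mu=1$).

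Next I would localize on $[a_1,b_1]$. For $t\in[a_1,b_1]$ the data are nonnegative there, namely $k_1(t,s)\ge\tilde c_1\Phi_1(s)\ge 0$ and $\gamma_{1j}(t)\ge c_{1j}\|\gamma_{1j}\|\ge 0$, and the positivity of the functionals on $\tilde K_1$ gives $\mathcal{K}_{1l}\ge 0$; hence in the identity for $u(t)$ I may discard the portion of the integral outside $[a_1,b_1]$ and substitute the lower bounds for $\alpha_{1l}[u]$ from the previous step, likewise restricting $\alpha_{1l}[F_1(u,v)]$ to $\int_{a_1}^{b_1}\mathcal{K}_{1l}(s)g_1(s)f_1\,ds$. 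This produces, for $t\in[a_1,b_1]$, an inequality in which $u(t)$ is bounded below by $\int_{a_1}^{b_1}k_1(t,s)g_1 f_1\,ds$ plus the two $\mathcal{K}_{1l}$-integrals with precisely the nonnegative coefficients entering the definition of $1/P_1$. Using $f_1(s,u(s),v(s))>P_1 u(s)\ge P_1\theta$ on $[a_1,b_1]$ in each (nonnegative) integrand gives $u(t)>P_1\theta\,B_1(t)$, where $B_1(t)$ is the bracket defining $1/P_1$; taking the infimum over $t\in[a_1,b_1]$ and using $\inf_t B_1(t)=1/P_1$ yields $\theta>P_1\theta\cdot(1/P_1)=\theta$, a contradiction. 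The case in which $v$ is the nonzero component is identical with $i=2$.

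The main obstacle I expect is the correct handling of the sign-changing data rather than any single estimate: one must verify that the off-diagonal entries $\alpha_{i1}[\gamma_{i2}],\alpha_{i2}[\gamma_{i1}]$ and the kernels $\mathcal{K}_{il}$ are nonnegative, so that Lemma~\ref{lematrix2} applies and so that restricting every integral from $[0,1]$ to $[a_i,b_i]$ can only decrease it, thereby preserving all inequalities in the lower-bound direction. This is exactly where the positivity of the functionals on the cone $\tilde K_i$ and the lower bound $k_i\ge\tilde c_i\Phi_i$ on $[a_i,b_i]$ are indispensable, and it is the only place where the sign changes of $k_i$ and $\gamma_{ij}$ could cause the argument to break down.
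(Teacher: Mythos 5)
Your argument is correct and is essentially the paper's own proof of Theorem~\ref{noexi2 n}: both proceed by contradiction, apply the functionals $\alpha_{1l}$ to the fixed-point identity for the nonzero component, invert the resulting matrix inequality via the order-preserving Lemma~\ref{lematrix2}, localize all integrals to $[a_1,b_1]$ (where $k_1$, $\gamma_{1j}$, $\mathcal{K}_{1l}$ and $u$ are nonnegative), insert $f_1(s,u(s),v(s))>P_1u(s)\ge P_1\theta$, and take the infimum to reach $\theta>\theta$. Your closing remarks on why the sign-changing data cause no harm are points the paper leaves implicit, but the route is the same.
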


\begin{proof}
Assume that there exists $(u,v)\in K$ such that
$(u,v)=T(u,v)$ and $(u,v)\neq (0,0)$. Let, for example, be
$\|u\|\neq 0$ with $\displaystyle\min_{t\in
[a_1,b_1]}u(t)=\theta>0$.  Then we obtain, for $t\in [a_1,b_1]$,
\begin{equation}\label{diss1nn}
u(t)= \sum_{j=1,2}\gamma_{1j}(t)H_{1j}[u,v]+F_{1}(u,v)(t)\,.
\end{equation}
Applying $\alpha _{1l}$, $l=1,2$, to both sides of
\eqref{diss1nn} gives
\begin{align*}
\alpha_{1l}[u]&=
\sum_{j=1,2}\alpha_{1l}\left[\gamma_{1j}\right]H_{1j}[u,v]+\alpha_{1l}\left[F_{1}(u,v)\right]
 \geq \sum_{j=1,2}\alpha_{1l}\left[\gamma_{1j}\right]\alpha_{1j}[u]+\alpha_{1l}\left[F_{1}(u,v)\right].
\end{align*}
In a similar way as in the proof of Theorem~\ref{nonexi1s}, we obtain,
for $t\in [a_1,b_1]$, 
\begin{align*}
 u(t)
>&\Bigl(\dfrac{\gamma_{11}(t)}{{D}_1}(1-\alpha_{12}[\gamma_{12}])+
\dfrac{\gamma_{12}(t)}{{D}_1}\alpha_{12}[\gamma_{11}]\Bigr)\int_{a_1}^{b_1} \mathcal{K}_{11}(s)g_1(s)P_1 u(s)\,ds\\
&+\Bigl(\dfrac{\gamma_{11}(t)}{{D}_1}\alpha_{11}[\gamma_{12}]+
\dfrac{\gamma_{12}(t)}{D_1}(1-\alpha_{11}[\gamma_{11}])\Bigr) \int_{a_1}^{b_1} \mathcal{K}_{121}(s)g_1(s)P_1u(s)\,ds\\
&+\int_{a_1}^{b_1} k_{1}(t,s)g_1(s)P_1u(s)\,ds.
\end{align*}
Taking the minimum over $[a_{1},b_{1}]$ gives
 $\theta> \theta$, a contradiction.
\end{proof}
\begin{thm} Assume that for $i,j=1,2$ there exist linear functionals $\alpha_{ij}[\cdot]:\tilde{K_{i}}\rightarrow [0,+\infty)$ given by
  $$
\alpha^{\rho_1,\rho_2}_{ij}[u]=\int_0^1 u(t)\,dA_{ij}(t)
$$ such that the assumptions in Theorem\,\ref{nonexi1s} are verified for an $i\in\{1,2\}$ and the assumptions in Theorem~\ref{noexi2 n} are verified for  the other index.
Then there is no nontrivial solution of the system~\eqref{syst} in
$K$.
\end{thm}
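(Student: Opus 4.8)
The plan is to argue by contradiction, relying on the observation that the estimates in the proofs of Theorem~\ref{nonexi1s} and Theorem~\ref{noexi2 n} are performed \emph{one coordinate at a time}: the bound produced there for the $i$-th component of a putative fixed point is extracted from the $i$-th equation alone, and therefore uses only the data carrying the index $i$ (the bounds on $H_{i1},H_{i2}$, the growth restriction on $f_i$, the positivity $D_i>0$, and the inequalities $\alpha_{ij}[\gamma_{ij}]<1$). After relabelling I may assume that the hypotheses of Theorem~\ref{nonexi1s} hold for $i=1$ and those of Theorem~\ref{noexi2 n} hold for $i=2$; the reverse labelling is handled identically by interchanging $u$ and $v$.

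Suppose, then, that $(u,v)\in K$ solves~\eqref{syst} with $\|(u,v)\|=\max\{\|u\|,\|v\|\}>0$. A nontrivial solution must have $\|u\|>0$ or $\|v\|>0$, so I would distinguish the two (possibly overlapping, but jointly exhaustive) cases accordingly.

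If $\|u\|>0$, set $\nu=\|u\|$ and repeat the computation of Theorem~\ref{nonexi1s} for the first equation: apply $\alpha_{11},\alpha_{12}$ to $u=\sum_j\gamma_{1j}H_{1j}[u,v]+F_1(u,v)$, use $H_{1j}[u,v]\le\alpha_{1j}[u]$, invert the order-preserving matrix governed by $D_1>0$ (via Lemma~\ref{lematrix2}) to bound $\alpha_{11}[u],\alpha_{12}[u]$, and finally insert the strict inequality $f_1(t,u,v)<N_1|u|$. Taking the supremum over $[0,1]$ yields $\nu<\nu$, a contradiction. Only the $i=1$ data enter this chain, so it applies verbatim.

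If instead $\|v\|>0$, then, since $v\in\tilde{K_2}$, we have $\theta:=\min_{t\in[a_2,b_2]}v(t)\ge c_2\|v\|>0$, and I would mirror the proof of Theorem~\ref{noexi2 n} for the second equation, using $H_{2j}[u,v]\ge\alpha_{2j}[v]$, the order-preserving inverse associated with $D_2>0$, and the strict lower growth $f_2(t,u,v)>P_2\,v$ on $[a_2,b_2]$, to reach $\theta>\theta$, again a contradiction. Since at least one of the two cases occurs whenever $(u,v)$ is nontrivial, no nontrivial solution can exist. The one point meriting attention is precisely the decoupling invoked at the outset---confirming that each single-index estimate never reaches for a hypothesis attached to the other index---and this is transparent from the form of the $i$-th integral equation, which features only $\gamma_{ij}$, $H_{ij}$, $k_i$, $f_i$ and the associated functionals $\alpha_{ij}$, $\mathcal{K}_{ij}$ and the determinant $D_i$.
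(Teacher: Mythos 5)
Your proposal is correct and follows essentially the same route as the paper: the paper's own proof also takes a nontrivial solution, fixes a component with nonzero norm, observes that the index attached to that component satisfies the hypotheses of one of the two single-case theorems, and reruns the corresponding one-component estimate to get the contradiction $\nu<\nu$ or $\theta>\theta$. The decoupling you highlight---that each estimate uses only the data $\gamma_{ij}$, $H_{ij}$, $k_i$, $f_i$, $\alpha_{ij}$, $D_i$ of a single index---is exactly the point the paper's proof relies on.
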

\begin{proof}
Assume, on the contrary, that there exists $(u,v)\in K$ such that
$(u,v)=T(u,v)$ and $(u,v)\neq (0,0)$. Let, for example, be
$\|u\|\neq 0$. Then the functions $\gamma_{11}$, $\gamma_{12}$,
$H_{11}$, $H_{12}$ and $f_1$ satisfy either the assumptions in
Theorem\,\ref{nonexi1s} or the assumptions in Theorem\,\ref{noexi2
n} and the proof follows as in the previous Theorems.
\end{proof}
\subsection{Radial solutions of systems of elliptic PDEs in annular domains}
The problem of the existence of positive \emph{radial} solutions
of elliptic equations subject to
non-homogeneous and nonlinear BCs on annular domains, has been investigated,
via different methods, by a number of authors, we refer the reader
to~\cite{dolo0, dolo, dolo3, dunn-wang, lee, waya} and references
therein.  Nonlocal BCs have been also studied  in the the context
of elliptic problems, we mention the
papers~\cite{bisamarADAN69,  gipp-nodea, skuba1, skuba2, wangy,
webb}.

Here we focus on the systems of BVPs
\begin{gather}
\begin{aligned}\label{ellbvp-secapp}
\Delta u + \tilde{g}_1(|x|) f_1(u,v)=0,\ |&x|\in [R_1,R_0], \\
\Delta v + \tilde{g}_2(|x|) f_2(u,v)=0,\ |&x|\in [R_1,R_0],\\
\frac{\partial u}{\partial r}\Bigr\rvert_{\partial
B_{R_0}}=\tilde{H}_{11}[u,v]\ \text{and}\
(u(R_1 x)-\beta_1  &u(R_\eta x))\Big|_{x\in\partial B_{1}}=H_{12}[u,v],\\
\frac{\partial v}{\partial r}\Bigr\rvert_{\partial
B_{R_0}}=\tilde{H}_{21}[u,v] \ \text{and}\
\bigl(v(R_1 x)-\tilde{\beta}_2 & \frac{\partial v}{\partial r}(R_\xi x)\bigr)\Big|_{x\in\partial B_{1}}=H_{22}[u,v],%
\end{aligned}
\end{gather}
where $x\in \mathbb{R}^n $, $\beta_1,\,\tilde{\beta}_2<0$,
$0<R_1<R_0<+\infty$, $R_\eta, R_\xi \in (R_1,R_0)$, that can be seen as a generalization of system (7.1) studied in \cite{gipp-nodea} (the range of $\tilde{\beta}_2$  corrects the one of $\alpha_2$ in \cite{gipp-nodea}).

Consider  in $\mathbb{R}^n$, $n\ge 2$, the equation
\begin{equation}\label{eqell}
\triangle w+ \tilde{g}(|x|)f(w) = 0, \  \text{for a.e.}\  |x|\in
[R_{1},R_{0}].
\end{equation}
In order to establish the existence of radial solutions $w=w(r)$,
$r=|x|$, we proceed as in~\cite{lan, lan-lin-na, lanwebb} and we
rewrite \eqref{eqell} in the form
\begin{equation}\label{eqinterm}
w''(r) + \dfrac{n-1}{r}w'(r) + \tilde{g}(r)f(w(r))= 0
\quad\text{a.e. on } [R_{1}, R_{0}].
\end{equation}
Set
$w(t)=w(r(t))$, where, for $t\in[0,1]$,
\begin{equation*}
r(t):=\begin{cases}
R_0^{1-t}R_1^{t},\,\,\,\,\,\,\,\,\,\,\,\,\,\,\,\,\,\, n=2,\\
({R_{0}^{-(n-2)}}+({R_{1}^{-(n-2)}}-{R_{0}^{-(n-2)}})t)^{-1/(n-2)},\ &n\geq 3.
\end{cases}
\end{equation*}
Take, for $t\in[0,1]$,
\begin{equation*}
\phi(t):=\begin{cases}
r^2(t) \log^2(R_0/R_1),\,\,\,\,\,\,\,\,\,\,\,\,\,\,\,\,\,\, n=2,\\
\Bigl(\frac{R_{1}^{-(n-2)}-R_{0}^{-(n-2)}}{n-2}\Bigr)^2 \Bigl(R_{0}^{-(n-2)}+(R_{1}^{-(n-2)}-R_{0}^{-(n-2)})t \Bigr)^{\frac{-2(n-1)}{n-2}},\ n\geq 3,
\end{cases}
\end{equation*}
then the equation ~\eqref{eqinterm}
becomes
\begin{equation*}
w''(t) + {\phi}(t) \tilde{g}(r(t)) f(w(t)) = 0, \  \text{a.e. on}\
[0,1].
\end{equation*}
 Set
$u(t)=u(r(t))$ and
$v(t)=v(r(t))$. Thus, to the system \eqref{ellbvp-secapp} we associate the system of ODEs
\begin{gather}
\begin{aligned}\label{1syst}
u''(t) + g_1(t) f_1(t,u(t),v(t)) = 0, \quad \text{a.e. on } [0,1], \\
v''(t) + g_2(t) f_2(t,u(t),v(t)) = 0, \quad \text{a.e. on } [0,1],%
\end{aligned}
\end{gather}
with the BCs
\begin{gather}
\begin{aligned}\label{1BC}
u'(0)+H_{11}[u,v]=0,\ u(1)={\beta}_1u({\eta})+H_{12}[u,v], \; 0 < {\eta}<1, \\
v'(0)+H_{21}[u,v]=0,\ v(1)=\beta_2 v'(\xi)+H_{22}[u,v], \; 0 < {\xi}<1,
\end{aligned}
\end{gather}
where
\begin{equation*}
g_i(t):={\phi}(t) \tilde{g}_i(r(t)),\,\,\beta_2=\begin{cases}\frac{\tilde{\beta}_2}{\log\left(R_1/R_0\right)\, R_\xi},&n=2,\\
-\frac{\tilde{\beta}_2 (n-2)}{R_\xi}\,\frac{\,R_1^{n-2}+\xi(R_0^{n-2}-R_1^{n-2})}{
R_0^{n-2}-R_1^{n-2}},\,& n\geq 3,
\end{cases}
\end{equation*}
\begin{equation*}
H_{11}[u,v]=\begin{cases}R_0\,\log\left(R_0/R_1\right)\tilde{H}_{11}[u,v],&n=2, \\
\displaystyle\frac{1}{n-2}\,
R_0\left(-1+\left(R_0/R_1\right)^{n-2}\right)\tilde{H}_{11}[u,v],&n\geq3,
\end{cases}\end{equation*}
\begin{equation*}
H_{21}[u,v]=\begin{cases}R_0\,\log\left(R_0/R_1\right)\tilde{H}_{21}[u,v],&n=2,\\
\displaystyle\frac{1}{n-2}\,
R_0\left(-1+\left(R_0/R_1\right)^{n-2}\right)\tilde{H}_{21}[u,v],&n\geq3,
\end{cases}\end{equation*}
and $\xi, \eta \in (0,1)$ are such that $r(\eta)=R_{\eta}$ and
$r(\xi)=R_{\xi}$.

Here we focus on the case $\beta_1<0$,
$0<\beta_2<1-\xi$, that leads to the case of solutions that are
\emph{positive} on some sub-intervals of $ [0,1]$ and are allowed
to change sign elsewhere.

We  study the existence of solutions of the
system~\eqref{1syst}-\eqref{1BC} by means of the  system
\begin{gather}
\begin{aligned}\label{3syst}
u(t)=&\Bigl(-t+\frac{1-\beta_1\eta}{1-\beta_1}\Bigr) H_{11}[u,v]+\frac{1}{1-\beta_1}H_{12}[u,v]+\int_{0}^{1}k_1(t,s)g_1(s)f_1(s,u(s),v(s))\,ds, \\
v(t)=&(-t+1-\beta_2)H_{21}[u,v]+H_{22}[u,v]+\int_{0}^{1}k_2(t,s)g_2(s)f_2(s,u(s),v(s))\,ds,%
\end{aligned}
\end{gather}
where the Green's functions are given by
\begin{equation*} 
k_1(t,s)=\dfrac{1}{1-\beta_1}(1-s)-\begin{cases}
\dfrac{\beta_1}{1-\beta_1}(\eta -s), &  s \le \eta,\\ \quad 0,&
s>\eta,
\end{cases}
 - \begin{cases} t-s, &s\le t, \\ \quad 0,&s>t,
\end{cases}
\end{equation*}
and
\begin{equation*}
k_2(t,s)=(1-s)-\begin{cases} \beta_2, &  s \le \xi,\\ \quad 0,& s>\xi,
\end{cases}
 - \begin{cases} t-s, &s\le t, \\ \quad 0,&s>t.
\end{cases}
\end{equation*}
The Green's function $k_1$ has been studied in \cite{gijwjiea},
where it was shown that we may take $$\Phi_1(s)= 1-s,$$ arbitrary
$[a_1,b_1]\subset[0,\eta]$ and
$\tilde{c}_1={(1-\eta)}/{(1-\beta_1)}$.

Regarding $k_2$, this has been studied in \cite{giems}; we may take
$$
\Phi_2(s)=1-s,
$$
arbitrary $[a_2,b_2]\subset [0,\xi]$ and $ \tilde{c}_2=1-\beta_2-\xi$.

By direct calculation we obtain $$||\gamma_{11}||=\frac{1-\beta_1\eta}{1-\beta_1},\,\,
c_{11}=\frac{1-\eta}{1-\beta_1\eta},\,\,
||\gamma_{12}||=\frac{1}{1-\beta_1},\,\, c_{12}=1,$$$$
||\gamma_{21}||=1-\beta_2,\,\,
c_{21}=\frac{1-\beta_2-\xi}{1-\beta_2},\, \,||\gamma_{22}||=1,\,\,
c_{22}=1.$$ Thus we work in the cone
\begin{equation*}
K=\{u\in C[0,1]:\,\,\min_{t\in [0,\eta]}u(t)\geq {c_{1}}\|
u\|\}\times \{v\in C[0,1]:\,\,\min_{t\in [0,\xi]}v(t)\geq {c_{2}}\|
v\|\}
\end{equation*}
where
 $c_{1}=\min \{\tilde{c}_{1},c_{11},c_{12}\}=\frac{1-\eta}{1-\beta_1}$ and $c_{2}=\min \{\tilde{c}_{2},c_{21},c_{22}\}=1-\beta_2-\xi$.

The results of the previous Subsections can be applied to the system~\eqref{3syst}, yielding results for the system \eqref{ellbvp-secapp}, we refer to \cite{lan-lin-na, lanwebb} for the results that may be stated.

We conclude by showing in the following example that all the
constants that occur in Theorem~\ref{mult-sys} can be computed.
\begin{ex}
Consider  in $\mathbb{R}^2$  the system of BVPs
\begin{gather}
\begin{aligned}\label{ellbvpex}
\Delta u + \frac{1}{4}(|u|^3+|v|^3+1)=&0,\ |x|\in [1,e], \\
\Delta v + \frac{1}{8}(|u|^{\frac{1}{2}}+v^2+1)=&0,\ |x|\in [1,e],\\
\frac{\partial u}{\partial r}\bigr\rvert_{\partial B_{e}}=0,\quad
(u(x)+  u(\sqrt{e} x))|_{x\in\partial B_{1}}=&\bigl(\frac{3}{40} u^2(\sqrt[6]{e^5} x)+\sqrt{\frac{3}{40}} v^3(\sqrt[5]{e^4} x)\bigr)|_{x\in\partial B_{1}},\\
\frac{\partial v}{\partial r}\bigr\rvert_{\partial B_{e}}=0, \quad
\bigl(v(x)+\frac{\sqrt[4]{e^3}}{4} & \frac{\partial v}{\partial r}(\sqrt[4]{e^3} x)\bigr)|_{x\in\partial B_{1}}=0.%
\end{aligned}
\end{gather}
The system~\eqref{ellbvpex} can be seen as a perturbation of the system (7.5) in~\cite{gipp-nodea} and also corrects the misprints in the BCs therein.
To the system~\eqref{ellbvpex} we associate the system of second order ODEs
\begin{align*}
&u''(t)+ \frac{1}{4} e^{2(1-t)} (|u(t)|^3+|v(t)|^3+1)=0,\ t\in [0,1], \\
&v''(t) +\frac{1}{8}  e^{2(1-t)} (|u(t)|^{\frac{1}{2}}+v^2(t)+1)=0,\ t\in [0,1],\\
&u'(0)=0,\,\,\,\,\,\; u(1/2)+u(1)=\dfrac{3}{40} u^2(1/6)+\sqrt{\dfrac{3}{40}} v^3(1/5),\\
&v'(0)=0,\;\,\,\,\,\, v'(1/4)=4v(1).%
\end{align*}

Since in  $[0,1]\times  [0,1]$ the kernel $k_1$ is not positive for $$\dfrac{1-\beta_1 \eta}{1-\beta_1}\leq t\leq1\ \text{and}\ 0\leq s \leq \dfrac{1-\beta_1}{-\beta_1}t+\dfrac{1}{\beta_1},$$
we have
\begin{align*}
 \frac{1}{m_{1}}=\sup_{t\in [0,1]}\int_{0}^{1}\vert k_{1}(t,s)\vert g_{1}(s)\,ds
 &=\max \Biggl\{
 \sup_{t\in [0,3/4]}
 \Bigl\{\frac {e^2}{2}\Bigl(-\frac{e^{-2t}}{2}-t+\frac{3+e^{-1}+e^{-2}}{4}\Bigr) \Bigr\} ,\\
 & \sup_{t\in [3/4,1]}
 \Bigl\{\frac {e^2}{8}\Bigl(2 e^2 e^{-4t}-2e^{-2t}+4t-3-e^{-1}\Bigr) +\frac{1}{8}\Bigr\}\Biggr\}.
 \end{align*}{}
Note that the kernel $k_2$ in  $[0,1]\times  [0,1$] is not positive for
$$1-\beta_2\leq t\leq1\ \text{and}\ 0\leq s \leq \xi;$$
we have
\begin{multline*}
 \frac{1}{m_{2}}=\sup_{t\in [0,1]}\int_{0}^{1}\vert k_{2}(t,s)\vert g_{2}(s)\,ds
 =\max \Biggl\{
 \sup_{t\in [0,3/4]}
 \Bigl\{\frac {e^2}{2}\Bigl(-\frac{e^{-2t}}{2}-t+\frac{3+e^{-\frac{1}{2}}}{4}\Bigr) +\frac{1}{4}\Bigr\} ,\\
  \sup_{t\in [3/4,1]}   \Bigl\{\frac {e^2}{2}\Bigr(-\frac{e^{-2t}}{2}-t(\frac{2}{\sqrt{e}}-1)+\frac{-3+7e^{-\frac{1}{2}}}{4}\Bigr) +\frac{1}{4}\Bigr\}\Biggr\} .
 \end{multline*}{}
We fix $[a_1,b_1]=[a_2,b_2]=[0,1/4]$, obtaining
$$
 \frac{1}{M_{1}} = \inf_{t\in
[0,1/4]}\int_{0}^{1/4} k_1(t,s) g_1(s)\,ds=\inf_{t\in [0,1/4]}\Bigl\{\frac {e^2}{8}\Bigl(-2e^{-2t}-4t+3\Bigr) \Bigr\} ,
$$
and
$$
 \frac{1}{M_{2}} = \inf_{t\in
[0,1/4]}\int_{0}^{1/4} k_2(t,s) g_2(s)\,ds=\inf_{t\in [0,1/4]}\Bigl\{ \frac {e^2}{2}\Bigl(-\frac{e^{-2t}}{2}-t+\frac{3+e^{-\frac{1}{2}}}{4}\Bigr) +\frac{1}{4} \Bigr\}.
$$
By direct computation, we get
$$
c_1=\frac{1}{4},\, m_1=0.72,\, M_1=2.16,\, c_2=\frac{1}{4},\, m_2=0.577,\, M_2=1.376.
$$
With the choice of
$$
\rho_1=\rho_2=1/12,\,\,\alpha^{\rho_1,\rho_2}_{121}[u]=\alpha^{\rho_1,\rho_2}_{122}[v]=0,
$$
$$
r_1=1,\,\, r_2=1/2,\,\, \alpha^{r_1,r_2}_{121}[u]=0.3\,
u(1/6),\,\, \alpha^{r_1,r_2}_{122}[v]=0.3\,
v(1/5),
$$
$$
s_1=5,\,\,s_2=11, \alpha^{s_1,s_2}_{121}[u]=1.5\,
u(1/6),\,\, \alpha^{s_1,s_2}_{122}[v]=1.5\, v(1/5),
$$
we obtain
$$
\alpha^{r_1,r_2}_{121}[\gamma_{12}]=\alpha^{r_1,r_2}_{122}[\gamma_{12}]=0.3\,\cdot \frac{1}{2}<1,\,\,\,\\
\,\,\,\,\alpha^{s_1,s_2}_{121}[\gamma_{12}]=\alpha^{s_1,s_2}_{122}[\gamma_{12}]=1.5\,\cdot \frac{1}{2}<1,
$$
\begin{align*}
H_{12}[u,v]=&\dfrac{3}{40} u^2(1/6)+\sqrt{\dfrac{3}{40}}
v^3(1/5)\leq
\alpha^{r_1,r_2}_{121}[u]+\alpha^{r_1,r_2}_{122}[v],\,\,\,
(u,v)\in \partial K_{r_1,r_2}\, \\
H_{12}[u,v]=&\dfrac{3}{40} u^2(1/6)+\sqrt{\dfrac{3}{40}}
v^3(1/5)\geq
\alpha^{s_1,s_2}_{121}[u]+\alpha^{s_1,s_2}_{122}[v],\,\,\,
(u,v)\in \partial V_{s_1,s_2}\,
\end{align*}
\begin{align*}
\inf & \Bigl\{ f_2(u,v):\; (u,v)\in [-4\rho_1,4\rho_1]\times[0,4\rho_2]
\Bigr\}= f_2(0,0)=0.125>1.376\cdot\frac{1}{12}, \\
\sup &\Bigl\{ f_1(u,v):\; (u,v)\in [-r_1, r_1]\times[-r_2,
r_2]\Bigr\}=f_1\left(1,\frac{1}{2}\right)=0.531
< 0.579  , \\
\sup &\Bigl\{ f_2(u,v):\; (u,v)\in [-r_1, r_1]\times[-r_2,
r_2]\Bigr\}=f_2\left(1,\frac{1}{2}\right)=0.281
<0.577\cdot\frac{1}{2}, \\
\inf &\Bigl\{ f_1(u,v):\; (u,v)\in
[s_1,4s_1]\times[-4s_2,4s_2]\Bigr\}=f_1(5,0)=31.5
>0.416  \cdot 5, \\
\inf &\Bigl\{ f_2(u,v):\; (u,v)\in [-4s_1,4s_1]\times[s_2,
4s_2]\Bigr\}=f_2(0,11)=15.25 >1.376 \cdot 11.
\end{align*}
Thus the conditions $(\mathrm{I}^{0}_{\rho_{1},\rho_2})^{\diamond}$, $(\mathrm{I}%
^{1}_{r_1,r_{2}})$ and $(\mathrm{I}^{0}_{s_1,s_{2}})$ are satisfied; therefore
the system~\eqref{ellbvpex} has at least two nontrivial solutions.
\end{ex}

\section*{Acknowledgments}
G. Infante and P. Pietramala were partially supported by G.N.A.M.P.A. - INdAM (Italy). F. Cianciaruso is a member of  G.N.A.M.P.A.

\end{document}